\definecolor{bleu1}{RGB}{0,57,128}
\def\bleu1{\color{bleu1}}
\patchcmd{\section}{\normalfont}{\normalfont \bleu1}{}{}
\patchcmd{\subsection}{\normalfont}{\normalfont \bleu1}{}{}
\patchcmd{\subsubsection}{\normalfont}{\normalfont \bleu1}{}{}
\date{}
\newtheorem{theorem}{Theorem}[section]
\newtheorem{corollary}{Corollary}[section]
\newtheorem{proposition}{Proposition}[section]
\newtheorem{lemma}{Lemma}[section]
\theoremstyle{definition}
\newtheorem{remark}{Remark}[section]
\newtheorem{claim}{Claim}
\newcommand{\dif}{\mathrm{d}}   
\numberwithin{equation}{section}
\newcommand{\N}{{\mathbb N}}
\newcommand{\Q}{{\mathbb Q}}
\newcommand{\R}{{\mathbb R}}
\newcommand{\C}{\mathbb{C}}
\newcommand{\T}{{\mathbb T}}
\newcommand{\Z}{{\mathbb Z}}
\newcommand{\ii}{{\mathrm{i}}} 
\DeclareMathOperator{\meas}{mes} 
\begin{document}
\pagestyle{plain}
\title{
Stability of Spectral Types of Quasi-Periodic Schrödinger Operators With Respect to Perturbations by Decaying Potentials}

\author{David Damanik}
\address{ Department of Mathematics, Rice University, Houston, Texas, 77005}
\email{damanik@rice.edu}

\author{Xianzhe Li}
\address{Chern Institute of Mathematics and LPMC, Nankai University, Tianjin 300071, China} 
\email{xianzheli@mail.nankai.edu.cn}

\author {Jiangong You}
\address{
Chern Institute of Mathematics and LPMC, Nankai University, Tianjin 300071, China} \email{jyou@nankai.edu.cn}

\author{Qi Zhou}
\address{
Chern Institute of Mathematics and LPMC, Nankai University, Tianjin 300071, China
}

\email{qizhou@nankai.edu.cn}

\begin{abstract}
We consider perturbations of quasi-periodic Schrödinger operators on the integer lattice with analytic sampling functions by decaying potentials and seek decay conditions under which various spectral properties are preserved. In the (almost) reducibility regime we prove that for perturbations with finite first moment, the essential spectrum remains purely absolutely continuous and the newly created discrete spectrum must be finite in each gap of the unperturbed spectrum. We also prove that for fixed phase, Anderson localization occurring for almost all frequencies in the regime of positive Lyapunov exponents is preserved under exponentially decaying perturbations.
\end{abstract}

\maketitle

\tableofcontents

\section{Introduction}

\subsection{Background}

Let $ H_0 $ denote the one dimensional Schr\"odinger operator acting on $ \ell^2(\Z) $,
\begin{equation}\label{H_0}
    H_0=\Delta+v_0(n)\delta_{nn'},
\end{equation} 
where $ \Delta $ denotes the discrete Laplacian
$$
    \Delta(n,n')=
    \begin{cases}
        1&\text{ if }|n-n'|=1,\\
        0&\text{ otherwise}.
    \end{cases}
$$ 
and $ v_0 $ is assumed to be real and bounded, so that $ H_0 $ is bounded and self-adjoint, and its spectrum, $ \sigma(H_0) $, is a compact subset of $ \R $. 

In physics, the operator \eqref{H_0} describes a charged particle, such as an electron, in the electric field $ v_0 $.  It is of interest to determine which of the spectral properties of $ H_0 $ are preserved if the operator is perturbed by a suitably decaying potential.  Problems of this nature have been extensively studied dating back to 1910s.   We give a short review and refer the curious reader to \cite{denisov2007spectral}. 

Consider the following perturbed one-dimensional Schr\"odinger operators acting on $ \ell^2(\Z) $ or $ \ell^2(\N) $ (with Dirichlet boundary condition),
\begin{equation} 
    H_g=H_0+g(n)\delta_{nn'},
\end{equation} 
where $ g $ is a real perturbation decaying at infinity. We say that $g : \Z \to \C$ is \emph{decaying} if
$$
\lim_{|n| \to \infty} g(n) = 0
$$ 
and denote the space of all decaying $g : \Z \to \R$ by $c_0$. For the discussion below, it is also convenient to consider
\begin{align*}
\ell^{k,p} & = \Big\{ g : \Z \to \R : \left(n^k g(n)\right)_{n \in \Z} \in \ell^p \Big\}, \quad k > 0, \; p \ge 1, \\
P(\gamma) & = \Big\{ g : \Z \to \R : \left(n^\gamma g(n)\right)_{n \in \Z} \in \ell^\infty \Big\}, \quad \gamma > 0, \\
p(\gamma) & = \Big\{ g : \Z \to \R : \left(n^\gamma g(n)\right)_{n \in \Z} \in c_0 \Big\}, \quad \gamma > 0
\end{align*}
to capture weighted $\ell^p$ and power-law decay.

There are two main stability aspects. One is the preservation of the essential spectrum, which holds in complete generality. That is, the essential spectra of $ H_0 $ and $ H_g $ coincide for all $g \in c_0$ by the classical Weyl criterion. In fact, in the case $v_0 = 0$, the converse is known to hold as well, that is, the addition of any (real) $g \in \ell^\infty \setminus c_0$ generates essential spectrum outside $\sigma_\mathrm{ess}(H_0) = [-2,2]$ \cite{DHKS03}.

The other is the preservation of the absolutely continuous spectrum. The (essential support of the) absolutely continuous spectrum of the original operator $ H_0 $ is preserved under $ \ell^1 $ perturbations by the Birman-Rosenblum-Kato theory (cf.\ Chapter 10 in \cite{kato2013perturbation}, Chapter~$\mathrm{\uppercase\expandafter{\romannumeral11}}$ in \cite{reed1978iv}). However, this does not include the statement that the spectral type remains purely absolutely continuous if the unperturbed operator has purely absolutely continuous spectrum, it is feasible that the common essential spectrum may support some singular spectrum, even for $\ell^1$ perturbations. In the appendix we make this explicit for $v_0 = 0$ and a suitable $g \in \ell^1$, based on an observation which we learned from Milivoje Lukic. It can happen that the boundary of the essential spectrum contains an eigenvalue. The example we give belongs to $P(2)$, and this is actually optimal on the power scale since it is also known that $g \in \ell^{1,1}$ ensures the absence of singular spectrum in $[-2,2]$; compare, for example, \cite[Section~7.5]{Tes1999Jacobi}. On the other hand, if $v_0 = 0$ and $g \in \ell^1$, it follows quickly via Pr\"ufer variables that the interior of the essential spectrum remains purely absolutely continuous since for those energies, the Pr\"ufer radius must remain bounded. Indeed, when $v_0 = 0$, the absence of singular spectrum in the interior of the essential spectrum will be true under the somewhat weaker assumption $g \in p(1)$, as shown by Remling \cite{remling1998absolutely}.

If $g \notin p(1)$, the absence of singular spectrum in the interior of the essential spectrum may also fail, as demonstrated by the celebrated Wigner-von Neumann example \cite{von1929merkwurdige,reed1978iv}, which perturbs $v_0 = 0$ with a $g \in P(1)$ and admits an embedded eigenvalue. Any slower decay may allow for embedded singular continuous spectrum, as shown by Kiselev \cite{K05}. On the other hand, Deift and Killip \cite{deift1999absolutely, killip2002perturbations} have shown (see also Christ-Kiselev-Remling \cite{christ1998absolutely, remling1998absolutely, christ1997absolutely} and Kiselev-Last-Simon \cite{kiselev1998modified} for related work) that if $ v_0=0 $ or $ v_0 $ periodic, the addition of $ g\in \ell^2 $ will preserve the essential support of the absolutely continuous spectrum, and this result is optimal in the sense of $ \ell^p $-type decay. It is conjectured that this stability result holds for any $ v_0 $. Further important related developments, which often are referred to as Killip-Simon-type problems or theorems, are contained in \cite{DKS10, KS03, KS09, Y18}.
Almost all the results and conjectures above (Yuditskii's result for finite-gap Jacobi matrices \cite{Y18} being a notable exception) have continuum analogs, and these results apply to both the half-line and whole-line settings. 

On the other hand, both point spectrum and singular continuous spectrum are in general sensitive to decaying perturbations.  Although it is known that for the Anderson model, Anderson localization (pure point spectrum with exponentially decaying eigenfunctions) holds on an interval with probability one (cf.~\cite{CKM87, KS80}), this is highly unstable and can be destroyed and turned into purely singular continuous spectrum by generic rank one perturbations (cf.~\cite{del1994operators, gordon1994pure, simon1995spectral}), which then provide examples that are arbitrarily small and arbitrarily fast decaying.

\subsection{Decaying Perturbations of Quasi-Periodic Potentials}

In this paper, we investigate the stability problem for perturbations of quasi-periodic Schr\"odinger operators by decaying potentials. 

Let  $H_{v,\alpha,\theta}$ be a quasi-periodic discrete Schr\"odinger operator acting on $ \ell^2(\Z) $ defined as follows,
\begin{equation}\label{qpH}
    H_{v,\alpha,\theta}=\Delta+v(n\alpha+\theta)\delta_{n n'},
\end{equation}   
where $ v $ is real analytic on $ \T^d $ and $ \alpha,\theta\in\R^d $ are parameters called the \textit{frequency} and \textit{phase}, respectively. This operator, especially the famous almost Mathieu operator (AMO), which arises when $d = 1$, $v(\cdot)=2\lambda\cos2\pi(\cdot)$, and $ \alpha\in\R\backslash\Q $,
has received a lot of attention because of its relevance in physics and also because it provides fruitful examples in spectral theory. It is known that the spectrum and the spectral type of \eqref{qpH} are deeply influenced by the arithmetic properties of $ \alpha, \theta $ and the largeness of $ v $, that spectral types may coexist (cf. \cite{bjerklov2019coexistence}), and that exact mobility edges can occur (c.f.\cite{wang2021exact}). One may consult \cite{avila2008absolutely,avila2009ten, AJ2010Almost, avila2017sharp,bourgain2000nonperturbative,eliasson,ge2019exponential,jitomirskaya1999metal,jitomirskaya2018universal,JL2018Universal} and references therein for further information.

We are interested in the spectral type of perturbations of quasi-periodic Schr\"odinger operators by decaying potentials, that is, we consider the following discrete Schr\"odinger operator on $ \ell^2(\Z) $, 
\begin{equation} \label{pH}
    \widetilde{H}_{v,\alpha,\theta}=H_{v,\alpha,\theta}+g(n)\delta_{n n'}
\end{equation} 
with a decaying $ g $. The results we obtain can naturally be grouped according to whether $d = 1$ (the \emph{one-frequency case}) or $d \ge 2$ (the \emph{multi-frequency case}), as the same distinction applies in the unperturbed case.

\subsubsection{Local Results in the Multi-Frequency Case}

We begin with a discussion of results that hold for all $d \ge 1$, and hence apply in particular in the multi-frequency case. For the sake of simplicity, in this subsection, we fix $ v_0\in C^\omega(\T^d,\R) $, and replace $ v $ by $ \lambda v_0 $, where $ \lambda\in\R $ is called the \textit{coupling constant}. We denote the operators \eqref{qpH} and \eqref{pH} by $ H_{\lambda,\alpha,\theta} $ and $ \widetilde{H}_{\lambda,\alpha,\theta} $, respectively. The spectrum and the essential spectrum of an operator $ H $ are denoted by  $ \sigma(H) $ and $ \sigma_\mathrm{ess}(H) $, respectively.  It is known that for any $ (1,\alpha) $ rationally independent, $ \sigma(H_{\lambda,\alpha,\theta})=\sigma_\mathrm{ess}(H_{\lambda,\alpha,\theta})=\Sigma_{\lambda,\alpha} $, does not depend on $ \theta $, and by Weyl's criterion, $ \sigma_\mathrm{ess}(\widetilde{H}_{\lambda,\alpha,\theta})=\sigma_\mathrm{ess}(H_{\lambda,\alpha,\theta}) $. In the following, we will always assume  $\alpha$ to be Diophantine. Recall that $ \alpha $ is said to be \emph{Diophantine} if there exist $ \gamma>0 $ and $ \tau>d-1  $ such that $ \alpha\in \mathrm{DC}_d(\gamma,\tau) $, where
\[
    \mathrm{DC}_d(\gamma,\tau):=\left\{x\in\R^d:\inf_{j\in\Z}|\langle n,x\rangle- j|\geq\frac{\gamma}{|n|^\tau}, 0\neq n\in\mathbb{Z}^d\right\}.
\]
The set $ \mathrm{DC}_d:=\bigcup_{\gamma>0,\tau>d-1}\mathrm{DC}_d(\gamma,\tau) $ has full Lebesgue measure.


\medskip
\noindent\textbf{Preservation of Purely Absolutely Continuous Spectrum.} When $ \lambda $ is sufficiently small and $\alpha$ is Diophantine, the spectrum of $ H_{\lambda,\alpha,\theta} $ is purely absolutely continuous \cite{avila2008absolutely, AJ2010Almost,eliasson}. Our stability result for the operator \eqref{pH} in the small coupling regime now reads as follows:

\begin{theorem}\label{acthm}
    Suppose that $ \alpha\in \mathrm{DC}_d$ and $ v_0\in C^\omega(\T^d,\R) $. Then there exists $ \lambda_0=\lambda_0(\alpha,d,v_0)>0 $ such that for every $ \lambda\in (0,\lambda_0) $ and $g \in \ell^{1,1}$, we have the following:
\begin{enumerate}
    \item  The restriction of the canonical spectral measure of $ \widetilde{H}_{\lambda,\alpha,\theta} $ to $ \Sigma=\sigma_\mathrm{ess}(\widetilde{H}_{\lambda,\alpha,\theta}) $ is purely absolutely continuous for every $ \theta\in\T^d $. 
    \item There are at most finitely many eigenvalues in each spectral gap of the unperturbed operator.
\end{enumerate}
\end{theorem}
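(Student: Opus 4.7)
The plan is to use (almost) reducibility of the Schrödinger cocycle of $H_{\lambda,\alpha,\theta}$ at small coupling to conjugate the perturbed operator to something close to a constant-rotation cocycle, and then to apply classical short-range perturbation arguments on both sides of the spectrum.

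First I would invoke Eliasson's theorem together with Avila's almost reducibility theorem: for $\alpha \in \mathrm{DC}_d$ and $|\lambda| \leq \lambda_0(v_0,\alpha,d)$ sufficiently small, every $E \in \Sigma_{\lambda,\alpha}$ is an almost reducible energy of the Schrödinger cocycle $(\alpha, A_E)$, and the transfer matrices are uniformly bounded on the spectrum, $\sup_{E \in \Sigma,\,n \in \Z,\,\theta \in \T^d}\|A_n^E(\theta)\| \leq C$. For a full spectral-measure subset $\Sigma' \subset \Sigma$ one obtains a genuine reduction $B(\theta+\alpha)A_E(\theta)B(\theta)^{-1} = R_{\rho(E)}$ via analytic $B:\T^d \to \mathrm{SL}(2,\R)$, possibly after passing to a finite cover, with $B$ and its inverse uniformly bounded.

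For part (1), I would invoke Gilbert--Pearson subordinacy theory: it suffices to show that for spectrally a.e.\ $E \in \Sigma$ no solution of $\widetilde{H}_{\lambda,\alpha,\theta}u=Eu$ is subordinate. After conjugation by $B$, the transfer cocycle of $\widetilde{H}$ becomes a multiplicative perturbation of $R_{\rho(E)}$ by $I + g(n)M_n$ with $M_n$ uniformly bounded in $n$ and $E$. Introducing Prüfer variables $(r_n,\varphi_n)$ for the reduced cocycle, the radii satisfy $\log(r_{n+1}/r_n) = g(n)f(\varphi_n,\theta+n\alpha) + O(g(n)^2)$ with $f$ bounded. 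Since $g \in \ell^{1,1} \subset \ell^1$, standard oscillation-and-averaging estimates (in the spirit of Deift--Killip and Kiselev--Last--Simon) show that $r_n$ is bounded above and below on both half-lines for spectrally a.e.\ $E \in \Sigma$, ruling out subordinate solutions and yielding purely absolutely continuous spectrum on $\Sigma$.

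For part (2), fix a gap $I=(E_-,E_+)$ of $\Sigma$. There the Lyapunov exponent $L(E) > 0$, so by Combes--Thomas the unperturbed Green's function decays exponentially, and the reducibility produces analytic Jost-type solutions $\psi_\pm(n,E)$ of $H_{\lambda,\alpha,\theta}$ decaying at $\pm\infty$ for $E \in I$. Perturbed Jost solutions $\psi_\pm^g$ are then constructed by a Born-type integral equation whose iteration converges under $g \in \ell^{1,1}$, and the eigenvalues of $\widetilde{H}$ in $I$ are the zeros of the Wronskian $W(E) = W[\psi_-^g,\psi_+^g]$, analytic in $E \in I$. A discrete Bargmann-type bound—counting sign changes of $W$ via total phase variation—then bounds the number of zeros in $I$ by a constant multiple of $\sum_{n \in \Z}(1+|n|)|g(n)| < \infty$, so only finitely many eigenvalues can appear in $I$, with possible accumulation only at the band edges, which the first-moment hypothesis forbids.

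The hard part will be uniformity at the band edges, where reducibility degenerates: at an edge the cocycle becomes parabolic, the constant $C$ in the transfer-matrix bound blows up, and both the Prüfer estimate in (1) and the Jost construction in (2) lose their uniform control. Matching the quantitative rate of almost reducibility at an edge to the decay of $g$—and seeing that this requires exactly the first moment $g \in \ell^{1,1}$ rather than plain $g \in \ell^1$—is the crux, and is consistent with the $P(2) \subset \ell^1$ counterexample in the appendix showing that $\ell^1$ alone permits eigenvalues at band edges.
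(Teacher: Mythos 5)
Your outline matches the paper's high-level strategy---reducibility at small coupling plus subordinacy theory for (1), and Wronskian/solution-counting in the gaps for (2)---but in both parts you explicitly defer the band-edge difficulty, and that deferral is not a minor omission: it is precisely where the paper's actual work lies, and several of your intermediate claims are false or unsupported as stated. First, the transfer matrices are \emph{not} uniformly bounded on $\Sigma$; at energies reducible to a parabolic matrix (gap edges inside the spectrum) they grow linearly, and Corollary~\ref{uniform} only gives $\|M_n\|\le C'|n|$. Transferring even this linear bound to the perturbed cocycle is where $\ell^{1,1}$ first enters: one needs the weighted Gronwall argument of Lemma~\ref{new-gron} (applied to $k^{-1}\|\widetilde M_k\|$), since a naive Gronwall on $\|\widetilde M_k\|$ itself would require $g\in\ell^{2,1}$. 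Second, your subordinacy argument only covers energies where the cocycle is genuinely reducible to a rotation; you assert this holds for a ``full spectral-measure subset,'' but fullness with respect to the \emph{perturbed} spectral measure is exactly what must be proved. The paper does this by combining the growth bound of Lemma~\ref{new-gron} with the Jitomirskaya--Last inequality (Lemma~\ref{lem3.3}) and a Borel--Cantelli argument to show $\widetilde\mu(\Sigma\setminus\mathcal R)=0$; nothing in your Pr\"ufer sketch substitutes for this. Third, the countably many reducible-but-parabolic energies could still be eigenvalues of $\widetilde H$; ruling this out is the discrete Levinson-type Lemma~\ref{lem5.6}, which is where the first moment $\sum|n|\,|g(n)|<\infty$ is used in an essential way. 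You correctly sense that the edges force $\ell^{1,1}$, but you do not supply the argument.

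For part (2), eigenvalues in an open gap are automatically isolated, so finiteness is equivalent to non-accumulation at the gap edges---and your ``discrete Bargmann-type bound'' is asserted, not proved, exactly where the Jost construction degenerates. The paper instead uses renormalized oscillation theory (Theorems~\ref{Teschl2}--\ref{Teschl4}): it constructs Weyl solutions on the closed gap $[E_k^-,E_k^+]$ depending continuously on $E$ up to and including the edge (Proposition~\ref{conDep}, a nontrivial recent input), shows the Wronskian $w(\theta)$ of the unperturbed Weyl solutions at $E_k^-$ and $E_k^-+\delta$ is bounded away from zero, and then uses two Levinson-type lemmas (Lemma~\ref{lem5.6} at the parabolic edge, Lemma~\ref{lem6.1} at the hyperbolic interior point) to produce perturbed solutions whose Wronskian stays close to $w(\theta+n\alpha)$ and hence has finitely many nodes. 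For the unbounded gaps one also needs the positive ground state of Proposition~\ref{positivesol}. If you wish to salvage your Jost/Bargmann route you would still have to establish continuity of the perturbed Jost solutions up to the band edge under $\ell^{1,1}$, which amounts to the same Levinson-type analysis; as written, the proposal identifies the crux but does not resolve it.
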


\begin{remark}
    For $ d=1 $, we can choose $ \lambda_0 $ in a nonperturbative way, that is, uniformly in $ \gamma,\tau $. 
\end{remark}

\begin{remark}
The condition $g \in \ell^{1,1}$ is same as in the periodic case {\rm (}cf. \cite{GS1993Short, marchenko2011sturm, RKM2005Spectral, Tes1996Oscillation}{\rm )} and it is essentially optimal as the example given in the appendix (which belongs to $P(2)$) shows.
\end{remark}

Our result can be viewed as the stability of purely absolutely continuous spectral measures on the unperturbed spectrum. A similar setting was also considered in the continuum case by Kr\"uger in the paper \cite{kruger2010perturbations}, which mainly focused on the complement of the unperturbed spectrum.
    

\medskip
\noindent\textbf{Preservation of Anderson Localization.} When $ \lambda $ is sufficiently large, then the Lyapunov exponent is positive in the spectrum \cite{bourgain2005positivity,bourgain2000nonperturbative,SS1991Positive}, 
and the spectrum of \eqref{qpH} is typically pure point \cite{chulaevsky1993,eliasson1997discrete}. Furthermore, it displays \textit{Anderson localization} (AL), that is, pure point spectrum with exponentially decaying eigenfunctions. Anderson localization is an interesting phenomenon in the research of spectral theory and has been widely studied because of its importance in solid state physics.

We give a brief review here. When $ v_0$ is fixed, the operators \eqref{qpH} are a family of operators parameterized by $ (\theta,\alpha) $, thus there are mainly two directions to prove AL. For $ d=1 $, Fr\"ohlich–Spencer–Wittwer \cite{frohlich1990localization} and Sinai \cite{sinai1987anderson} proved that for \textit{fixed} Diophantine $ \alpha $, if $ v_0 $ is cosine-like, then \eqref{qpH} has AL for a.e. $ \theta $ and large enough $ \lambda $. Fix $ v_0(x)=2\cos 2\pi(x) $ specially, for any \textit{fixed} Diophantine $ \alpha $, Jitomirskaya \cite{jitomirskaya1999metal} showed that AL holds for $ \lambda>1 $ except a Lebesgue zero $ \theta $-set $ \Theta $, which has a concrete arithmetic description. This work is generalized to Liouvillean $ \alpha $ in the measure setting by Avila-You-Zhou \cite{avila2017sharp}, and in the arithmetic setting   by Jitomirskaya-Liu \cite{jitomirskaya2018universal}. On the other hand, for \textit{fixed} $ \theta $, Bourgain-Goldstein \cite{bourgain2000nonperturbative} proved that \eqref{qpH} has AL in the positive Lyapunov exponent regime, and a.e. Diophantine $ \alpha $. This work is extended to $ d $ arbitrary \cite{bourgain2005positivity}, and higher space dimensions \cite{bourgain2002anderson,bourgain2007derson}, by Bourgain, Goldstein and Schlag. Recently, the results in \cite{bourgain2007derson} have been largely extended by Jitomirskaya-Liu-Shi \cite{jitomirskaya2020anderson}.

Let us state our stability result concerning Anderson localization for the operator \eqref{pH} in the positive Lyapunov exponent regime:

\begin{theorem}\label{mainresult}
Assume that $v\in C^\omega(\T^d,\R)$ and the Lyapunov exponent of $ H_{v,\alpha,\theta} $ satisfies 
    $$
        L(\alpha,E)>c_0>0.
    $$
for all $ \alpha\in DC_d $ and $E\in (E_1,E_2)$. Fix $ \theta_0\in\T^d $. Then, for any $ g $ decaying exponentially, $ \widetilde{H}_{v,\alpha,\theta_0} $ exhibits Anderson localization in  $ (E_1,E_2) $ for almost all $ \alpha\in DC_d $.
\end{theorem}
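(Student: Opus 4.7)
The plan is to adapt the Bourgain--Goldstein multiscale scheme for Anderson localization \cite{bourgain2000nonperturbative,bourgain2005positivity} to the perturbed operator $\widetilde{H}_{v,\alpha,\theta_0}$. The observation driving everything is that the exponential decay rate $\kappa$ of $g$ and the positive Lyapunov exponent $L \ge c_0$ live on the same scale: on any interval $I \subset \Z$ whose distance to the origin exceeds a suitable sub-linear power of $|I|$, the perturbed cocycle differs from the quasi-periodic one only by a correction that is far below every tolerance appearing in the BG framework.

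First I would carefully compare transfer matrices. Writing the perturbed cocycle matrix as $\widetilde{A}^E(n) = A^E(n\alpha+\theta_0) + P_n$ with $\|P_n\| \le C e^{-\kappa|n|}$, a first-order telescoping expansion shows that on any interval $I = [a, a+N]$ the product $\widetilde{M}^E_I$ factors as $M^E_I(a\alpha+\theta_0) \cdot (I + R_I)$, where $\|R_I\| \lesssim e^{-\kappa|a|}$, essentially independent of $N$, since the factors $\|M^E_{N,n+1}\| \cdot \|M^E_{n-1,0}\|$ telescope back to $\|M^E_N\|$. As an immediate consequence, the Lyapunov exponent of $\widetilde{H}_{v,\alpha,\theta_0}$ coincides with $L(\alpha,E)$ on both half-lines, and the quasi-periodic large deviation theorem of Bourgain--Goldstein transfers to an analogous LDT for $\widetilde{M}^E_N(a)$ on intervals with $|a|$ exceeding a suitably small power of $N$, at the same sub-exponential rate.

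Next I would feed this LDT into the BG multiscale induction: the avalanche principle together with the semi-algebraic complexity estimates produces off-diagonal exponential decay of the Green's function $G^{[a,b]}_E(x,y)$ of $\widetilde{H}_{v,\alpha,\theta_0}$ on every interval $[a,b]$ sufficiently far from the origin, for all energies $E \in (E_1,E_2)$ and all frequencies $\alpha$ outside a bad set. Elimination of double resonances in $\alpha$, exactly as in \cite{bourgain2000nonperturbative,bourgain2005positivity}, reduces this bad set inside $DC_d$ to measure zero. For any polynomially bounded generalized eigenfunction $\psi$ of $\widetilde{H}_{v,\alpha,\theta_0}$ at $E$ and any $n$ with $|n|$ large, the Poisson-type identity $\psi(n) = -G^{[a,b]}_E(n,a)\,\psi(a-1) - G^{[a,b]}_E(n,b)\,\psi(b+1)$, applied on $[a,b] = [n/2, 2n]$ (symmetrically for $n<0$), combined with Green's function decay on this interval and the polynomial bound on $\psi(a\pm 1), \psi(b\pm 1)$, forces $|\psi(n)| \le e^{-c|n|}$. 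This delivers Anderson localization of $\widetilde{H}_{v,\alpha,\theta_0}$ throughout $(E_1, E_2)$.

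The main obstacle is that the perturbation destroys the shift covariance underlying the BG framework, so the multiscale induction, the avalanche principle, and the semi-algebraic resonance analysis cannot be quoted verbatim. The workaround is to confine the multiscale induction to intervals that avoid a fixed neighborhood of the origin --- on such intervals the perturbation is exponentially negligible relative to the Lyapunov scale, and the shift-covariance required at each step of BG holds up to an exponentially small error. Verifying that the geometric complexity bounds used in the elimination of resonances survive these error terms, and that the endpoints $a,b$ of the Poisson-formula intervals can always be chosen to avoid the central region, is the technical heart of the proof; it is precisely here that the exponential decay hypothesis on $g$ is indispensable, as a polynomial decay rate would produce corrections exceeding the sub-exponential LDT tolerance $e^{-N^\sigma}$ and would invalidate the induction.
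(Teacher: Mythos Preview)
Your proposal is correct and follows essentially the same route as the paper's proof in Section~\ref{sec.4}: telescoping to compare perturbed and unperturbed transfer matrices (Lemmas~\ref{perturbedM_k} and \ref{cor4}), transferring the Bourgain--Goldstein LDT to the perturbed cocycle on intervals at distance $\ge N^2$ from the origin (Proposition~\ref{Prop7.1}), checking that the exceptional sets can be taken semi-algebraic of controlled degree \emph{uniformly in $g$} (the Claim in Step~1 of Lemma~\ref{lemdistEE'}), and then running the standard frequency-elimination. The only places where the paper is more explicit than your sketch are the intermediate approximation of the generalized eigenvalue $E$ by a finite-volume eigenvalue (Lemma~\ref{lemdistEE'}) and the use of the paving property to glue short good intervals into the long interval on which the Poisson formula is applied---both of which you are implicitly invoking under ``exactly as in BG''; note also that your multiplicative bound $\|R_I\|\lesssim e^{-\kappa|a|}$ does not follow from the telescoping identity alone (the inequality $\|M_{N,n+1}\|\cdot\|M_{n-1,0}\|\le C\|M_N\|$ goes the wrong way), but the additive bound $\|\widetilde{M}_N-M_N\|\le e^{(L+\epsilon)N-\kappa|a|}$ of Lemma~\ref{cor4} is what is actually needed and suffices once $|a|\ge N^2$.
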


%

\begin{remark}
Our result shows that for a fixed perturbation, most operators in the quasi-periodic family continue to have pure point spectrum. This should be contrasted with what rank-one perturbation theory (cf., e.g.,  \cite{del1994operators,gordon1994pure}) provides, namely for a \textit{fixed} operator and a family of perturbations, a generic but zero measure set of perturbations may destroy the pure point spectrum (if it occurs on some interval) and turn it into singular continuous spectrum. Thus, these are two different and complementary perspectives and phenomena. 
\end{remark}

\subsubsection{Global Picture in the One-Frequency Case}

Let us now turn our attention to the case $d = 1$. The spectral analysis of the one-frequency case is more complete, and indeed more ``global'', thanks in large part to Avila's global theory for one-frequency analytic quasi-periodic Schr\"odinger  cocycles \cite{avila2015global}. Indeed, Avila's global theory \cite{avila2015global} shows that all analytic $ \mathrm{SL}(2,\C) $ cocycles $ (\alpha,A) $ that are not \textit{uniformly hyperbolic} can be classified into three regimes: 
\begin{enumerate}
    \item \textit{Subcritical}, if there exists $ \delta> 0 $ such that the holomorphic extension of $ (\alpha,A) $, $ (\alpha,A_z) $, with $ L(\alpha,A_z)=0 $ through some strip $ |\Im z| \leq\delta $,
    \item \textit{Supercritical}, or \textit{nonuniformly hyperbolic}, if $ L(\alpha,A)>0 $, but $ (\alpha,A) $ is not uniformly hyperbolic,
    \item \textit{Critical}, otherwise.
\end{enumerate}
Applying this classification to Schr\"odinger operators $ H_{v,\alpha,\theta} $, we can also partition the energy axis into four regimes according to the classification of the corresponding Schr\"odinger cocycle: energies in the complement of the spectrum can be characterized as uniformly hyperbolic, and energies in the spectrum can be classified as subcritical, supercritical, and critical, respectively. The main result in \cite{avila2015global} is the following: for (measure-theoretically) typical $ v\in C^\omega(\T,\R) $, $ H_{v,\alpha,\theta} $ is acritical, that is, no energy $ E\in\Sigma $ is critical, where $ \Sigma $ denotes its spectrum. An interesting consequence is that acritical operators have the following spectrum bifurcation: There are $ k\geq 1 $ and points $ a_1<b_1<\ldots<a_k<b_k $ in the reslovent set such that $ \Sigma\subset\bigcup_{i=1}^k[a_i,b_i] $ and energies alternate between supercritical and subcritical along the sequence $ \{\Sigma \cap[a_i,b_i]\}_{i=1}^k $. Then, Avila's Almost Reducibility Conjecture (ARC),\footnote{A proof of the ARC was announced in \cite{avila2015global}, to appear in \cite{avila2010almost2,avila2010almost}. Indeed, the result was already stated as the \textit{Almost Reducibility Theorem} (ART) in \cite{AJM2017Spectral}.} which says that any subcritical cocycle is almost reducible, implies that the subcritical regime can only support absolutely continuous spectrum \cite{avila2010almost2}, and the supercritical regime supports pure point spectrum typically \cite{bourgain2000nonperturbative}, which allows us to conclude the typical absence of singular continuous spectrum. 

Our aim is to show that under an exponential decay assumption, this behavior is preserved under the perturbation by $g$, where $g$ is said to \emph{decay exponentially} if there is $\kappa > 0$ such that $e^{\kappa |n|} g(n) \in \ell^\infty(\Z)$.

In order to facilitate the formulation of the result we obtain, we consider a $ v\in C^\omega(\T,\R) $ for which the spectral description above holds and split $ \{1,\ldots,k\}= I_1 \sqcup I_2 $, where $ I_1 $ denotes the largest set such that for any $ i\in I $, the energies in $ \Sigma\cap[a_i,b_i] $ are subcritical. Let $ \Sigma^{sub}:=\bigcup_{i\in I_1}(\Sigma\cap[a_i,b_i]) $, and $ \Sigma^{sup}:=\bigcup_{i\in I_2}(\Sigma\cap[a_i,b_i]) $. 

We can now state our stability result in the one-frequency case.

\begin{theorem}\label{mainmain}
    For {\rm (}measure-theoretically{\rm )} typical $ v\in C^{\omega}(\T,\R) $ and exponentially decaying $ g $, we have the following: For every $ \theta\in\T $ and Lebesgue almost every $ \alpha \in \T$, the following holds for $ \widetilde{H}_{v,\alpha,\theta} $:
    \begin{enumerate}
        \item There is no singular continuous spectrum.
        \item The restriction of the canonical spectral measure of $ \widetilde{H}_{v,\alpha,\theta} $ to $ \Sigma^{sub} $ is purely absolutely continuous. 
        \item There are at most finitely  many eigenvalues in the restriction of each gap of $H_{v,\alpha,\theta} $ to $ \bigcup_{i\in I_1}[a_i,b_i] $.
        \item The restriction of the canonical spectral measure of $ \widetilde{H}_{v,\alpha,\theta} $ to  $ \bigcup_{i\in I_2}[a_i,b_i] $ is pure point and every eigenfunction decays exponentially.
    \end{enumerate}
\end{theorem}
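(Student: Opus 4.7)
The plan is to assemble Theorem~\ref{mainmain} from three ingredients: Avila's global theory, which furnishes the spectral decomposition for typical $v$; an extension of Theorem~\ref{acthm} from the perturbative regime to the full subcritical regime via the Almost Reducibility Theorem (ART); and a direct application of Theorem~\ref{mainresult} on the supercritical pieces. An exponentially decaying $g$ automatically satisfies both $g \in \ell^{1,1}$ (the hypothesis of Theorem~\ref{acthm}) and the exponential decay hypothesis of Theorem~\ref{mainresult}, so no compatibility issue arises between the two halves of the argument.

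First, I would apply the global theory of \cite{avila2015global} to a (measure-theoretically) typical $v \in C^\omega(\T,\R)$, concluding that $H_{v,\alpha,\theta}$ is acritical and that its spectrum decomposes as $\Sigma = \Sigma^{sub} \sqcup \Sigma^{sup}$ along alternating intervals $[a_i,b_i]$ as described before the theorem. By Weyl's criterion the essential spectrum of $\widetilde H_{v,\alpha,\theta}$ equals $\Sigma$, so any spectrum of $\widetilde H$ outside $\Sigma$ consists of isolated eigenvalues of finite multiplicity. For $i \in I_1$, the Schr\"odinger cocycle is subcritical on a complex neighborhood of $\Sigma \cap [a_i,b_i]$, hence almost reducible by the ART. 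Performing an analytic quasi-periodic conjugation brings the cocycle arbitrarily close to a constant one, and in this conjugated frame the proof of Theorem~\ref{acthm} transfers: the Jost-solution construction under the $\ell^{1,1}$ control on $g$ goes through, yielding that the spectral measure restricted to $\Sigma^{sub}$ is purely absolutely continuous and that each gap of $H_{v,\alpha,\theta}$ contained in $\bigcup_{i \in I_1}[a_i,b_i]$ accumulates only finitely many eigenvalues of $\widetilde H_{v,\alpha,\theta}$. This yields parts (2) and (3).

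For part (4), on each $[a_i,b_i]$ with $i \in I_2$ the cocycle is supercritical, so the Lyapunov exponent is strictly positive on the spectrum; by semicontinuity and compactness one obtains a uniform lower bound $c_0 > 0$ on an open neighborhood of the interval. Theorem~\ref{mainresult} then applies with $\theta_0 = \theta$ and supplies Anderson localization for $\widetilde H_{v,\alpha,\theta}$ on $(a_i,b_i)$ for almost every $\alpha \in \mathrm{DC}_1$. Intersecting the finitely many full-measure sets over $i \in I_2$ preserves full measure of good $\alpha$. Part (1) is then immediate from the other three parts: on $\Sigma^{sub}$ the measure is absolutely continuous, on $\Sigma^{sup}$ it is pure point, and outside $\Sigma$ only isolated eigenvalues occur, so no singular continuous spectrum can be present.

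The step I expect to be the main obstacle is the extension of Theorem~\ref{acthm} from small coupling to the entire subcritical regime via the ART. One must verify that the quasi-periodic conjugation used in the almost-reducibility normal form---whose analyticity strip may shrink along the KAM iterations---does not spoil the weighted $\ell^1$ structure of $g$, and that the gap-by-gap counting of discrete eigenvalues survives the passage back to the original coordinates. Because $g$ enters the conjugated transfer matrix only through multiplication by the same real sequence, up to a bounded quasi-periodic matrix factor, the $\ell^{1,1}$ condition should be preserved, but making the Jost-function and eigenvalue-counting analysis rigorous in the almost-reducible frame is the technical heart of the proof.
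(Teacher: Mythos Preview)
Your proposal is correct and follows essentially the same architecture as the paper: reduce parts~(2)--(3) to the almost-reducible setting via the ART (the paper's Lemma~\ref{globaltolocal} supplies the uniform conjugation into the perturbative regime, after which the proof of Theorem~\ref{acthm} runs verbatim), apply Theorem~\ref{mainresult} on the supercritical pieces for part~(4), and deduce part~(1) from the others together with Weyl's criterion. The one point worth sharpening is your description of the subcritical step as a ``Jost-solution construction'': the paper's argument for pure absolute continuity is dynamical (cocycle growth control via Lemma~\ref{new-gron}, subordinacy theory, and a Borel--Cantelli estimate), and since the conjugation $\Phi_E$ is bounded in sup norm and the perturbation enters only as multiplication by $g(n)$, the $\ell^{1,1}$ structure survives automatically---so the obstacle you flag does not in fact arise.
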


\begin{remark}
(a) It is possible that $ \Sigma^{sup} $ does not contain all eigenvalues of $ \widetilde{H}_{v,\alpha,\theta} $ inside $ \bigcup_{i\in I_2}[a_i,b_i] $ because a decaying perturbation may {\rm (}and usually does{\rm )} produce eigenvalues in spectral gaps of the unperturbed operator. However, the accumulation points of the discrete eigenvalues of $ \widetilde{H}_{v,\alpha,\theta} $ in $ \bigcup_{i\in I_2}[a_i,b_i] $ must be in $\Sigma^{sup} $.
\\[1mm]
(b) On the other hand, since the perturbation preserves the essential spectrum and the discrete spectrum consists of discrete simple eigenvalues for which it is known that the associated eigenfunction decays exponentially, an equivalent formulation of item (3) is the following: The restriction of the canonical spectral measure of $ \widetilde{H}_{v,\alpha,\theta} $ to  $ \Sigma^{sup} $ is pure point and every eigenfunction decays exponentially.
\end{remark}

\begin{remark}
Let us comment on the difficulty and novelty aspects of the proofs, especially the proof of the persistence of purely absolutely continuous spectrum. It is not difficulty to obtain the result if the perturbation decays fast enough. However, it is a delicate issue to push the proof to obtain the result under the optimal decay assumption. Our method is based on ideas from dynamical systems, especially those developed by Avila in \cite{avila2010almost}. In this approach, one needs to prove that the set of spectral parameters with unbounded solutions has zero weight with respect to the spectral measure, which in turn relies on a measure estimate for the set of spectral parameters for which the corresponding cocycles have a given growth rate, and one seeks to establish suitable almost reducibility results (where one can conjugate into an arbitrarily small neighborhood of a constant) to control the growth of the cocycles. The  usual method \cite{avila2010almost, li2021absolutely, wang2021exact} is a Gronwall-type estimate  (first developed in Avila-Krikorian \cite [Lemma 3.1]{AK06reducibility} and also Avila-Fayad-Krikorian \cite[Claim 4.6]{krikorian2011kam}). However, using this traditional method, one can only prove the result for $g \in \ell^{2,1}$ (even this is not direct, one still needs to strengthen the estimates). Our method is different and explained in Lemma~\ref{new-gron} (see also Remark~\ref{rem:5.1}). Also our method to deal with the gap edges under the optimal decay condition is new (see Lemma~\ref{lem5.6}). The traditional way is to use the $ M $-function, this argument is also important to prove the result concerning finitely many eigenvalues in each of the gaps. 
To  prove the latter result, the key issues here are Proposition~\ref{positivesol} (positive ground state) and Proposition~\ref{conDep} (continuity of the Weyl solution). These two results were both proved quite recently, they depend on a recently developed new almost reducibility scheme \cite{leguil2017asymptotics, li2021absolutely}.
\end{remark}

\bigskip

The remainder of the paper is organized as follows. Section~\ref{sec.2} collects some important and well-known notions and results, which will be used in subsequent sections. Section~\ref{sec.3} establishes some growth and deviation estimates for perturbed transfer matrices. Theorem~\ref{mainresult} is proved in Section~\ref{sec.4}. Section~\ref{sec6} contains the proofs of part $(1)$ of Theorem \ref{acthm} and part $(2)$ of Theorem~\ref{mainmain}. Part $(2)$ of Theorem \ref{acthm} and part $(3)$ of Theorem~\ref{mainmain} are proved in Section~\ref{sec.6}. 
Finally, parts $(1)$ and $(4)$ of Theorem~\ref{mainmain} are proved in Section~\ref{sec.7}.

\section{Preliminaries}\label{sec.2}

\subsection{Quasi-Periodic Cocycles, Lyapunov Exponent, and Fibered Rotation Number}

Let $ \alpha\in\R^d $ with $ (1,\alpha) $ rationally independent.  A quasi-periodic cocycle $(\alpha, A)\in \R^d\times C^\omega(\T^d, \mathrm{SL}(2,\R))$  is a linear skew-product:
\begin{eqnarray*}\label{cocycle}
(\alpha,A):&\T^{d} \times \R^2 \to \T^{d} \times \R^2\\
\nonumber &(\theta,v) \mapsto (\theta+\alpha,A(\theta) \cdot v).
\end{eqnarray*}
Recall that a sequence $ (u_n)_{n\in\Z} $ is a formal solution of the eigenvalue equation $$ H_{v,\alpha,\theta}u=Eu $$ if and only if it satisfies $\left(\begin{matrix}
    u_{n+1}\\u_n
\end{matrix}\right)=S_E^v(\alpha,\theta,n)\cdot\left(\begin{matrix}
    u_n\\u_{n-1}
\end{matrix}\right) ,$ where
$$
    S_E^v(\alpha,\theta,n)=\begin{pmatrix}
        E-v(\theta+n\alpha)&-1\\1&0
    \end{pmatrix}\in\mathrm{SL}(2,\R).
$$
The map $ (\alpha,S_E^v) $ is called a \textit{Schr\"odinger cocycle}.

Denote the transfer matrix by 
\begin{align*}
    M_k(\alpha,\theta,E,n)&=S_E^v(\alpha,\theta ,n+k-1)\cdots S_E^v(\alpha,\theta, n),\end{align*}      
and $ M_{-k}(\alpha,\theta,E,n)=M_k(\alpha,\theta,E,n-k)^{-1} $. Define
\begin{equation*} 
    L_k(\alpha,E)=\frac{1}{k}\int_0^1 \log \lVert M_k(\alpha,\theta,E,0)\rVert \, d\theta.
\end{equation*} 
It is well-known that the Lyapunov exponent
\begin{equation*}
    L(\alpha,E)=\lim_{k\to\infty}L_k(\alpha,E)=\inf_k L_k(\alpha,E)\geq 0.
\end{equation*} 
is well-defined.

Assume $ A\in C^0(\T^d,\mathrm{SL}(2,\R)) $ is homotopic to the identity. Then there exist $ \psi:\T^d\times\T\to\R $ and $ u:\T^d\times\T\to\R_+ $ such that 
\[
    A(x)\cdot\begin{pmatrix}
        \cos 2\pi y\\\sin 2\pi y
    \end{pmatrix}=u(x,y)\begin{pmatrix}
        \cos 2\pi (y+\psi(x,y))\\\sin 2\pi (y+\psi(x,y))
    \end{pmatrix}.
\] 
The function $ \psi $ is called a \textit{lift} of $ A $. Let $ \mu $ be any probability measure on $ \T^d\times\R $ which is invariant by the continuous map $ T:(x,y)\mapsto(x+\alpha,y+\psi(x,y)) $, projecting over Lebesgue measure on the first coordinate. 
Then the \textit{fibered rotation number} of $ (\alpha,A) $ is defined by
\[
    \rho(\alpha,A)=\int\psi\ \dif\mu \!\!\!\! \mod \Z,
\] 
which does not depend on the choices of $ \psi $ and $ \mu $. It is easy to see that $ \rho(\alpha,S_E^v) $ admits a determination $ \rho_{v,\alpha}(E)\in[0,1/2] $. The 
{\it integrated density of states} (IDS) $N_{v,\alpha}\colon\R\to [0,1]$ of $H_{v,\alpha,\theta}$ is defined as
\[
N_{v, \alpha}(E):=\int_{\T^d} \mu_{v, \alpha,\theta}(-\infty,E] \, d\theta,
\]
where $\mu_{v,\alpha,\theta}$ is the spectral measure of $H_{v,\alpha,\theta}$ (and $\delta_0$).  
One can check that $ N_{v,\alpha}(E)=1-2\rho_{v,\alpha}(E) $; see \cite{avila2009ten, AS1983Almost, gaplabel}.

Recall that the  spectrum of $H_{v, \alpha, \theta}$, denoted by $\Sigma_{v, \alpha}$, is a compact subset of $\R$, independent of $\theta$ if $(1,\alpha)$ is rationally independent.
Any bounded connected component of $\R \backslash\Sigma_{v, \alpha}$ is called a \textit{spectral gap}.  


\subsection{Semi-Algebraic Sets}
    A set $ \mathcal{S}\subset \R^n $ is called a \textit{semi-algebraic set} if it is a finite union of sets defined by a finite number of polynomial equalities and inequalities. More precisely, let $ \{P_1,\cdots,P_s\}\subset\R[x_1,\cdots,x_n] $ be a family of real polynomials whose degrees are bounded by $ d $. A (closed) semi-algebraic set $ \mathcal{S} $ is given by an expression 
    \begin{equation}\label{semiS}
        \mathcal{S}=\bigcup_j\bigcap_{\ell\in \mathcal{L}_j}\{x\in\R^n:P_{\ell}(x)s_{j\ell}0\},
    \end{equation} 
    where $ \mathcal{L}_j\subset\{1,\cdots,s\} $ and $ s_{j\ell}\in\{\leq,\geq,=\} $ are arbitrary. Then we say that $ \mathcal{S} $ has degree at most $ sd $, and its degree is the infimum of $ sd $ over all representations as in (\ref{semiS}).

We introduce some useful facts about semi-algebraic sets; see, for example, \cite{bourgain2004green}.

\begin{lemma}[\cite{bourgain2004green} Tarski-Seidenberg principle]\label{TSprinciple}
If $ \mathcal{S}\subset\R^n $ is semi-algebraic of degree B, then any projection of $ \mathcal{S} $ is also semi-algebraic and of degree at most $ B^C $, where $ C $ only depends on $ n $.
\end{lemma}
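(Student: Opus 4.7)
\emph{Proof plan.} The plan is to iterate a one-variable elimination step, since any projection $\pi\colon\R^n\to\R^k$ factors through the $n-k$ coordinate hyperplanes. If one shows that eliminating a single coordinate from a semi-algebraic set of degree $B$ in $\R^n$ produces a semi-algebraic set of degree at most $B^{c}$ for some absolute constant $c$, then applying this at most $n$ times yields the claimed bound $B^{c^n}$, with the exponent $C=c^n$ depending only on $n$.

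For the single-variable elimination, write $\mathcal{S}$ as in \eqref{semiS} using polynomials $P_1,\dots,P_s\in\R[x_1,\dots,x_n]$ of degree at most $d$, with $sd\le B$. View each $P_\ell$ as a polynomial in $x_n$ of degree $\le d$ whose coefficients lie in $\R[x_1,\dots,x_{n-1}]$. The projection onto the first $n-1$ coordinates is the set of $(x_1,\dots,x_{n-1})$ for which there exists $x_n\in\R$ realizing the Boolean combination of sign conditions defining $\mathcal{S}$. The key tool is the quantitative version of quantifier elimination for real closed fields: the existence of such an $x_n$ is equivalent to a Boolean combination of sign conditions on a finite auxiliary family of polynomials in the coefficients of the $P_\ell$'s. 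Concretely, this family consists of the leading coefficients, the iterated derivatives in $x_n$, and the subresultants (equivalently, principal minors of the Sylvester--B\'ezout matrices) of the $P_\ell$'s and their pairwise products, together with the $P_\ell$'s themselves. The sufficiency follows from Thom's lemma, which says that the sign pattern of a polynomial and its derivatives on the real line determines its qualitative behavior between consecutive real roots.

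The main obstacle is tracking how the degree grows under this elimination. The subresultants, discriminants, and resultants are fixed polynomial expressions of degree $O(d^2)$ in the coefficients of the $P_\ell$'s; substituting back these coefficients, which are polynomials in $(x_1,\dots,x_{n-1})$ of degree $\le d$, yields polynomials of degree $\le c_0 d^3$ for an absolute constant $c_0$, and the number of such polynomials is $\le c_0 s^2$. Thus a single elimination step takes a set of degree $B=sd$ to one of degree at most $c_0 s^2\cdot c_0 d^3\le c_1 B^5$ (after enlarging the constant), and iterating $n$ times gives the claimed bound $B^C$ with $C$ depending only on $n$. The delicate point, and the place where the argument really earns its degree bound, is the combinatorial bookkeeping verifying that the sign pattern of $P_\ell(x_1,\dots,x_{n-1},\cdot)$ on the $x_n$-axis is indeed reconstructible from the sign pattern of the auxiliary family on $\R^{n-1}$; without the subresultant/Thom's-lemma machinery one obtains only a tower-of-exponentials bound as in Tarski's original argument, which would be insufficient here.
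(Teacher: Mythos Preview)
The paper does not prove this lemma at all: it is stated with a citation to \cite{bourgain2004green} and no argument is given. So there is no ``paper's own proof'' to compare against; the authors simply import the result as a black box.

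That said, your outline is essentially the standard effective quantifier-elimination argument (in the style of Collins' cylindrical algebraic decomposition, or the Basu--Pollack--Roy treatment), and the strategy is sound. Reducing to a single-coordinate projection, using subresultants and derivatives to capture the real-root structure in the eliminated variable, and invoking Thom's lemma to reconstruct the sign pattern is exactly the right mechanism; the degree bookkeeping you sketch (subresultants of degree $O(d^2)$ in the coefficients, hence $O(d^3)$ after substitution, and polynomially many auxiliary polynomials) gives a polynomial blow-up per step and thus $B^{C(n)}$ after at most $n$ eliminations. One minor caution: your explicit exponent $5$ in ``$c_1 B^5$'' is heuristic and would need to be pinned down more carefully in a full proof (the precise constant depends on exactly which auxiliary family you take and how you handle the Boolean combination), but for the lemma as stated only the existence of some $C=C(n)$ matters, so this is not a gap.
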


\begin{lemma}[\cite{bourgain2004green}]\label{cor9.7inB}
Suppose that $ S\subset[0,1]^n $ is semi-algebraic of degree $ B $ and $ \meas_n S<\eta $. Let $ \alpha\in\T^n $ satisfy a DC and $ N $ be a large integer,
    $$
        \log B\ll \log N<\log \frac{1}{\eta}.
    $$ 
    Then for any $ \theta_0\in\T^d $
    \begin{equation}\label{9.8}
        \# \{k=1,\cdots,N|\theta_0+k\alpha\in \mathcal{S}( \mathrm{mod} 1)\}<N^{1-\delta}
    \end{equation} 
    for some $ \delta=\delta(\alpha) $.
\end{lemma}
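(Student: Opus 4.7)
The plan is to combine a quantitative structure theorem for semi-algebraic sets with a discrepancy estimate for Diophantine rotations on the torus.

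First, using cylindrical algebraic decomposition (or Yomdin--Gromov triangulation), the semi-algebraic set $\mathcal{S}\subset [0,1]^n$ of degree $B$ can be decomposed into at most $B^{C(n)}$ cells of simple shape, and in particular covered by $B^{C(n)}$ axis-aligned boxes $R_j$ whose total Lebesgue measure satisfies $\sum_j \meas_n(R_j)\lesssim B^{C'(n)}\eta$ (by a Fubini-type argument applied cell by cell). Because the hypothesis forces $\log B \ll \log N$, any fixed polynomial in $B$ is negligible against any fixed positive power of $N$, so the polynomial dependence on $B$ will be absorbable at the end.

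Next, the Erd\H{o}s--Tur\'an--Koksma inequality combined with $\alpha\in \mathrm{DC}_n(\gamma,\tau)$ yields the discrepancy bound
\[
    \left|\#\{k\in\{1,\dots,N\}:\theta_0+k\alpha\in R\}-N\,\meas_n(R)\right|\leq C(\alpha,n)\,N^{1-\delta_0}
\]
uniformly in axis-aligned boxes $R\subset[0,1]^n$, where $\delta_0=\delta_0(\alpha,n)>0$ depends only on the Diophantine exponents (roughly, $\delta_0 \sim 1/\tau$). Summing this over the covering boxes $R_j$ gives
\[
    \#\{k\le N:\theta_0+k\alpha\in\mathcal{S}\}\leq N\cdot B^{C'(n)}\eta + B^{C(n)}\,C(\alpha,n)\,N^{1-\delta_0}.
\]
The first term is controlled using $\log N<\log(1/\eta)$, which gives $N\eta<1$, while $\log B\ll\log N$ forces the second to be at most $N^{1-\delta_0/2}$ for $N$ large. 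Choosing $\delta=\delta_0/3$ then yields the stated bound.

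The main obstacle is the first step: one needs polynomial-in-$B$ control both on the number of boxes in the decomposition and on their total measure, for arbitrary semi-algebraic sets of measure $<\eta$. This is nontrivial and must be extracted from quantitative real algebraic geometry. Once this structural input is granted, the remainder is a routine balancing of estimates, made possible precisely by the two-sided threshold $\log B\ll\log N<\log(1/\eta)$ in the hypothesis.
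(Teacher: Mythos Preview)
The paper does not prove this lemma; it is quoted from Bourgain's monograph \cite{bourgain2004green} without argument. So the only question is whether your proposed proof stands on its own.

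There is a genuine gap, and it is precisely the step you flag as ``the main obstacle.'' The covering claim you need---that a semi-algebraic set $\mathcal{S}\subset[0,1]^n$ of degree $B$ and measure $<\eta$ can be covered by $B^{C(n)}$ axis-aligned boxes of total measure $\lesssim B^{C'(n)}\eta$---is false in general. Consider the diagonal strip $\mathcal{S}=\{(x,y)\in[0,1]^2:|x-y|<\eta\}$, which has degree $O(1)$ and measure $\sim\eta$. An axis-aligned rectangle $[a,a+w]\times[b,b+h]$ can cover a portion of the diagonal of length at most $O(\min(w,h))\le O(\sqrt{wh})$. If you use $B^C$ rectangles with $\sum w_jh_j\le B^{C'}\eta$, then by Cauchy--Schwarz the total diagonal length covered is at most $B^{C''}\sqrt{\eta}$, which is $\ll 1$ for small $\eta$. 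So you cannot cover even this very simple set under your constraints. Cylindrical algebraic decomposition does produce $B^{C}$ cells, but the cells are graphs over coordinate directions with polynomial boundaries, and their enclosing axis-aligned boxes can have measure vastly larger than the cells themselves; no ``Fubini-type argument'' repairs this.

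The argument that actually works (and is the one in \cite{bourgain2004green}) is different in structure: one covers $\mathcal{S}$ by roughly $B^{C}\epsilon^{-(n-1)}$ balls of a \emph{single small radius} $\epsilon$, chosen on the scale $\eta^{1/n}$ (the point being that a small-measure semi-algebraic set is close to an $(n-1)$-dimensional variety, whose $\epsilon$-entropy is $\epsilon^{-(n-1)}$). One then bounds the number of orbit points in each such ball using the Diophantine separation of $\{\theta_0+k\alpha\}_{k\le N}$, and balances $\epsilon$ against $N$ using $\log B\ll\log N<\log(1/\eta)$. Your discrepancy step is fine in spirit, but it must be paired with this entropy-type covering rather than a bounded-cardinality, bounded-measure covering.
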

\begin{lemma}[\cite{bourgain2004green}]\label{lem9.9inB}
    Let $ \mathcal{S}\subset[0,1]^{2n} $ be a semi-algebraic set of degree $ B $ and $ \meas_n \mathcal{S}<\eta $, $ \log B\ll\log\frac{1}{\eta} $. We denote $ (\omega,x)\in [0,1]^n\times[0,1]^n  $ the product variable. Fix $ \varepsilon>\eta^{\frac{1}{2n}} $. Then there is a decomposition 
    $$
        \mathcal{S}=\mathcal{S}_1\cup\mathcal{S}_2
    $$ 
    with $ \mathcal{S}_1 $ satisfying
    $$
        \meas_n (\mathrm{Proj}_{\omega}S_1)<B^{C(n)} \varepsilon
    $$ 
    and $ \mathcal{S}_2 $ satisfying the transversality property
    $$
        \meas_n(\mathcal{S}_2\cap L)\leq B^{C(n)}\varepsilon^{-1}\eta^{\frac{1}{2n}}
    $$ 
    for any $ n-$dimensional hyperplane s.t. 
    $$ 
        \max\limits_{0\leq j\leq n-1}|\mathrm{Proj}_L(e_j)|<\frac{1}{100}\varepsilon, 
    $$ 
    where we denote by $ e_0,\cdots,e_{n-1} $ the $ \omega$-coordinate vectors.
\end{lemma}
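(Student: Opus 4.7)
The plan is to follow the standard semi-algebraic decomposition strategy used in \cite{bourgain2004green}. The essential tools are cylindrical algebraic decomposition (CAD), the Tarski-Seidenberg projection bound from Lemma \ref{TSprinciple}, and quantitative Bezout-type bounds on the number of connected components of a semi-algebraic set of bounded degree.

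The first step is to decompose $\mathcal{S}$ as a disjoint union of at most $B^{C(n)}$ semi-algebraic cells $C_j$, each described by polynomial inequalities of controlled degree and homeomorphic to a ball. For each cell, its projection $\mathrm{Proj}_{\omega}(C_j)\subset[0,1]^n$ is again semi-algebraic of degree at most $B^{C(n)}$ by Lemma \ref{TSprinciple}. Introduce a threshold and classify: place $C_j$ into $\mathcal{S}_1$ if $\meas_n(\mathrm{Proj}_{\omega}C_j)<\varepsilon\cdot B^{-C(n)}$, and into $\mathcal{S}_2$ otherwise. Summing over cells gives $\meas_n(\mathrm{Proj}_{\omega}\mathcal{S}_1)<B^{C(n)}\varepsilon$, as required.

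For $\mathcal{S}_2$ one has to upgrade the Fubini bound on coordinate slices into a slicing bound on arbitrary nearly transverse affine $n$-planes. Write each cell $C_j\subset\mathcal{S}_2$ as a graph, over a portion of the $\omega$-plane of measure $\ge\varepsilon B^{-C(n)}$, of a semi-algebraic function in the $x$-fiber. The total $2n$-dimensional volume constraint $\meas_{2n}\mathcal{S}<\eta$ combined with this lower bound on the $\omega$-footprint forces the transverse $x$-extent, averaged over the footprint, to be at most $\eta\cdot B^{C(n)}/\varepsilon$. For a hyperplane $L$ with $\max_{0\le j\le n-1}|\mathrm{Proj}_L(e_j)|<\varepsilon/100$, the Jacobian of projection $L\to [0,1]^n$ along the $x$-direction is of order $1$ up to a factor in $\varepsilon$, so the $n$-volume $\meas_n(\mathcal{S}_2\cap L)$ can be converted into an integral over the $\omega$-footprint against the transverse extent. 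A Cauchy-Crofton-type integral-geometric identity, combined with a Bezout bound of $B^{C(n)}$ on the number of components of $L\cap C_j$ from the semi-algebraic structure, then yields $\meas_n(\mathcal{S}_2\cap L)\le B^{C(n)}\varepsilon^{-1}\eta^{1/(2n)}$ after balancing the two thresholds via the assumption $\varepsilon>\eta^{1/(2n)}$.

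The main obstacle, as expected, is this last step: going from a bound on the ambient volume of $\mathcal{S}$ to a uniform slicing bound on families of affine planes whose normal direction may rotate within a large cone. Coordinate-axis Fubini is immediate, but the rotation is controlled only through the small-projection hypothesis $\max_{j}|\mathrm{Proj}_L(e_j)|<\varepsilon/100$, which one must carefully convert into a Jacobian estimate. The Bezout-type complexity control, which bounds the number of connected components of $L\cap\mathcal{S}$ by $B^{C(n)}$ independently of $L$, is what makes this uniform control possible, and the exponent $1/(2n)$ arises from the optimal balance between the $\omega$-threshold $\varepsilon B^{-C(n)}$ and the transverse $x$-thickness $\eta B^{C(n)}/\varepsilon$.
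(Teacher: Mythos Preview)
The paper does not prove this lemma. It is quoted directly from Bourgain's monograph \cite{bourgain2004green} (it is Lemma~9.9 there) and is used as a black-box input in Section~\ref{sec.4}; no argument for it appears anywhere in the paper. So there is no ``paper's own proof'' to compare your attempt against.

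As a separate matter, your sketch follows the correct overall architecture of Bourgain's original argument (CAD/Yomdin--Gromov cell decomposition, threshold on the $\omega$-projection, complexity control via B\'ezout), but the final paragraph does not actually close. From $\meas_{2n}\mathcal{S}<\eta$ and an $\omega$-footprint of size $\ge \varepsilon B^{-C(n)}$ you only get an \emph{average} transverse thickness of order $\eta B^{C(n)}/\varepsilon$; this is the wrong power of $\eta$ and, more importantly, an average bound does not by itself control $\meas_n(\mathcal{S}_2\cap L)$ for every admissible $L$. In Bourgain's proof the exponent $\eta^{1/(2n)}$ and the uniformity in $L$ come from covering each cell by $B^{C(n)}$ balls of a fixed radius $\rho$ and then optimizing $\rho$ against the transversality hypothesis, not from a Fubini/Crofton step as you describe. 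If you want a self-contained argument you should make that covering-and-optimization step explicit; as written, the last step is a genuine gap.
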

\subsection{Renormalized Oscillation Theory}
Let $ H $ be the Jacobi operator:
    \[
        (H u)(n)=a(n)u(n+1)+a(n-1)u(n-1)-b(n)u(n),
    \]  
    where $ a(n)<0 $, $ b(n)\in\R $, $ n\in\Z $. Let $ u $ be a solution of $ Hu=\lambda u $. A point $ n\in\Z $ is called a \textit{node} of solution $ u $ if either 
    \[
        u(n)=0 \text{ or }a(n)u(n)u(n+1)>0.
    \] 
    Denote by $ \sharp (u) $ the total number of nodes of $ u $ and by $ \sharp_{(m,n)}(u) $ the number of nodes of $ u $ between $ m $ and $ n $. More precisely, we shall say that a node $ n_0 $ of $ u $ lies between $ m $ and $ n $ if either $ m<n_0<n $ or if $ n_0=m $ but $ u(m)\neq 0 $. Let $ u_{1,2} $ be two solutions of $ H u_{1,2}=\lambda_{1,2}u_{1,2} $, respectively. The Wronskian of $ u_1 $, $ u_2 $ is denoted by
    \[
        W(u_1,u_2)(n)=a(n)\left(u_1(n)u_2(n+1)-u_1(n+1)u_2(n)\right).
    \]A point $ n\in\Z $ is called a \text{node} of $ W(u_1,u_2) $ if either
    \[
        W(u_1,u_2)(n)=0 \text{ or }W(u_1,u_2)(n)W(u_1,u_2)(n+1)<0.
    \] 
    Denote by $ \sharp W(u_1,u_2) $ the total number of nodes of $ \sharp W(u_1,u_2) $ and by $ \sharp_{(m,n)}W(u_1,u_2) $ the number of nodes of $ W(u_1,u_2) $ between $ m $ and $ n $. More precisely, we shall say that a node $ n_0 $ of $ W(u_1,u_2) $ lies between $ m $ and $ n $ if either $ m<n_0<n $ or if $ n_0=m $ but $ W(u_1,u_2)(m)\neq 0 $. 
    Let $ \dim \mathrm{Ran}P_{(\lambda_1,\lambda_2)} (H) $ denote the dimension of the range of the spectral projection $ P_{(\lambda_1,\lambda_2)}(H) $. Then $ \dim \mathrm{Ran}P_{(\lambda_1,\lambda_2)} (H)<\infty $ means there are at most finitely many eigenvalues in $ (\lambda_1,\lambda_2) $.  Then the central results of renormalized oscillation theory are the following.

\begin{theorem}[\cite{Tes1996Oscillation,Tes1999Jacobi}]\label{Teschl1}
    We have that
    \[
           \dim \mathrm{Ran}P_{(-\infty,\lambda)}(H)<\infty
    \] 
    if and only if one solution of $ H u=\lambda u $ satisfies: $ u(n) $ has a finite number of nodes; and we have
    \[
           \dim \mathrm{Ran}P_{(\lambda,\infty)}(H)<\infty
    \] 
    if and only if one solution of $ H u=\lambda u $ satisfies: $ (-1)^{n}u(n) $ has a finite number of nodes.
\end{theorem}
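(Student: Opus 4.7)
My plan is to reduce the whole-line statement to two half-line oscillation results by Dirichlet decoupling. Imposing a Dirichlet boundary condition at the site $n=0$ realizes $H$ as a finite-rank (actually rank-two) self-adjoint perturbation of the orthogonal sum $H_{-}\oplus H_{+}$ on $\ell^{2}(\Z_{\le 0})\oplus \ell^{2}(\Z_{\ge 1})$; the bond $a(0)(|0\rangle\langle 1|+|1\rangle\langle 0|)$ is what is removed. By the min-max principle, the spectral projections
\[
\bigl|\dim\mathrm{Ran}\,P_{(-\infty,\lambda)}(H)-\dim\mathrm{Ran}\,P_{(-\infty,\lambda)}(H_{-}\oplus H_{+})\bigr|\le 2,
\]
so the finiteness of the left-hand side is equivalent to the joint finiteness of the two half-line counts.

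Next, I would invoke the classical half-line Sturm oscillation theorem for Jacobi matrices, which is the $n\ge 1$ or $n\le 0$ analog of the statement to be proved and is proved directly in \cite{Tes1999Jacobi}: the eigenvalues of $H_{+}$ strictly below $\lambda$ are in bijection with the nodes on $\Z_{\ge 1}$ of the unique (up to scalar) solution $\phi_{+}$ of $(H-\lambda)\phi_{+}=0$ satisfying $\phi_{+}(0)=0$, and similarly on the left. Concatenating $\phi_{-}$ and $\phi_{+}$ (which can be matched at $0$ by rescaling, using the three-term recursion $a(0)u_\lambda(1)+a(-1)u_\lambda(-1)=0$ coming from $u_\lambda(0)=0$) produces a single non-trivial global solution $u_{\lambda}$ of $Hu_{\lambda}=\lambda u_{\lambda}$ with $u_{\lambda}(0)=0$, whose total number of nodes on $\Z$ is finite exactly when the left-hand dimension is. This establishes the first equivalence for this distinguished solution.

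To upgrade from \emph{this} particular solution to \emph{any} non-trivial solution, I would use the fact that any two linearly independent solutions $u_{1},u_{2}$ of $Hu=\lambda u$ have a nonzero constant Wronskian $W(u_{1},u_{2})$. A discrete Sturm/Picone comparison then forces the generalized zeros of $u_{1}$ and $u_{2}$ to interlace, so their node counts on each half-line differ by at most one; in particular, finiteness of the node count is a property of the equation, not of the chosen solution. The statement about $(\lambda,\infty)$ is deduced from the first via the substitution $v(n)=(-1)^{n}u(n)$, a direct computation showing that it converts $Hu=\lambda u$ into $\widetilde{H}v=-\lambda v$, where $\widetilde{H}$ is the Jacobi operator with the same off-diagonal entries $a(n)$ but with $b(n)$ replaced by $-b(n)$. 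The map $u\mapsto (-1)^{n}u$ is implemented by a unitary $U$ with $U^{*}HU=-\widetilde{H}$, hence $\sigma(\widetilde{H})=-\sigma(H)$ and $\dim\mathrm{Ran}\,P_{(-\infty,-\lambda)}(\widetilde{H})=\dim\mathrm{Ran}\,P_{(\lambda,\infty)}(H)$. Applying the already-proved first part to $\widetilde{H}$ at energy $-\lambda$ then yields the desired second equivalence, since $\widetilde{H}$ retains the same $a(n)$ so the notion of node is the same for $v$ as for $u$.

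The principal technical obstacle is the careful handling of the \emph{generalized} definition of node used here, which counts both actual zeros and sign changes $u(n)u(n+1)<0$ (equivalently $a(n)u(n)u(n+1)>0$ since $a(n)<0$), rather than only honest zeros. The interlacing lemma and discrete Sturm comparison that underlie the solution-independence step must be verified for these generalized zeros; in the discrete setting this requires a case analysis based on the three-term recursion $a(n)u(n+1)=(\lambda+b(n))u(n)-a(n-1)u(n-1)$, and extra care is needed at sites where $u$ vanishes to track whether the node count is incremented. This combinatorial bookkeeping, together with the matching of $\phi_{-}$ and $\phi_{+}$ at the gluing site, is the main technical content of \cite{Tes1996Oscillation}, and is where a proof demands the most attention; the conceptual skeleton, however, is the half-line reduction outlined above.
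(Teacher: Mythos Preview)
The paper does not supply its own proof of this theorem; it is quoted from Teschl's work \cite{Tes1996Oscillation,Tes1999Jacobi} as background in the preliminaries, so there is no in-paper argument to compare against. Your outline is essentially the standard route taken in those references: Dirichlet decoupling plus finite-rank stability of spectral counting, half-line Sturm oscillation, interlacing of generalized zeros to make the node count solution-independent, and the reflection $u\mapsto(-1)^n u$ to pass from $(-\infty,\lambda)$ to $(\lambda,\infty)$.

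One point deserves tightening. With your decomposition, the Dirichlet solution $\phi_+$ for $H_+$ on $\ell^2(\Z_{\ge 1})$ is fixed by $\phi_+(0)=0$, while the Dirichlet solution $\phi_-$ for $H_-$ on $\ell^2(\Z_{\le 0})$ is fixed by $\phi_-(1)=0$. These are generically \emph{linearly independent} solutions of the full-line equation, so they cannot be rescaled to ``match at $0$'' into a single solution $u_\lambda$; the recursion $a(0)u_\lambda(1)+a(-1)u_\lambda(-1)=0$ that you invoke determines the unique extension of $\phi_+$ to the left, but that extension is not $\phi_-$. The fix is already in your toolkit: extend $\phi_+$ to a full-line solution, use it on the right where it coincides with $\phi_+$, and on the left compare its node count to that of $\phi_-$ via the interlacing lemma you cite, which bounds the discrepancy by a universal constant. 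With this correction the argument goes through.
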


\begin{theorem}[\cite{Tes1996Oscillation,Tes1999Jacobi}]\label{Teschl2}
    Let $ \lambda_1<\lambda_2 $ and suppose $ u_{1,2} $ satisfy $ Hu_{1,2}=\lambda_{1,2}u_{1,2} $, then
    \[
         \begin{aligned}
             \sharp W(u_1,u_2)<\infty &\Leftrightarrow \dim \mathrm{Ran}P_{(\lambda_1,\lambda_2)}(H)<\infty.\\
         \end{aligned}
    \] 
\end{theorem}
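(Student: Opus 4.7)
The plan is to reproduce Teschl's renormalized oscillation theorem. The guiding intuition is that sign changes of the Wronskian $W(u_1,u_2)$ between two solutions at different spectral parameters count exactly the eigenvalues of $H$ lying in the open interval $(\lambda_1,\lambda_2)$, generalizing classical Sturm oscillation theory (which corresponds to pushing one of the two energies below the bottom of the spectrum). Once such a quantitative count is available, the equivalence of finiteness statements claimed in the theorem follows immediately.

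First I would establish a sharp finite-interval version. Let $H_{[m,n]}$ denote the Dirichlet restriction of $H$ to $\ell^2(\{m,\ldots,n\})$. A direct use of $Hu_i=\lambda_i u_i$ yields the telescoping identity
\[
W(u_1,u_2)(n)-W(u_1,u_2)(m-1)=(\lambda_2-\lambda_1)\sum_{k=m}^{n}u_1(k)u_2(k).
\]
Introducing Pr\"ufer coordinates $u_i(k)=\rho_i(k)\sin\theta_i(k)$, $-a(k)u_i(k+1)=\rho_i(k)\cos\theta_i(k)$, one checks that a node of $W(u_1,u_2)$ corresponds to the difference angle $\theta_1-\theta_2$ crossing an integer multiple of $\pi$. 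The total winding of this difference on $[m,n]$ equals $\sharp_{(m,n)}W(u_1,u_2)$, and by the classical Sturm-type count for node numbers on a finite interval it coincides, up to a bounded boundary error, with $\dim\mathrm{Ran}\,P_{(\lambda_1,\lambda_2)}(H_{[m,n]})$.

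Next I would pass to the whole line by exhausting $\Z$ with intervals $[m_j,n_j]$, $m_j\to-\infty$ and $n_j\to+\infty$. The quantity $\sharp_{(m_j,n_j)}W(u_1,u_2)$ is monotone in $j$ and converges to $\sharp W(u_1,u_2)$. At the same time, $H$ differs from $H_{[m_j,n_j]}\oplus A_j$ (with $A_j$ any self-adjoint operator on the complement whose spectrum avoids $(\lambda_1,\lambda_2)$) by a rank-two boundary term, so the min-max principle yields
\[
\bigl|\dim\mathrm{Ran}\,P_{(\lambda_1,\lambda_2)}(H)-\dim\mathrm{Ran}\,P_{(\lambda_1,\lambda_2)}(H_{[m_j,n_j]})\bigr|\le 2
\]
uniformly in $j$. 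Combining this with the finite-interval identity of the previous step gives the desired equivalence.

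The main technical obstacle is controlling the $O(1)$ boundary discrepancy in the first step uniformly in the window. Since the statement concerns arbitrary solutions $u_{1,2}$ rather than $\ell^2$ eigenfunctions, one cannot appeal to orthogonality; instead, one must verify directly that the boundary contribution to the Pr\"ufer winding of $\theta_1-\theta_2$ is bounded independently of $m,n$. Once this uniform boundedness is established, the discrepancy between $\sharp_{(m_j,n_j)}W(u_1,u_2)$ and $\dim\mathrm{Ran}\,P_{(\lambda_1,\lambda_2)}(H_{[m_j,n_j]})$ remains bounded as $j\to\infty$, and combined with the rank-two stability of the eigenvalue count, the equivalence for the whole-line operator $H$ follows.
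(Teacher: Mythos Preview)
The paper does not prove this theorem; it is quoted without proof as a preliminary from Teschl's work \cite{Tes1996Oscillation,Tes1999Jacobi}, so there is no in-paper argument to compare against. Your sketch is broadly the approach Teschl uses: relate Wronskian nodes on finite windows to Dirichlet eigenvalue counts via Pr\"ufer variables, then pass to the whole line.

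One genuine imprecision should be fixed. You write that $H$ differs from $H_{[m_j,n_j]}\oplus A_j$ by a rank-two term ``with $A_j$ any self-adjoint operator on the complement whose spectrum avoids $(\lambda_1,\lambda_2)$''. That is false: the difference is finite rank only when $A_j$ is the actual Dirichlet restriction of $H$ to the complement, and that operator may well have spectrum in $(\lambda_1,\lambda_2)$. What the decoupling argument actually gives is the one-sided bound
\[
\dim\mathrm{Ran}\,P_{(\lambda_1,\lambda_2)}(H_{[m_j,n_j]})\le \dim\mathrm{Ran}\,P_{(\lambda_1,\lambda_2)}(H)+2,
\]
since the complementary pieces only add to the decoupled count. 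This suffices for the implication $\dim\mathrm{Ran}\,P_{(\lambda_1,\lambda_2)}(H)<\infty\Rightarrow\sharp W(u_1,u_2)<\infty$. For the converse you cannot use the same inequality; instead you need that $\dim\mathrm{Ran}\,P_{(\lambda_1,\lambda_2)}(H_{[m_j,n_j]})\to\infty$ whenever $\dim\mathrm{Ran}\,P_{(\lambda_1,\lambda_2)}(H)=\infty$, which follows from strong resolvent convergence (or a direct min--max argument with approximate eigenvectors). With this asymmetry made explicit, your outline matches Teschl's proof.
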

\begin{theorem}[\cite{Tes1996Oscillation,Tes1999Jacobi}]\label{Teschl3}
 Let $ \lambda_1\leq \lambda_2 $ and suppose $ u_{1,2} $ satisfy $ Hu_{1,2}=\lambda_{1,2}u_{1,2} $, then 
 \[
     |\sharp_{(n,m)}W(u_1,u_2)-(\sharp_{(n,m)}(u_2)-\sharp_{(n,m)}(u_1))|\leq 2.
 \] 
\end{theorem}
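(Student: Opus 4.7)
The plan is to reduce the inequality to a clean winding-number comparison via discrete Prüfer variables. Introduce $(\rho_j(n), \theta_j(n))$ with $\rho_j(n) > 0$ such that
\[
\begin{pmatrix} -a(n) u_j(n+1) \\ u_j(n) \end{pmatrix} = \rho_j(n) \begin{pmatrix} \cos \theta_j(n) \\ \sin \theta_j(n) \end{pmatrix},
\]
and lift $\theta_j$ to $\R$. With this parametrization the two node conditions on $u_j$ (the exact zero $u_j(n)=0$ and the sign condition $a(n)u_j(n)u_j(n+1)>0$) both translate into the statement that $\theta_j \bmod \pi$ crosses $0$ between $n$ and $n+1$. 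After checking that the one-step dynamics of $\theta_j$ respect a fixed orientation modulo $\pi$, one gets
\[
\sharp_{(n,m)}(u_j) = \left\lfloor (\theta_j(m) - \theta_j(n))/\pi \right\rfloor + \varepsilon_j, \qquad |\varepsilon_j| \leq 1,
\]
with the error supported at the two endpoints of the interval.

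Next, a direct computation expresses the Wronskian in Prüfer coordinates as
\[
W(u_1, u_2)(n) = \rho_1(n) \rho_2(n) \sin(\theta_2(n) - \theta_1(n)),
\]
so nodes of $W(u_1, u_2)$ correspond exactly to $\theta_2 - \theta_1$ crossing a multiple of $\pi$ between consecutive integers. This yields
\[
\sharp_{(n,m)} W(u_1, u_2) = \left\lfloor \tfrac{(\theta_2 - \theta_1)(m) - (\theta_2 - \theta_1)(n)}{\pi} \right\rfloor + \varepsilon_W, \qquad |\varepsilon_W| \leq 1.
\]
Combining this with the previous display and the telescoping identity $(\theta_2 - \theta_1)(m) - (\theta_2 - \theta_1)(n) = (\theta_2(m) - \theta_2(n)) - (\theta_1(m) - \theta_1(n))$, together with the elementary bound $|\lfloor a-b\rfloor - (\lfloor a\rfloor - \lfloor b\rfloor)| \leq 1$, would give a total endpoint error of at most $1 + 1 = 2$, which is precisely the bound asserted.

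The main obstacle, and the technical heart of the argument, is justifying the discrete Prüfer correspondence rigorously. One must verify that the two flavors of node (exact zero versus strict sign change across consecutive integers) really do correspond to a single unambiguous crossing of $\pi\Z$ by the lifted $\theta_j$, and that $\theta_2-\theta_1$ cannot cross $\pi\Z$ in both directions and thereby produce cancellations that would undermine the winding-number count. The crucial input here is the Wronskian identity
\[
W(u_1, u_2)(n) - W(u_1, u_2)(n-1) = (\lambda_2 - \lambda_1)\, u_1(n) u_2(n),
\]
with $\lambda_2 \geq \lambda_1$; it supplies the sign information that forces a consistent orientation of the crossings of $\theta_2-\theta_1$ through $\pi\Z$. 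The degenerate case $\lambda_1 = \lambda_2$ must be handled separately, but there the Wronskian is constant and the count $\sharp_{(n,m)} W$ is either $0$ or bounded by a single boundary contribution, so the inequality is trivial.
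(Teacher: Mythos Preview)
The paper does not prove this theorem; it is quoted from Teschl's work \cite{Tes1996Oscillation,Tes1999Jacobi} as a preliminary result, so there is no in-paper proof to compare against. Your Pr\"ufer-variable approach is precisely the method used in those references, and the identification $W(u_1,u_2)(n)=\rho_1(n)\rho_2(n)\sin(\theta_2(n)-\theta_1(n))$ together with the Wronskian difference identity is indeed the heart of the argument.

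One point to tighten: your final error accounting is too loose as written. Stacking three independent endpoint errors $|\varepsilon_1|,|\varepsilon_2|,|\varepsilon_W|\le 1$ on top of the floor-function bound $|\lfloor a-b\rfloor-(\lfloor a\rfloor-\lfloor b\rfloor)|\le 1$ would naively give a total deviation of up to $4$, not $2$. To obtain the sharp constant $2$ one needs Teschl's normalization of the discrete Pr\"ufer angle, under which $\sharp_{(n,m)}(u_j)$ is an \emph{exact} ceiling/floor expression in $\theta_j(n),\theta_j(m)$ (no residual $\varepsilon_j$), combined with a direct analysis of how the relative angle $\theta_2-\theta_1$ can behave at the two endpoints. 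The monotonicity input you cite from the Wronskian increment $(\lambda_2-\lambda_1)u_1(n)u_2(n)$ is correct and is what prevents extra crossings in the interior, but the constant $2$ comes entirely from the two boundary sites, and you should track the endpoint conventions explicitly rather than absorb them into generic $\pm 1$ errors.
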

For any $ \lambda\notin \sigma(H) $, there exist two linearly independent solutions $ u_{\pm}(\lambda) $ that are square-summable at $ \pm\infty $, respectively. If $ (\lambda_1,\lambda_2)\notin \sigma(H) $ with $ \lambda_1\text{ or }\lambda_2\in\sigma(H) $, then we can define $ u_{\pm}(\lambda_{1,2}) $ by a limit process. These solutions are called \textit{Weyl solutions} at $ \pm\infty $, respectively.
Then we have the following precise formula:
\begin{theorem}[\cite{Tes1999Jacobi,Tes1996Oscillation}]\label{Teschl4}
 Let $ \lambda_1<\lambda_2 $ and suppose $ [\lambda_1,\lambda_2]\cap\sigma_\mathrm{ess}(H)\subset\{\lambda_1,\lambda_2\} $. Let $ u_{\pm}(\lambda_1) $, $ u_{\pm}(\lambda_2) $ be the Weyl solutions of $ H $ at $ \lambda_1 $, $ \lambda_2 $ respectively, then
     \[
            \sharp W(u_{\pm}(\lambda_1),u_{\pm}(\lambda_2))= \dim \mathrm{Ran}P_{(\lambda_1,\lambda_2)}(H).
     \] 
\end{theorem}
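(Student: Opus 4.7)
The plan is to combine the renormalized oscillation identity (Theorem~\ref{Teschl3}) with a box-approximation argument and a $\lambda$-deformation step. First, both sides of the claimed identity are finite nonnegative integers: the hypothesis $[\lambda_1,\lambda_2]\cap\sigma_{\mathrm{ess}}(H)\subset\{\lambda_1,\lambda_2\}$ forces $(\lambda_1,\lambda_2)$ to consist of finitely many eigenvalues of finite multiplicity (making the right-hand side finite), and Theorem~\ref{Teschl2} then ensures that the left-hand side is finite as well. It therefore suffices to identify these two integers.

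The main step is to truncate the problem to a finite box $[n,m]$ with $n\ll 0\ll m$, equipping the restricted operator $H_{[n,m]}$ with boundary conditions that match the Weyl solutions at the respective ends. On such a box, classical Sturm oscillation theory gives that $\sharp_{(n,m)}(u_{\pm}(\lambda))$ counts, up to a boundary contribution bounded uniformly in $n$ and $m$, the number of box eigenvalues strictly below $\lambda$. Combining with Theorem~\ref{Teschl3} yields
\begin{equation*}
    \bigl|\,\sharp_{(n,m)}W(u_{\pm}(\lambda_1),u_{\pm}(\lambda_2))-\#\{\text{eigenvalues of } H_{[n,m]} \text{ in } (\lambda_1,\lambda_2]\}\,\bigr|\leq 2.
\end{equation*}
Letting $n\to -\infty$ and $m\to +\infty$, the $\ell^2$ decay of $u_{\pm}(\lambda_j)$ at its matching end ensures that the node counts and the Wronskian count stabilize at their full values, while strong resolvent convergence of $H_{[n,m]}$ to $H$ — together with the hypothesis that $\lambda_1,\lambda_2$ are not interior points of $\sigma_{\mathrm{ess}}(H)$ — guarantees that the number of box eigenvalues in $(\lambda_1,\lambda_2]$ converges to $\dim\mathrm{Ran}\,P_{(\lambda_1,\lambda_2)}(H)$.

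To upgrade the bound-by-$2$ to an exact equality, I would use the discrete Green's identity
\[
    W(u_1,u_2)(n)-W(u_1,u_2)(n-1)=(\lambda_2-\lambda_1)\,u_1(n)\,u_2(n),
\]
which shows that $W(u_{\pm}(\lambda_1),u_{\pm}(\lambda_2))(n)$ is eventually monotone as $n$ approaches the $\ell^2$-end, and hence produces no sign changes there. The remaining Wronskian zeros can then be tracked via a continuous deformation $\lambda\mapsto u_{\pm}(\lambda)$ on $[\lambda_1,\lambda_2]$: each time $\lambda$ crosses an eigenvalue of $H$ in $(\lambda_1,\lambda_2)$, exactly one new zero of $W(u_{\pm}(\lambda_1),u_{\pm}(\lambda))$ is created, and no zero appears otherwise. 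This yields the precise count claimed in the theorem.

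The main obstacle will be the deformation step: rigorously linking the creation/annihilation of Wronskian zeros to eigenvalue crossings requires a careful analysis of the $\lambda$-dynamics of the Wronskian, in particular ruling out pair creation or annihilation of zeros at non-eigenvalues. A further delicate point is the treatment of endpoints $\lambda_j$ that lie on the boundary of $\sigma_{\mathrm{ess}}(H)$, where the Weyl solutions are only defined via a limiting procedure and may fail to be $\ell^2$; in that case the stabilization argument at infinity has to be replaced by a continuity statement for the Green's function (or the $M$-function) up to the boundary of the essential spectrum.
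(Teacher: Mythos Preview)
The paper does not supply a proof of this statement: Theorem~\ref{Teschl4} is quoted verbatim from Teschl's work \cite{Tes1996Oscillation,Tes1999Jacobi} as a preliminary result, with no argument given here. There is therefore nothing in the present paper to compare your proposal against.

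For what it is worth, your sketch is broadly in the spirit of Teschl's original proof, which also proceeds via a $\lambda$-deformation and Pr\"ufer-angle tracking of the Wronskian; the box truncation you describe is a reasonable heuristic but is not the route taken there. If you wish to fill this in rigorously you should consult \cite[Chapter~4]{Tes1999Jacobi} directly, since the delicate points you flag (behavior at essential-spectrum endpoints, ruling out spurious zero creation) are exactly where the work lies and are handled carefully in that reference.
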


\section{Upper Bound and Deviation Estimates}\label{sec.3}



For the perturbed operator, we consider its corresponding eigenvalue equation,
$$ 
\widetilde{H}_{v,\alpha,\theta}u=Eu.
$$
We have $ \left(\begin{matrix}
    u_{n+1}\\u_n
\end{matrix}\right)=\widetilde{S}_E^v(\alpha,\theta,n)\cdot\left(\begin{matrix}
    u_n\\u_{n-1}
\end{matrix}\right)$,  
where 
\begin{equation} 
    \widetilde{S}_E^v(\alpha,\theta,n)=\begin{pmatrix}
        E-v(\theta+n\alpha)-g(n)&-1\\1&0
    \end{pmatrix}\in\mathrm{SL}(2,\R).
\end{equation} 

For any $ v,E,\alpha $ fixed, denoting $ A(\theta,n)=S_E^{v}(\alpha,\theta,n) $ and $ B(\theta,n)=\widetilde{S}^{v}_E(\alpha,\theta,n) $, we consider for any $ k\geq 1 $ the (unperturbed and perturbed) transfer matrices given by 
\begin{align}
    M_k(\alpha,\theta,E,n)&=A(\theta,n+k-1)\cdots A(\theta,n),\\
    \widetilde{M}_k(\alpha,\theta,E,n)&=B(\theta,n+k-1)\cdots B(\theta,n).
\end{align}      
We also denote $ M_{-k}(\alpha,\theta,E,n)=M_k(\alpha,\theta,E,n-k)^{-1} $, $ \widetilde{M}_{-k}(\alpha,\theta,E,n)=\widetilde{M}_k(\alpha,\theta,E,n-k)^{-1} $.
In the following subsections, we establish estimates for $  \widetilde{M}_k(\alpha,\theta,E,n)$, which will be the starting point of the proof of our main results.

\subsection{Telescoping Arguments}
By a telescoping argument, or the method of variation of constants, we first establish a relationship between $M_k(\alpha,\theta,E,n)$ and $  \widetilde{M}_k(\alpha,\theta,E,n)$.

\begin{lemma}\label{perturbedM_k}
    We have the following:
    \begin{align}
        \widetilde{M}_{k}&(\alpha,\theta,E,n) \notag\\
        &=M_k(\alpha,\theta,E,n)+\sum_{i=0}^{k-1}\widetilde{M}_{k-i-1}(\alpha,\theta,E,n+i+1)\begin{pmatrix}
            -g(n+i)&0\\
            0&0
        \end{pmatrix}M_{i}(\alpha,\theta,E,n)\tag{3.1a}\label{3.1a}\\
         &=M_k(\alpha,\theta,E,n)+\sum_{i=0}^{k-1}M_{k-i-1}(\alpha,\theta,E,n+i+1)\begin{pmatrix}
          - g(n+i)&0\\
            0&0
        \end{pmatrix}\widetilde{M}_{i}(\alpha,\theta,E,n)\tag{3.1b}\label{3.1b}\\
        \widetilde{M}_{k}^{-1}&(\alpha,\theta,E,n)\notag \\
        &=M_k^{-1}(\alpha,\theta,E,n)+\sum_{i=0}^{k-1}M^{-1}_{i}(\alpha,\theta,E,n)\begin{pmatrix}
            0&0\\
            0& - g(n+i)
        \end{pmatrix}\widetilde{M}^{-1}_{k-i-1}(\alpha,\theta,E,n+i+1)\tag{3.2a}\label{3.2a}\\
        &=M_k^{-1}(\alpha,\theta,E,n)+\sum_{i=0}^{k-1}\widetilde{M}^{-1}_{i}(\alpha,\theta,E,n)\begin{pmatrix}
            0&0\\
            0&- g(n+i)
        \end{pmatrix}M^{-1}_{k-i-1}(\alpha,\theta,E,n+i+1)\tag{3.2b}\label{3.2b}
    \end{align}

\end{lemma}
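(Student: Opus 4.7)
The four identities are all instances of the same telescoping scheme, differing only in which of the two products is placed on the left. The starting observation is the simple algebraic identity
\[
B(\theta,n)-A(\theta,n)=\begin{pmatrix}-g(n)&0\\0&0\end{pmatrix},\qquad
B^{-1}(\theta,n)-A^{-1}(\theta,n)=\begin{pmatrix}0&0\\0&-g(n)\end{pmatrix},
\]
which one checks by direct computation from the explicit forms of $A$ and $B$ (both are unimodular with the same $-1$ and $1$ entries, so the difference only affects the $(1,1)$ slot of $B-A$ and, dually, the $(2,2)$ slot of $B^{-1}-A^{-1}$).

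To obtain \eqref{3.1a}, the plan is to introduce the interpolating family
\[
T_i:=\widetilde{M}_{k-i}(\alpha,\theta,E,n+i)\,M_i(\alpha,\theta,E,n),\qquad i=0,1,\ldots,k,
\]
so that $T_0=\widetilde{M}_k(\alpha,\theta,E,n)$ and $T_k=M_k(\alpha,\theta,E,n)$. Using the cocycle identities $M_{i+1}(\alpha,\theta,E,n)=A(\theta,n+i)\,M_i(\alpha,\theta,E,n)$ and $\widetilde{M}_{k-i}(\alpha,\theta,E,n+i)=\widetilde{M}_{k-i-1}(\alpha,\theta,E,n+i+1)\,B(\theta,n+i)$, each increment collapses to
\[
T_{i+1}-T_i=\widetilde{M}_{k-i-1}(\alpha,\theta,E,n+i+1)\bigl[A(\theta,n+i)-B(\theta,n+i)\bigr]M_i(\alpha,\theta,E,n),
\]
and summing the telescope $T_k-T_0=\sum_{i=0}^{k-1}(T_{i+1}-T_i)$ yields \eqref{3.1a} after inserting the formula for $B-A$. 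For \eqref{3.1b}, one instead runs the telescope with $T_i:=M_{k-i}(\alpha,\theta,E,n+i)\,\widetilde{M}_i(\alpha,\theta,E,n)$; exactly the same computation, with the roles of $A$ and $B$ swapped, produces the other sum.

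The two identities \eqref{3.2a}--\eqref{3.2b} for the inverses are entirely analogous. Here the correct interpolants are
\[
T_i:=\widetilde{M}_i^{-1}(\alpha,\theta,E,n)\,M_{k-i}^{-1}(\alpha,\theta,E,n+i)\qquad\text{and}\qquad
T_i:=M_i^{-1}(\alpha,\theta,E,n)\,\widetilde{M}_{k-i}^{-1}(\alpha,\theta,E,n+i),
\]
and the multiplicative rules for inverses (e.g.\ $\widetilde{M}_{i+1}^{-1}(\alpha,\theta,E,n)=\widetilde{M}_i^{-1}(\alpha,\theta,E,n)B^{-1}(\theta,n+i)$) cause the increments to collapse to factors of $B^{-1}-A^{-1}$, which by the opening observation equals $\begin{pmatrix}0&0\\0&-g(n+i)\end{pmatrix}$; summing gives \eqref{3.2a} and \eqref{3.2b} respectively.

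There is really no obstacle here; the only genuine care is bookkeeping. One has to keep straight (i) the order of multiplication in $M_k$ versus $\widetilde M_k$ (the index in the product runs from $n+k-1$ down to $n$), (ii) the base point shift by $+i$ or $+i+1$ that appears in each telescoped factor, and (iii) the convention $\widetilde M_0=M_0=I$ so that the boundary values $T_0$, $T_k$ collapse correctly. Once these are set, the four identities follow from a single line of algebra each.
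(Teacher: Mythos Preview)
Your proof is correct and follows essentially the same telescoping argument as the paper. The paper expands the product $\widetilde{A}(\theta,n+k-1)\cdots\widetilde{A}(\theta,n)$ directly and replaces one $\widetilde{A}$ by $A$ at a time, which is exactly your interpolating family $T_i$ written out in longhand; your presentation via $T_i$ is a bit cleaner but the content is identical.
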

\begin{proof}
We only prove \eqref{3.1a}, the proofs of the other formulas are similar. 
   By the telescoping argument, we have
    \begin{align*}
      \widetilde{M}_{k} & (\alpha,\theta,E,n) = \widetilde{A}(\theta,n+k-1)\cdots\widetilde{A}(\theta,n)\\
        & = {\scriptsize\begin{pmatrix}
            E-v(\theta+(n+k-1)\alpha)-g(n+k-1)&-1\\
            1&0
        \end{pmatrix}\cdots\begin{pmatrix}
            E-v(\theta+n\alpha)-g(n)&-1\\
            1&0
        \end{pmatrix}}\\
        & =  A(\theta,n+k-1)\cdots A(\theta,n) + \\
        & \qquad\sum_{i=0}^{k-1}\widetilde{A}(\theta,n+k-1)\cdots\widetilde{A}(\theta,n+i+1)\cdot\\
        & \qquad\qquad \left(\widetilde{A}(\theta,n+i)-A(\theta,n+i)\right)A(\theta,n+i-1)\cdots A(\theta,n)\\
        & = M_k(\alpha,\theta,E,n)+\sum_{i=0}^{k-1}\widetilde{M}_{k-i-1}(\alpha,\theta,E,n+i+1)\begin{pmatrix}
            -g(n+i)&0\\
            0&0
        \end{pmatrix}M_{i}(\alpha,\theta,E,n),
    \end{align*}
and the result follows.
\end{proof}

\subsection{Upper Bound}
In this subsection, we establish a uniform upper bound for $  \widetilde{M}_k(\alpha,\theta,E,n)$.
\begin{lemma}\label{expnumerator}
For any $ s>0 $, any $ g $ satisfying 
$$ 
|g(n)|\leq \mathcal{C}e^{-s|n|}, 
$$
any $ E $ in a compact set $ I \subset \R$, and any $ \epsilon>0 $, there exists $ k_{v}'(\mathcal{C},s,\epsilon,\alpha) $ such that 
    $$ 
    \lVert\widetilde{M}_k(\alpha,\theta,E,n)\rVert<e^{(L(\alpha,E)+\epsilon)k} 
    $$
    for all $ k>k_{v}'(\mathcal{C},s,\epsilon,\alpha) $, all $ \theta $, all $ n $.
\end{lemma}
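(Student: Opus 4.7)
The plan is to convert the telescoping identity from Lemma~\ref{perturbedM_k} into a discrete Gronwall-type inequality and combine it with the standard uniform upper semi-continuity of the unperturbed Lyapunov exponent. First, I would take norms in identity \eqref{3.1b}, yielding
$$
\|\widetilde{M}_k(\alpha,\theta,E,n)\| \leq \|M_k(\alpha,\theta,E,n)\| + \sum_{i=0}^{k-1} \|M_{k-i-1}(\alpha,\theta,E,n+i+1)\|\cdot |g(n+i)|\cdot \|\widetilde{M}_i(\alpha,\theta,E,n)\|.
$$
This is the crucial reorganization: the perturbed norm on the left is controlled by unperturbed transfer matrices at the larger indices and perturbed ones only at smaller indices, which is what will make the Gronwall iteration close.

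Next, I would invoke the uniform upper semi-continuity of the Lyapunov exponent (Furman-type, following from subadditivity and unique ergodicity of the Diophantine translation). Combined with continuity of $L(\alpha,\cdot)$ on the compact energy set $I$, it gives a constant $C_1 = C_1(\epsilon,v,I,\alpha) \geq 1$ so that
$$
\|M_k(\alpha,\theta,E,n)\| \leq C_1\, e^{(L(\alpha,E)+\epsilon/2)k}, \qquad \forall\, \theta\in\T^d,\ n\in\Z,\ k\geq 0,\ E \in I.
$$
Setting $\beta_k := \|\widetilde{M}_k(\alpha,\theta,E,n)\|\,e^{-(L(\alpha,E)+\epsilon/2)k}$ and substituting into the previous display, I obtain the self-referential bound
$$
\beta_k \leq C_1 + C_1\, e^{-(L(\alpha,E)+\epsilon/2)}\sum_{i=0}^{k-1} |g(n+i)|\,\beta_i.
$$
The discrete Gronwall inequality then yields $\beta_k \leq C_1 \exp\!\left(C_1 e^{-(L+\epsilon/2)}\sum_{i\in\Z}|g(i)|\right)$. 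Since $g$ is exponentially decaying, $\sum_{i\in\Z}|g(i)| \leq 2\mathcal{C}/(1-e^{-s})$, so $\beta_k$ is bounded by a finite constant $D = D(\mathcal{C},s,\epsilon,v,I,\alpha)$ independent of $k,\theta,n$. Consequently $\|\widetilde{M}_k\| \leq D\, e^{(L(\alpha,E)+\epsilon/2)k}$, and choosing $k'_v$ so that $D < e^{\epsilon k/2}$ for $k > k'_v$ gives the desired estimate $\|\widetilde{M}_k\| < e^{(L(\alpha,E)+\epsilon)k}$.

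The main (mild) technical point is the second step: securing a bound $\|M_k\|\leq C_1 e^{(L(\alpha,E)+\epsilon/2)k}$ that is simultaneously uniform in $\theta$, $n$, and $E\in I$. Uniformity in $\theta,n$ is the content of Furman's theorem for the uniquely ergodic base dynamics $\theta\mapsto \theta+\alpha$; uniformity in $E$ is obtained by combining this with the continuity (and hence uniform continuity on the compact set $I$) of the map $E\mapsto L(\alpha,E)$. Everything else is a routine Gronwall computation exploiting only the summability of $g$, so the decay rate $s$ enters merely through the finiteness of $\sum |g(n)|$.
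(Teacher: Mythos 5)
Your proposal is correct and follows essentially the same route as the paper: take norms in the telescoping identity \eqref{3.1b}, establish the uniform bound on $\lVert M_k\rVert$ via Furman's theorem together with continuity of the Lyapunov exponent and compactness of $I$ (this is exactly the paper's Lemma~\ref{Upperb}), and close the estimate with a discrete Gronwall inequality using the summability of $g$. The only cosmetic difference is bookkeeping: you split $\epsilon$ into $\epsilon/2$ halves while the paper absorbs the multiplicative constant $C(\mathcal{C},s)e^{O(k_{v_0})}$ by enlarging $k_v'$ at the end.
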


\begin{proof}
First, we need to show that \eqref{M_k} can be bounded uniformly in $ \theta $, $ n $, and $ E $ in a compact set $ I $. We have the following lemma, which is essentially contained in \cite{avila2017sharp}. We give a proof for completeness.
\begin{lemma}\label{Upperb}
    For any $ \epsilon>0 $, there exist $ k_{v}(\epsilon,\alpha)<\infty $,  such that for any $ |k|>k_{v}(\epsilon,\alpha) $, we have
    $$
        \frac{1}{|k|}\log\lVert M_k(\alpha,\theta,E,n)\rVert\leq L(\alpha,E)+\epsilon
    $$ 
    uniformly in $ \theta $ and $ E $ in a compact set $ I $.
\end{lemma}
\begin{proof}
    We only consider the case where $ n $ is positive, the case of negative $n$ is similar. By Furman's result \cite{furman1997multiplicative}, for every $ E\in I $, and given $ \epsilon>0 $, there exists $ K=K(\epsilon,\alpha,E)>0 $, such that for any $ k>K $, we have
    $$
        \sup_{\theta\in\T^d}\frac{1}{|k|}\log\lVert M_k(\alpha,\theta,E,n)\rVert\leq L(\alpha,E)+\frac{\epsilon}{3}.
    $$ 
By the continuity of the Lyapunov exponent (cf. \cite{bourgain2005positivity}), there exists $ \delta=\delta(\epsilon,\alpha,E) $ such that if $ |E-E'|<\delta $, then 
    \begin{equation} \label{xxx}
        \sup_{\theta\in\T^d}\frac{1}{|k|}\log\lVert M_k(\alpha,\theta,E',n)\rVert\leq L(\alpha,E')+\frac{2\epsilon}{3},
    \end{equation} 
    holds for any $ K(\epsilon,\alpha,E)<k\leq 2K(\epsilon,\alpha,E)+1 $. By subadditivity, therefore (\ref{xxx}) holds for every $ k>K(\epsilon,\alpha,E) $.
    Since $ I $ is compact, by a compactness argument,  one can choose $ k_{v}=k_{v}(\epsilon,\alpha)<\infty $ such that for any $ |k|>k_{v}(\epsilon,\alpha) $, we have
    $$
        \sup_{\theta\in\T^d}\frac{1}{|k|}\log\lVert M_k(\alpha,\theta,E,n)\rVert\leq L(\alpha,E)+\epsilon,
    $$ 
    for all $E\in I$,
which completes the proof.
\end{proof}
By \eqref{3.1b}, we have 
    \begin{equation} \label{etismate1}
        \begin{aligned}
            \lVert \widetilde{M}_{k}(\alpha,\theta,E,n)\rVert\leq& \lVert M_k(\alpha,\theta,E,n)\rVert\\
            &+\sum_{i=0}^{k-1}|g(n+i)|\lVert M_{k-i-1}(\alpha,\theta,E,n+i+1)\rVert\lVert\widetilde{M}_{i}(\alpha,\theta,E,n)\rVert.
        \end{aligned}
    \end{equation} 
By Lemma \ref{Upperb}, for every $ E $ in a compact set $ I $, $ \epsilon>0 $, there exists $ k_{v}(\epsilon,\alpha) $ such that 
$$
    \lVert M_k(\alpha,\theta,E,n)\rVert\leq e^{(L(\alpha,E)+\epsilon)k+O(k_{v_0})}
$$ 
holds for all $ \theta $, all $ n $. Thus,
\begin{equation*}
    \begin{split}
        \lVert \widetilde{M}_{k}(\alpha,\theta,E,n)\rVert \leq &\quad e^{(L(\alpha,E)+\epsilon)k+O(k_{v_0})}\\
        &+\sum_{i=0}^{k-1}|g(n+i)|e^{(L(\alpha,E)+\epsilon)(k-i-1)+O(k_{v_0})}\lVert\widetilde{M}_{i}(\alpha,\theta,E,n)\rVert.
    \end{split}
\end{equation*}
Denoting
\begin{align*}
    c&=e^{-(L(\alpha,E)+\epsilon)+O(k_{v_0})}, \\
    y_k&= e^{-(L(\alpha,E)+\epsilon)k-O(k_{v_0})}\lVert \widetilde{M}_{k}(\alpha,\theta,E,n)\rVert,
\end{align*}
the bound \eqref{etismate1} can be rewritten as
\begin{equation} 
    y_k\leq 1+\sum_{i=0}^{k-1} c|g(n+i)|y_i.
\end{equation} 
By Gronwall's inequality, 
$$
    y_k\leq 1+\sum_{i=0}^{k-1}c|g(n+i)|\exp\left(\sum_{i<\ell<k}c|g(n+\ell)|\right)\leq C(\mathcal{C},s),
$$ 
that is 
$$
\lVert \widetilde{M}_{k}(\alpha,\theta,E,n)\rVert\leq C(\mathcal{C},s) e^{(L(\alpha,E)+\epsilon)k+O(k_{v_0})}.
$$ 
Thus there exists $ k_{v}'(\mathcal{C},s,\epsilon,\alpha) $ such that 
$$ 
\lVert\widetilde{M}_k(\alpha,\theta,E,n)\rVert<e^{(L(\alpha,E)+\epsilon)k} 
$$
for all $ k>k_{v}'(\mathcal{C} ,s,\epsilon,\alpha) $, all $ \theta $, all $ n $.
\end{proof}

\subsection{Deviation Estimates} As a consequence, we can obtain the following deviation estimates between $M_k(\alpha,\theta,E,n)$ and $  \widetilde{M}_k(\alpha,\theta,E,n)$.

\begin{lemma}\label{cor4}
    Assume that $ g $ is as in Lemma \ref{expnumerator}. For every $ E\in\R $, $ \epsilon>0 $, there exists $ k_{v}''(\mathcal{C},s,\epsilon,\alpha) $, such that for all $ \theta $ and any $ k>k_{v}'' $, 
    we have {\small
    \begin{align}
        \lVert\widetilde{M}_{k}(\alpha,\theta,E,n)-M_{k}(\alpha,\theta,E,n)\rVert &\leq e^{(L(E)+\epsilon)k-sn},& \text{ for } n&\geq 0,\tag{\ref{cor4}a}\label{4.3a}\\
        \lVert\widetilde{M}_{k}(\alpha,\theta,E,n)-M_{k}(\alpha,\theta,E,n)\rVert& \leq e^{(L(E)+\epsilon)k+s(n+k-1)},& \text{ for } n+k-1&\leq 0,\tag{\ref{cor4}b}\\
        \lVert\widetilde{M}_{- k}(\alpha,\theta,E,n)-M_{- k}(\alpha,\theta,E,n)\rVert&\leq e^{(L(E)+\epsilon)k-s(n-k)},&\text{ for } n-k&\geq 0,\tag{\ref{cor4}c}\\
        \lVert\widetilde{M}_{- k}(\alpha,\theta,E,n)-M_{- k}(\alpha,\theta,E,n)\rVert &\leq e^{(L(E)+\epsilon)k+s(n-1)},&\text{ for } n-1&\leq 0.\tag{\ref{cor4}d}
    \end{align}}
\end{lemma}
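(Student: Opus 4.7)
The plan is to derive all four deviation bounds from the telescoping identities in Lemma~\ref{perturbedM_k}, using the uniform upper bounds of Lemma~\ref{Upperb} and Lemma~\ref{expnumerator} on each factor and controlling the coefficient $|g(n+i)|\le \mathcal{C}e^{-s|n+i|}$ by an elementary geometric sum. Concretely, to prove \eqref{4.3a} I would start from \eqref{3.1a}, which yields the norm bound
\[
\bigl\lVert\widetilde M_k(\alpha,\theta,E,n)-M_k(\alpha,\theta,E,n)\bigr\rVert
\le \sum_{i=0}^{k-1}|g(n+i)|\,\bigl\lVert\widetilde M_{k-i-1}(\alpha,\theta,E,n+i+1)\bigr\rVert\,\bigl\lVert M_i(\alpha,\theta,E,n)\bigr\rVert.
\]
For $n\ge 0$ the index $n+i$ is nonnegative, so $|g(n+i)|\le \mathcal{C}e^{-sn}e^{-si}$. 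Applying Lemma~\ref{expnumerator} with $\epsilon/2$ to the $\widetilde M$ factor and Lemma~\ref{Upperb} with $\epsilon/2$ to the $M$ factor gives products bounded by $e^{(L(\alpha,E)+\epsilon/2)(k-1)}$, so the sum telescopes to
\[
\mathcal{C}\,e^{-sn}\,e^{(L(\alpha,E)+\epsilon/2)(k-1)}\sum_{i=0}^{k-1}e^{-si}
\;\le\;\frac{\mathcal{C}}{1-e^{-s}}\,e^{-sn}\,e^{(L(\alpha,E)+\epsilon/2)k},
\]
and the remaining constants (including the tails where $i$ or $k-i-1$ sits below the thresholds $k_v$, $k_v'$ and the matrices are only uniformly bounded on the compact energy set $I$) can be absorbed into an extra factor $e^{(\epsilon/2)k}$ once $k$ is large enough, yielding a threshold $k_v''(\mathcal{C},s,\epsilon,\alpha)$.

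The bound \eqref{4.3a}b is symmetric: when $n+k-1\le 0$ every index $n+i$ is nonpositive, so $|g(n+i)|\le \mathcal{C}e^{s(n+i)}$; factoring out $e^{s(n+k-1)}$ and summing $\sum_{j=0}^{k-1}e^{-sj}$ reproduces the estimate with the prefactor $e^{s(n+k-1)}$ in place of $e^{-sn}$. For the inverse-matrix statements \eqref{4.3a}c and \eqref{4.3a}d, I would use the identity $\widetilde M_{-k}(\alpha,\theta,E,n)-M_{-k}(\alpha,\theta,E,n)=\widetilde M_k^{-1}(\alpha,\theta,E,n-k)-M_k^{-1}(\alpha,\theta,E,n-k)$ together with formula \eqref{3.2a}. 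Because every matrix lies in $\mathrm{SL}(2,\R)$, $\|M^{-1}\|=\|M\|$ and $\|\widetilde M^{-1}\|=\|\widetilde M\|$, so Lemmas~\ref{Upperb} and~\ref{expnumerator} still apply. The summand weight is now $|g(n-k+i)|$; in case (c) we have $n-k+i\ge 0$, which after factoring out $e^{-s(n-k)}$ gives the same convergent geometric sum, whereas in case (d) we have $n-k+i\le n-1\le 0$ and factoring out $e^{s(n-1)}$ (the largest weight, attained at $i=k-1$) yields the matching bound.

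There is no real obstacle; the only subtlety is bookkeeping. The uniform upper bounds hold only for $k$ above the thresholds $k_v$ and $k_v'$, so for the boundary summands (where either $i$ or $k-i-1$ is small) the relevant matrix norms must be treated as $O(1)$ constants depending on $I$, $v$, and $\mathcal{C}$. These finitely many boundary contributions sum to an $O(1)$ multiple of $e^{(L(\alpha,E)+\epsilon/2)k}$ times the same position-dependent weight, and are absorbed together with $\mathcal{C}/(1-e^{-s})$ by replacing $\epsilon$ with $\epsilon/2$ throughout and then enlarging $k$ so that the constants are dominated by $e^{(\epsilon/2)k}$. This defines $k_v''(\mathcal{C},s,\epsilon,\alpha)$ and yields all four desired inequalities.
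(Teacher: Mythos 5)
Your proof is correct and follows essentially the same route as the paper: both start from the telescoping identities of Lemma~\ref{perturbedM_k}, bound the matrix factors via the uniform upper bounds of Lemma~\ref{Upperb} and Lemma~\ref{expnumerator}, and extract the position-dependent weight $e^{-sn}$ (etc.) from the geometric sum over $|g(n+i)|$. The only (harmless) difference is that you use \eqref{3.1a} and insert the already-established bound on $\lVert\widetilde M\rVert$ directly, whereas the paper uses \eqref{3.1b} and runs one more Gronwall iteration on the deviation itself; both yield the same threshold $k_{v}''(\mathcal{C},s,\epsilon,\alpha)$.
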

\begin{proof}
    We only prove \eqref{4.3a}, the proofs of the other estimates are similar. By \eqref{3.1b}, we have {\small
    \begin{equation} \label{etismate2}
        \begin{aligned}
            \lVert \widetilde{M}_{k}(\alpha,\theta,E,n)&-M_k(\alpha,\theta,E,n)\rVert\leq \sum_{i=0}^{k-1}|g(n+i)|\lVert M_{k-i-1}(\alpha,\theta,E,n+i+1)\rVert\lVert\widetilde{M}_{i}(\alpha,\theta,E,n)\rVert\\
            \leq &\sum_{i=0}^{k-1}|g(n+i)|\lVert M_{k-i-1}(\alpha,\theta,E,n+i+1)\rVert\lVert\widetilde{M}_{i}(\alpha,\theta,E,n)-M_i(\alpha,\theta,E,n)\rVert\\
            & +\sum_{i=0}^{k-1}|g(n+i)|\lVert M_{k-i-1}(\alpha,\theta,E,n+i+1)\rVert\lVert M_{i}(\alpha,\theta,E,n)\rVert.
        \end{aligned}
    \end{equation} }
    Similar to the proof of the previous lemma, for all $ \theta $, all $ n $, if we denote 
    \begin{align*}
        c&=e^{-(L(\alpha,E)+\epsilon)+O(k_{v_0})}, \\
        y_k&= e^{-(L(\alpha,E)+\epsilon)k-O(k_{v_0})}\lVert \widetilde{M}_{k}(\alpha,\theta,E,n)\rVert,
    \end{align*}
    then \eqref{etismate2} can be rewritten as 
\begin{equation} 
    y_k\leq c\sum_{i=0}^{k-1}|g(n+i)|+\sum_{i=0}^{k-1} c|g(n+i)|y_i.
\end{equation} 
By Gronwall's inequality, we have
$$
    y_k\leq c\sum_{i=0}^{k-1}|g(n+i)|+C(\mathcal{C},s)\sum_{i=0}^{k-1}c|g(n+i)|\exp\left(\sum_{i<\ell<k}c|g(n+\ell)|\right)\leq C'(\mathcal{C},s)e^{-sn},
$$ 
that is 
$$
\lVert \widetilde{M}_{k}(\alpha,\theta,E,n)-M_{k}(\alpha,\theta,E,n)\rVert\leq C'(\mathcal{C},s) e^{-sn}e^{(L(\alpha,E)+\epsilon)k+O(k_{v_0})}.
$$ 
Thus there exists $ k_{v}''(\mathcal{C},s,\epsilon,\alpha) $ such that 
$$ 
\lVert\widetilde{M}_k(\alpha,\theta,E,n)-M_k(\alpha,\theta,E,n)\rVert<e^{(L(\alpha,E)+\epsilon)k-sn} 
$$
for all $ k>k_{v}''(\mathcal{C} ,s,\epsilon,\alpha) $, all $ \theta $, all $ n $.
\end{proof}

\section{Anderson Localization}\label{sec.4}

In this section, we will prove the following Anderson localization result for the perturbed operator \eqref{pH}.
Denote 
$$ 
\mathcal{L}_\alpha(c_0):=\{E : L(\alpha,E)>c_0>0\}.
$$
By Shn\'ol's Theorem, it is enough to  show for almost all $ \alpha\in DC_d $, all generalized eigenvalues of $ \widetilde{H}_{v,\alpha,\theta_0} $ in $ \mathcal{L}_\alpha(c_0) $, have exponentially decaying eigenfunctions. 

The proof mainly follow the proof  in \cite{bourgain2004green}, we first introduce some useful notation. Denote
$$
    D_k(\alpha,\theta,n)={\begin{pmatrix}
        v(\theta+n\alpha) &1&&\\
        1 & v(\theta+(n+1)\alpha) &1&\\ 
        &&\ddots&&\\
        &&&1&v(\theta+(n+k-1)\alpha)
    \end{pmatrix}},
$$ 
and 
$$
    \widetilde{D}_k(\alpha,\theta,n)=D_k(\alpha,\theta,n)+
        \begin{pmatrix}
            g(n) &&&\\
            &g(n+1) &&\\ 
            &&\ddots&&\\
            &&&&g(n+k-1)
        \end{pmatrix}.
$$ 
Then we denote 
$$ 
    \widetilde{P}_k(\alpha,\theta,E,n)=\det[E-\widetilde{D}_k(\alpha,\theta,n)], \text{ for } k\geq 1,
$$ 
and set $ \widetilde{P}_0(\alpha,\theta,E,n)=1 $ and $ \widetilde{P}_{-1}(\alpha,\theta,E,n)=0 $.

The unperturbed operator, which corresponds to the case $ g\equiv 0 $, plays an important role in our argument. We denote the determinant that arises in this particular case by $ P_k(\alpha,\theta,E,n) $. 
Then it is easy to check that the transfer-matrix can be rewritten as
\begin{equation}\label{M_k}
    M_k(\alpha,\theta,E,n)=\begin{pmatrix}
        P_k(\alpha,\theta,E,n)&-P_{k-1}(\alpha,\theta,E,n+1)\\
        P_{k-1}(\alpha,\theta,E,n)&-P_{k-2}(\alpha,\theta,E,n+1)
    \end{pmatrix},
\end{equation}
and
\begin{equation}\label{MM_k}
    \widetilde{M}_k(\alpha,\theta,E,n)=\begin{pmatrix}
        \widetilde{P}_k(\alpha,\theta,E,n)&-\widetilde{P}_{k-1}(\alpha,\theta,E,n+1)\\
        \widetilde{P}_{k-1}(\alpha,\theta,E,n)&-\widetilde{P}_{k-2}(\alpha,\theta,E,n+1)
    \end{pmatrix}.
\end{equation}

We use the notation $ \widetilde{H}_{[N_1,N_2]}(\theta) $ for the operator $ \widetilde{H}_{v,\alpha,\theta} $ restricted to the interval $ [N_1,N_2] $ with zero boundary conditions at $ N_1-1 $ and $ N_2+1 $. Then we denote its Green's function as 
$$
G_{[N_1,N_2]}(E,\theta)(n_1,n_2)=(\widetilde{H}_{[N_1,N_2]}(\theta)-E)^{-1}(n_1,n_2),
$$
and Cramer's rule shows that for any $ N_1, N_2=N_1+N-1, N_1\leq n_1 \leq n_2\leq N_2 $, and $ E\notin\sigma(\widetilde{H}_{[N_1,N_2]}(\theta) ) $, we have
\begin{equation}\label{Greenfunction}
    |G_{[N_1,N_2]}(n_1,n_2)|=\frac{I_1}{I_2}
\end{equation} 
where 
\begin{equation}\label{I1I2}
    \begin{aligned}
        I_1&=|\widetilde{P}_{n_1-N_1}(\alpha,\theta,E,N_1)\widetilde{P}_{N_2-n_2}(\alpha,\theta,E,n_2+1)|, \\
        I_2&=|\widetilde{P}_N(\alpha,\theta,E,N_1)|.
    \end{aligned}
\end{equation} 
The following formula plays an elementary role in our proof:
\begin{lemma}
    Suppose $ n\in[N_1,N_2]\subset\Z $ and $ u $ is a solution of $ \widetilde{H}_{v,\alpha,\theta}u=Eu $. If $ E\notin\sigma(\widetilde{H}_{[N_1,N_2]}(\theta)) $, then
    $$
    u(n)=-G_{[N_1,N_2]}(E,\theta)(n,N_1)u(N_1-1)-G_{[N_1,N_2]}(E,\theta)(n,N_2)u(N_2+1).
    $$ 
\end{lemma}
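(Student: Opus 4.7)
The plan is the standard one for this Poisson-type formula: restrict $u$ to $[N_1,N_2]$, compute how the restricted operator $\widetilde{H}_{[N_1,N_2]}(\theta)$ fails to annihilate the restriction at the two boundary sites, and then invert using the hypothesis that $E$ lies in the resolvent set of $\widetilde{H}_{[N_1,N_2]}(\theta)$.

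First, I would set $\psi := u\restriction_{[N_1,N_2]}$ and recall that, by construction, $\widetilde{H}_{[N_1,N_2]}(\theta)$ agrees with $\widetilde{H}_{v,\alpha,\theta}$ on the interior $(N_1,N_2)$, while the two off-diagonal entries linking $N_1$ with $N_1-1$ and $N_2$ with $N_2+1$ have been dropped by the Dirichlet condition. Using the explicit action $(\widetilde{H}_{v,\alpha,\theta} u)(n) = u(n+1)+u(n-1)+[v(\theta+n\alpha)+g(n)]u(n)$ together with $\widetilde{H}_{v,\alpha,\theta}u = Eu$, one obtains $((\widetilde{H}_{[N_1,N_2]}(\theta)-E)\psi)(n)=0$ for $N_1<n<N_2$, whereas at $n=N_1$ the missing contribution from $u(N_1-1)$ gives $((\widetilde{H}_{[N_1,N_2]}(\theta)-E)\psi)(N_1) = -u(N_1-1)$, and similarly at $n=N_2$ one gets $-u(N_2+1)$. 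In other words,
\[
(\widetilde{H}_{[N_1,N_2]}(\theta)-E)\psi \;=\; -u(N_1-1)\,\delta_{N_1}\;-\;u(N_2+1)\,\delta_{N_2}.
\]

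Second, since $E\notin\sigma(\widetilde{H}_{[N_1,N_2]}(\theta))$, the operator $\widetilde{H}_{[N_1,N_2]}(\theta)-E$ is boundedly invertible on $\ell^2([N_1,N_2])$, and applying $G_{[N_1,N_2]}(E,\theta)=(\widetilde{H}_{[N_1,N_2]}(\theta)-E)^{-1}$ to the previous identity yields
\[
\psi(n) \;=\; -G_{[N_1,N_2]}(E,\theta)(n,N_1)\,u(N_1-1)\;-\;G_{[N_1,N_2]}(E,\theta)(n,N_2)\,u(N_2+1),
\]
which is the claimed formula since $\psi(n)=u(n)$ for $n\in[N_1,N_2]$.

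There is no substantive obstacle here; the entire argument is a routine verification once the discrepancy between $\widetilde{H}_{v,\alpha,\theta}$ and $\widetilde{H}_{[N_1,N_2]}(\theta)$ at the two endpoints is identified. The only point that requires care is the bookkeeping of the sign conventions and of which boundary neighbors are excised by the Dirichlet cut, which dictates the signs $-u(N_1-1)$ and $-u(N_2+1)$ of the source terms.
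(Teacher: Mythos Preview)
Your proof is correct; this is exactly the standard derivation of the Poisson formula via the boundary defect of the restricted Dirichlet operator. The paper states this lemma without proof, treating it as well known, so there is nothing further to compare.
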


\subsection{Green's Function Estimates}

All constants in what follows will depend on $ v $, but unless noted explicitly, we will leave this dependence implicit for simplicity. The main result of this section is the following:

\begin{proposition}\label{Prop7.1}
Suppose $I \subset \R$ is compact. Then, for any $ E\in \mathcal{L}_\alpha(c_0)\cap I $, there exists 
    $$
    \bar{N}_0=\bar{N}_0(c_0,\mathcal{C},s, \epsilon, \alpha)>0
    $$  
    such that if $ \alpha\in DC_d(\gamma,\tau) $, then for any $ N>\bar{N}_0 $ and any $ N'\geq N^2 $, the following holds:
    
    There is a set $ \Omega=\Omega(\alpha,E,N')\subset \T^d $, satisfying
    $$
        \meas \Omega<e^{-cN^{\sigma}}, \text{ for some }\sigma(\alpha)>0,\ c>0,
    $$ 
    such that for any $ \theta $ outside $ \Omega $, for any $ g $ satisfying $ |g(n)|\leq\mathcal{C}e^{-s|n|} $, one of the intervals
    $$
        \Lambda^R=[1,N]+N';[1,N-1]+N';[2,N]+N';[2,N-1]+N'
    $$ 
    and one of the intervals
    $$
        \Lambda^L=[-N,-1]-N';[-N+1,-1]-N';[-N,-2]-N';[-N+1,-2]-N'
    $$ 
    with both choices being independent of $ g $, will satisfy
    $$
        \max_{i\in\{L,R\}}|G_{\Lambda^{i}}(E,\theta)(n_1,n_2)|<e^{-L(\alpha,E)|n_1-n_2|+6\epsilon N }.
    $$ 
\end{proposition}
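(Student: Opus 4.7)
My plan is to bound the Green's function via Cramer's rule \eqref{Greenfunction}--\eqref{I1I2}, using Lemma~\ref{expnumerator} for the numerator $I_1$ and combining a Bourgain-Goldstein large deviation theorem (LDT) for the unperturbed transfer matrix with the deviation estimate in Lemma~\ref{cor4} for the denominator $I_2$. Write $L := L(\alpha, E)$ and fix a small $\epsilon > 0$. The numerator is immediate: Lemma~\ref{expnumerator} yields
$$
I_1 = |\widetilde{P}_{n_1 - N_1}(\theta, N_1)| \cdot |\widetilde{P}_{N_2 - n_2}(\theta, n_2 + 1)| \leq e^{(L + \epsilon)(N - 1 - (n_2 - n_1))}
$$
uniformly in $\theta$, $g$, and $\Lambda \in \{\Lambda^R, \Lambda^L\}$, once $N$ exceeds the threshold of that lemma.

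The crux is a matching lower bound on $I_2$. The Bourgain-Goldstein LDT (cf.\ \cite{bourgain2004green}) gives $\sigma(\alpha) > 0$ and a set $\Omega_1 \subset \T^d$ with $\meas \Omega_1 \leq e^{-cN^\sigma}$ such that $\|M_N(\alpha, \theta, E, 0)\| \geq e^{(L - \epsilon)N}$ for $\theta \notin \Omega_1$. The matrix identity \eqref{M_k} identifies the four entries of $M_N(\alpha, \theta, E, N' + 1)$ precisely with the four unperturbed counterparts of the $I_2$'s associated to the four choices of $\Lambda^R$, so $\det M_N = 1$ forces at least one such entry to have absolute value $\geq \tfrac{1}{2} e^{(L - \epsilon)N}$, with the winning entry determined by $(\theta, \alpha, E, N, N')$ alone. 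Lemma~\ref{cor4} now transfers this to $\widetilde{P}$: each entry of $\widetilde{M}_N(\alpha, \theta, E, N'+1)$ differs from its unperturbed counterpart by at most $e^{(L+\epsilon)N - s N'} \leq e^{(L+\epsilon)N - s N^2}$, which is negligible compared to $\tfrac{1}{2} e^{(L-\epsilon)N}$ as soon as $\bar{N}_0$ is taken large enough that $s N^2 > (2L + \epsilon)N$. The analogue for $\Lambda^L$ follows from the negative-$n$ cases of Lemma~\ref{cor4}, using the exponential decay of $g$ at $-\infty$. Letting $\Omega$ be the union of the (measure-preserving) translates of $\Omega_1$ by the relevant base points keeps $\meas \Omega \leq e^{-c N^\sigma}$ after mildly shrinking $c$.

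Combining via \eqref{Greenfunction} now yields, for the selected $\Lambda$ and any $N_1 \leq n_1 \leq n_2 \leq N_2$,
$$
|G_\Lambda(E, \theta)(n_1, n_2)| \leq \frac{I_1}{I_2} \leq 2 \, e^{(L + \epsilon)(N - 1 - (n_2 - n_1)) - (L - \epsilon)N} \leq e^{-L |n_1 - n_2| + 6\epsilon N}
$$
for $N \geq \bar{N}_0$, the $6\epsilon N$ slack comfortably absorbing constants and the $2\epsilon N$ mismatch between the upper and lower exponents. The main obstacle I anticipate is architectural rather than analytic: both $\Omega$ and the winning interval must depend only on $(\theta, \alpha, E, N, N')$ and \emph{not} on $g$, since the conclusion demands a single bad set and a single selection that work simultaneously for every admissible $g$. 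This is exactly what the four-interval redundancy buys: a $g$-dependent near-eigenvalue of $\widetilde{H}_\Lambda(\theta)$ would otherwise force $\Omega$ to depend on $g$, but a unimodular $2 \times 2$ matrix cannot have all four entries small, so the selection is made intrinsically from the unperturbed $M_N$ and transferred uniformly to $\widetilde{M}_N$ via Lemma~\ref{cor4}, thanks to the gap $N' \geq N^2$ between the base point and the interval length.
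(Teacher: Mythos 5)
Your proposal is correct and follows the paper's proof closely: Cramer's rule plus Lemma~\ref{expnumerator} for the numerator, the Bourgain--Goldstein LDT (with translation to base point $N'+1$) plus the deviation estimate of Lemma~\ref{cor4} for the denominator, with the four-interval redundancy matching the four entries of the transfer matrix. You also correctly identify and make explicit the step the paper dispatches with ``clearly'': the winning interval must be selected from the \emph{unperturbed} $M_N(\alpha,\theta,E,N'+1)$, which does not depend on $g$, and the deviation estimate (using $N' \ge N^2$) transfers the lower bound to the corresponding entry of $\widetilde{M}_N$ uniformly over all admissible $g$. One slip in the narration: you attribute the bound ``one entry $\ge \tfrac{1}{2}e^{(L-\epsilon)N}$'' to $\det M_N = 1$ and say ``a unimodular $2\times 2$ matrix cannot have all four entries small.'' Unimodularity alone gives only an $O(1)$ lower bound on the largest entry; what you actually need (and state in the preceding sentence) is the LDT norm bound $\lVert M_N \rVert \ge e^{(L-\epsilon)N}$ together with the elementary $2\times 2$ fact $\max_{i,j}|(M_N)_{ij}| \ge \tfrac{1}{2}\lVert M_N\rVert$. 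This misattribution does not affect the validity of the argument, but the determinant plays no role in the entry selection.
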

\begin{proof}
    We consider the case $ \Lambda^R $ first. For any $ \epsilon>0 $, by Lemma \ref{expnumerator}, for any interval $ \Lambda=[m+1,m+N] $ with length $ N\geq k_{v}'(\mathcal{C},s,\epsilon,\alpha) $, we have 
    \begin{equation}\label{G_Lambda}
        |G_{\Lambda}(E,\theta)(n_1,n_2)|<\frac{e^{(N-|n_1-n_2|)L(\alpha,E)+2\epsilon N+O(k_{v}')}}{|\widetilde{P}_N(\alpha,\theta,E,m+1)|}
    \end{equation} 
    for any $ g $ satisfying $ |g(n)|\leq\mathcal{C}e^{-s|n|} $.

    To get a lower bound for the denominator, we need the following Large Deviation Theorem (LDT) from Bourgain \cite{bourgain2004green}:
    \begin{theorem}[\cite{bourgain2004green}]
    Assume that $ \alpha\in\T^d $ satisfies the Diophantine condition $ DC_d(\gamma,\tau) $
    $$
        |k\alpha|_{\T^d}\geq \frac{\gamma}{|k|^{\tau}} \text{ for } k\in \Z^d\backslash\{0\}.
    $$ 
    Let $ v $ be real analytic on $ \T^d $. Fixing a small $ \epsilon>0 $ and taking $ N>N_0(\epsilon,\alpha) $, we have
    \begin{equation}\label{LDTmeas}
        \meas \left\{ \theta\in\T^d;\left|\frac{1}{N}\log\lVert M_N(\alpha,\theta,E,0)\rVert-L_N(\alpha,E)\right|\geq \epsilon \right\}\leq e^{-cN^\sigma},
    \end{equation} 
    for some constants $ \sigma=\sigma(\tau)>0 $, $ c=c(E)>0 $.
    \end{theorem}
    \begin{remark}
        $c=c(E)$ could be uniform for $ E $ in a bounded range, such as $ |E|\leq C\lVert v\rVert_0 $. Thus one can drop the dependence on $ E $.
    \end{remark}
    
From \eqref{LDTmeas} and translation invariance, it follows that if $ N>N_0(\epsilon,\alpha) $,
    \begin{equation}\label{LDTmeas'}
        \meas \left\{\theta\in\T^d;\left|\frac{1}{N}\log\lVert M_N(\alpha,\theta,E,N'+1)\rVert-L_N(\alpha,E)\right|\geq \epsilon \right\}\leq e^{-cN^\sigma},
    \end{equation}  
    Denote this set, which depends on $ \alpha,E,N' $, by $ \Omega^R $.
    By Corollary \ref{cor4}, we have if $ N>k_{v}''(\mathcal{C},s,\epsilon,\alpha) $, and large enough (depends on $ s $ but does not depend on $ E $ in a compact set), we have 
    \begin{equation}\label{Mn-Mn}
        \left\lvert\lVert\widetilde{M}_N(\alpha,\theta,E,N'+1)\rVert-\lVert M_N(\alpha,\theta,E,N'+1)\rVert\right\rvert\leq e^{(L(\alpha,E)+\epsilon-sN)N}<\tfrac{1}{2}
    \end{equation} 
    for any $ g $ satisfying $ |g(n)|\leq\mathcal{C}e^{-s|n|} $.
    Thus for any $ \theta\notin\Omega^R $, if $ N>\epsilon^{-1} $, then we have 
    $$
        \lVert\widetilde{M}_N(\alpha,\theta,E,N'+1)\rVert\geq e^{(L_N(\alpha,E)-\epsilon)N}-\tfrac{1}{2}\geq e^{(L(\alpha,E)-2\epsilon)N}.
    $$
    Clearly, the choice of $ \Lambda^R $ can be made independently of $ g $.
     
    Now we fix this $ \Lambda^R $. Let $ \bar{N}_0>\max\{N_0,k_{v}'^2,k_{v}'',\epsilon^{-1}\} $ and large enough, for any $ N>\bar{N}_0 $, then for any $ g $ satisfying $ |g(n)|\leq\mathcal{C}e^{-s|n|} $, the denominator in \eqref{G_Lambda} will be bounded from below by $ e^{(L(\alpha,E)-2\epsilon)N} $. Thus we have
    $$
    |G_{\Lambda^R}(E,\theta)(n_1,n_2)|<e^{-L(\alpha,E)|n_1-n_2|+4\epsilon N+O(k_{v}')}<e^{-L(\alpha,E)|n_1-n_2|+6\epsilon N}.
    $$ 
    The $ \Lambda^L $ case can be handled in a similar way by excluding a set $ \Omega^L $. Letting $ \Omega=\Omega^L\cup\Omega^R  $, we may conclude the proof. 
\end{proof}

\subsection{Proof of Theorem \ref{mainresult}}

\begin{proof}
      
    Let $ \epsilon=\tfrac{c_0}{1000} $. Fix $ \theta_0\in\T^d $, $ g $, and a generalized eigenvalue $E$ of $ \widetilde{H}_{v,\alpha,\theta_0} $ with a normalized generalized eigenfunction $ u=(u_n)_{n\in\Z}$, that is,\footnote{If the polynomially bounded solution happens to vanish at $0$, one can use $1$ as a reference site.}
$$
\widetilde{H}_{v,\alpha,\theta_0}u=Eu, \text{ with }|u_n|\leq C_E(1+|n|) \text{ and } |u_0|=1.
$$ 
First, we have the following lemma:
\begin{lemma}\label{lemdistEE'}
    For any $ N_0 $ sufficiently large, there is some $ j_0 $, $ |j_0|\leq 2N_1=2N_0^C $ where $ C $ a sufficiently large constant, such that 
    \begin{equation}\label{distEE'}
        0<\mathrm{dist} (E,\mathrm{spec}\widetilde{H}_{[-j_0+1,j_0-1]}(\theta_0))<e^{-\frac{c_0}{4}N_0}.
    \end{equation} 
\end{lemma}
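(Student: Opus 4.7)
The plan is to combine a variational bound for the restricted operator with an alignment argument driven by the uniform transfer-matrix estimate of Lemma~\ref{expnumerator}. Setting $\Lambda_j = [-j+1, j-1]$ and $\phi_j = u|_{\Lambda_j}$, where $u$ is the generalized eigenfunction with $|u(0)|=1$ and $|u(n)|\leq C_E(1+|n|)$, a direct computation using $\widetilde H_{v,\alpha,\theta_0}u=Eu$ together with the Dirichlet boundary conditions defining $\widetilde H_{\Lambda_j}(\theta_0)$ gives
$$(\widetilde H_{\Lambda_j}(\theta_0) - E)\phi_j = -u(-j)\,\delta_{-j+1} - u(j)\,\delta_{j-1}.$$
Since $\|\phi_j\|\geq |u(0)|=1$, the variational principle yields $\operatorname{dist}(E,\sigma(\widetilde H_{\Lambda_j}(\theta_0))) \leq \sqrt{|u(-j)|^2 + |u(j)|^2}$. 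So it suffices to produce some $j$ in the range $[N_0, 2N_1]$ (with $N_1 = N_0^C$) at which $|u(\pm j)|$ is exponentially small in $N_0$.

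To this end, I would exploit that $\widetilde M_k \in \mathrm{SL}(2,\R)$ together with Lemma~\ref{expnumerator} ($\|\widetilde M_k\| \leq e^{(L+\varepsilon)k}$) constrains the singular values of $\widetilde M_k$ to the range $[e^{-(L+\varepsilon)k}, e^{(L+\varepsilon)k}]$, yielding contracting and expanding singular directions $v_s^{(k)}$ and $v_u^{(k)}$. Applying this at the far end of the range $k\sim 2N_1$, the identity $(u(k+1), u(k))^T = \widetilde M_k(\alpha,\theta_0,E,1)\cdot(u(1),u(0))^T$ combined with polynomial boundedness of the left-hand side forces the component of $(u(1),u(0))^T$ along $v_u^{(k)}$ to be at most $(1+k)\,e^{-(L-\varepsilon)k}$, hence exponentially small. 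One then shows that the directions $v_s^{(k)}$ are Cauchy in $k$ at an exponential rate, so $(u(1),u(0))^T$ is exponentially aligned with a common stable direction across scales. Iterating at intermediate scales $j\in [N_0, N_1]$ produces $|u(\pm j)|\leq C\,e^{-(L-2\varepsilon)j}$; choosing $\varepsilon = c_0/1000$ and using $L \geq c_0$, this yields $|u(\pm j)|\leq e^{-c_0 j/2}$ for $j$ large, and the variational bound gives
$$\operatorname{dist}\!\bigl(E,\sigma(\widetilde H_{\Lambda_j}(\theta_0))\bigr) \leq \sqrt{2}\,e^{-c_0 j/2} < e^{-c_0 N_0/4}$$
for any $j\in[N_0, N_1]$ with $N_0$ sufficiently large.

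Finally, to upgrade to the strict inequality $0 < \operatorname{dist}$, I note that the set $\{j : E\in\sigma(\widetilde H_{\Lambda_j}(\theta_0))\}$ is at most countable (each truncation has a discrete spectrum, and $E$ is fixed), so I can choose $j_0$ in $[N_0, 2N_1]$ — which contains $\gtrsim N_0^C$ integers — while avoiding this exceptional set. The \textbf{main obstacle} is the alignment argument in the middle step: the upper bound on $\|\widetilde M_k\|$ alone does not make $\widetilde M_k$ genuinely hyperbolic at the fixed $\theta_0$, so extracting a well-defined stable direction and controlling its exponential convergence across scales is the technical heart. The backward direction $|u(-j)|$ requires the analogous treatment for the inverse cocycle, and the eventual compatibility of the forward and backward stable directions at $\theta_0$ is exactly the mechanism that will promote the generalized eigenvalue $E$ to a genuine $\ell^2$-eigenvalue in the subsequent proof of Theorem~\ref{mainresult}.
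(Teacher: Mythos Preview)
Your variational identity $(\widetilde H_{\Lambda_j}(\theta_0)-E)\phi_j=-u(-j)\delta_{-j+1}-u(j)\delta_{j-1}$ and the resulting bound $\mathrm{dist}(E,\sigma(\widetilde H_{\Lambda_j}(\theta_0)))\leq\sqrt{|u(-j)|^2+|u(j)|^2}$ is exactly equivalent to the paper's Step~3 (which phrases it via $\|G_{[-j_0+1,j_0-1]}\|$). The disagreement is entirely in how one manufactures an index $j_0$ with $|u(\pm j_0)|$ small.

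There is a genuine gap in your middle step, and you have essentially diagnosed it yourself. From Lemma~\ref{expnumerator} you only have the \emph{upper} bound $\|\widetilde M_k(\alpha,\theta_0,E,1)\|\leq e^{(L+\varepsilon)k}$. To conclude that the component of $(u(1),u(0))^T$ along $v_u^{(k)}$ is $\lesssim (1+k)e^{-(L-\varepsilon)k}$ you need the top singular value to satisfy $\sigma_1(k)\geq e^{(L-\varepsilon)k}$, i.e.\ a \emph{lower} bound on the norm at the specific point $\theta_0$. Nothing you have cited provides this: positivity of $L(\alpha,E)$ is an average statement, and for a fixed phase the transfer matrices may well fail to grow. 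Without that lower bound there is no contracting direction, no exponential alignment of the $v_s^{(k)}$, and the whole Oseledets-style argument collapses. In fact your claimed output, $|u(\pm j)|\leq e^{-c_0 j/2}$ for all $j\in[N_0,N_1]$ and all large $N_0$, would already give $u\in\ell^2$ and prove Anderson localization outright; the lemma would then be pointless as an intermediate step.

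The paper closes this gap by a completely different mechanism. The needed lower bound on $\|M_N\|$ holds not at $\theta_0$ but off a small set $\Omega\subset\T^d$ of measure $\leq e^{-cN_0^\sigma}$ (this is the Large Deviation Theorem, fed through Proposition~\ref{Prop7.1} as a Green's function estimate). The paper then shows (Step~1) that $\Omega$ may be taken semi-algebraic of controlled degree and invokes Lemma~\ref{cor9.7inB} to conclude that the orbit $\{\theta_0+j\alpha:0\leq j\leq N_1\}$ hits $\Omega$ at most $N_1^{1-\delta}$ times. For the remaining $j$, the Green's function decay on a shifted interval forces $|u_{N_0^2+N_0/2+j}|<e^{-c_0N_0/4}$ (and symmetrically on the left), and a pigeonhole argument produces $j_0$ with both $|u_{\pm j_0}|$ small. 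The semi-algebraic/recurrence step (Step~1 and the use of Lemma~\ref{cor9.7inB}) is the missing idea in your proposal: it is what converts ``norm is large for most $\theta$'' into ``norm is large at most points of the specific orbit through $\theta_0$''.
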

\begin{proof}
    The proof of this lemma will be given in three steps.

    \noindent 
    \textbf{Step 1:}

    The properties of semialgebraic sets play a role in the Bourgain's approach \cite{bourgain2004green}. First, we need to show that the exceptional set could also be assumed semialgebraic. Following Bourgain's idea, one can shrink the exceptional set such that it is seen to be semialgebraic.
    \begin{claim}
        Fixing $ \alpha $ and $ N>\bar{N}_0 $ sufficiently large, the exceptional set $ \Omega $ in Proposition~\ref{Prop7.1} may be assumed to be semialgebraic of degree $ <N^3 $ and still independent of the choice of $ g $ satisfying $ |g(n)|\leq\mathcal{C}e^{-s|n|} $.
    \end{claim}
    \begin{proof}
    Fix $ \Lambda=\Lambda^{i}=[a,b] $ with $ i\in\{L,R\} $ and $ g $, we say that the pair $ (\theta,E) $ has the property ``$ g $-P '' if
\begin{equation} \label{gP}
    |G_\Lambda(E,\theta)(n_1,n_2)|<e^{-c_0|n_1-n_2|+6\epsilon N}, \text{ for all } n_1,n_2\in\Lambda.
\end{equation} 
Rewrite (\ref{gP}) as 
\begin{equation} \label{P}
    e^{2c_0|n_1-n_2|}[\det(\widetilde{H}_\Lambda-E)_{n_1,n_2}]^2\leq e^{2(6\epsilon N)}[\det(\widetilde{H}_\Lambda-E)]^2 , \text{ for all } n_1,n_2\in\Lambda,
\end{equation} 
where $ A_{n_1,n_2} $ denotes the $ (n_1,n_2) $-minor of the matrix $ A $. 

Observing that 
$$
\begin{aligned}
    |\det(\widetilde{H}_\Lambda-E)_{n_1,n_2}|&=|\widetilde{P}_{n_1-a}(\alpha,\theta,E,a)\widetilde{P}_{b-n_2}(\alpha,\theta,E,n_2+1)|\\
&\leq \lVert \widetilde{M}_{n_1-a}(\alpha,\theta,E,a)\rVert\cdot\lVert\widetilde{M}_{b-n_2}(\alpha,\theta,E,n_2+1)\rVert,
\end{aligned}
$$ 
one can replace \eqref{P} by the slightly stronger condition
\begin{equation} \label{P1}
    \sum_{n_1,n_2\in\Lambda}e^{2c_0|n_1-n_2|}\lVert \widetilde{M}_{n_1-a}(\alpha,\theta,E,a)\rVert^2\lVert \widetilde{M}_{b-n_2}(\alpha,\theta,E,n_2+1)\rVert^2\leq e^{2(6\epsilon N)}\left[\det(\widetilde{H}_\Lambda-E)\right]^2.
\end{equation}  
Since $ v_0 $ is analytic, we have $ v(\theta)=\sum_{k\in\Z^d}\hat{v}(k)e^{\mathrm{i}\langle k,\theta\rangle} $, $ |\hat{v}(k)|<e^{-\rho|k|} $, and we may substitute $ v $ by $ v_1=\sum_{|k|<C_1N}\hat{v}(k)e^{\mathrm{i}\langle k,\theta\rangle} $ where $ C_1=C_1(v) $ is a sufficiently large constant if we regard the deviation of $ v $ and $ v_1 $ as a perturbation. In fact, since $ \lVert v-v_1\rVert_{C^0}<e^{-C_1\rho N} $, choose $ C_1 $ sufficiently large such that 
$$
    C_1\rho N\gg C'(v,E).
$$   
If we use $ *^{\mathcal{T}}$ to denote quantities arising after this Fourier truncation, then by \eqref{3.1a}, a direct calculation shows that for any $ n\in\Lambda $, and $ k\leq N $,
$$
    \begin{aligned}
        |\widetilde{P}_k(\alpha,\theta,E,n)-P^{\mathcal{T}}_k(\alpha,\theta,E,n)|&\leq \sum_{j=0}^{k-1}\left(e^{-C_1 \rho N}+\mathcal{C}e^{-s(N^2+j)}\right)e^{C'(v,E)k}\\
        &\leq e^{C'(v,E)k}N (e^{-C_1\rho N}+\mathcal{C}e^{-sN^2}) \\
        &\leq e^{-\frac{1}{2}C_1\rho N}e^{C'(v,E)k}\\
        &\leq e^{-\frac{1}{3}C_1\rho N}
    \end{aligned}
$$ 
for any $ g $ satisfies $ |g(n)|\leq\mathcal{C}e^{-s|n|} $. 

Thus one can deduce the following estimates easily:
$$
    \begin{aligned}
        &|[\det(\widetilde{H}_\Lambda-E)]^2-[\det(H^{\mathcal{T}}_\Lambda-E)]^2|\leq e^{-\frac{1}{4}C_1\rho N},\\    
        &|\lVert \widetilde{M}_{n_1-a}(\alpha,\theta,E,a)\rVert^2\lVert\widetilde{M}_{b-n_2}(\alpha,\theta,E,n_2+1)\rVert^2\\
        &\qquad\qquad\qquad-\lVert M^{\mathcal{T}}_{n_1-a}(\alpha,\theta,E,a)\rVert^2\lVert M^{\mathcal{T}}_{b-n_2}(\alpha,\theta,E,n_2+1)\rVert^2|\\
        &\qquad\qquad\qquad\qquad\qquad\qquad\qquad\quad\;\leq e^{-\frac{1}{4}C_1\rho N}.
    \end{aligned}
$$ 
\begin{lemma}\label{abN}
    Suppose that $ \left\lvert|a|-|a'|\right\rvert\leq\epsilon_1 $ and $ \left\lvert|b|-|b'|\right\rvert\leq\epsilon_2 $. If
    $$
        |a'|\leq N|b'|-N\epsilon_2-\epsilon_1,
    $$ 
    then $ |a|\leq N|b| $.
\end{lemma}

\begin{proof}
This is easy to verify.
\end{proof}

Let $ N=e^{2(6\epsilon N)} $ and $ \epsilon_1=\epsilon_2=e^{-\frac{1}{4}C_1\rho N} $. By Lemma \ref{abN}, it is easy to see that for any $ g $ satisfying $ |g(n)|\leq\mathcal{C}e^{-s|n|} $, one can replace \eqref{P1} by the stronger condition
\begin{equation} \label{P''}
    \sum_{n_1,n_2\in\Lambda}e^{2c_0|n_1-n_2|}\lVert M^{\mathcal{T}}_{n_1-a}(\alpha,\theta,E,a)\rVert^2\lVert M^{\mathcal{T}}_{b-n_2}(\alpha,\theta,E,n_2+1)\rVert^2\leq e^{2(6\epsilon N)}\left[\det(H^{\mathcal{T}}_\Lambda-E)\right]^2-\tfrac{1}{2}.
\end{equation}  
Clearly, (\ref{P''}) is of the form 
\begin{equation} \label{P'''}
    P(\cos\alpha,\sin\alpha,\cos\theta,\sin\theta,E)\geq 0
\end{equation} 
where $ P $ is a polynomial of degree at most $ 2C_1 N^3 $.

Based on the same reasoning, one may again (assuming $ \theta $ bounded) truncate the power series for ``$ \cos $'' and ``$ \sin $'' and similarly replace (\ref{P''}) by the stronger condition (we use $ *^{\mathcal{T'}}$ to denote the corresponding quantities after the second truncation)
\begin{equation} \label{P'2}
    \sum_{n_1,n_2\in\Lambda}e^{2c_0|n_1-n_2|}\lVert M^{\mathcal{T'}}_{n_1-a}(\alpha,\theta,E,a)\rVert^2\lVert M^{\mathcal{T'}}_{b-n_2}(\alpha,\theta,E,n_2+1)\rVert^2\leq e^{2(6\epsilon N)}\left[\det(H^{\mathcal{T'}}_\Lambda-E)\right]^2-1,
\end{equation}  
 which is of the form
\begin{equation} \label{P'}
    P(\alpha,\theta,E)\geq 0
\end{equation} 
of degree at most $ N^5 $. Denote (\ref{P'2}) by the property ``$ 0^{\mathcal{T'}} $-P ''. 

Now we explain why the exceptional set $ \Omega $ in Proposition \ref{Prop7.1} may also be assumed semialgebraic of degree at most $N^5 $. Fix $ E $. Clearly, we have
$$
\{\theta : 0^{\mathcal{T'}}\text{-P}  \text{ holds for  one of the } \Lambda^i  \}\subset \bigcap_{|g(n)|\leq\mathcal{C}e^{-s|n|}}\{\theta : g\text{-P} \text{ holds for one of the } \Lambda^i \}.
$$
When $ \theta\notin\Omega $, which is defined in Proposition \ref{Prop7.1}, 
then there exists $ \Lambda $ such that 
$$
    \begin{aligned}
        \sum_{n_1,n_2\in\Lambda}&e^{2c_0|n_1-n_2|}\lVert M^{\mathcal{T'}}_{n_1-a}(\alpha,\theta,E,a)\rVert^2\lVert M^{\mathcal{T'}}_{b-n_2}(\alpha,\theta,E,n_2+1)\rVert^2\\
        &\leq\sum_{n_1,n_2\in\Lambda}e^{2c_0|n_1-n_2|}\lVert \widetilde{M}_{n_1-a}(\alpha,\theta,E,a)\rVert^2\lVert\widetilde{M}_{b-n_2}(\alpha,\theta,E,n_2+1)\rVert^2+e^{-c_1N}\\ 
        &\leq N^2e^{2(5\epsilon N)}e^{2(L(\alpha,E)-2\epsilon) N}+e^{-c_1N}-1\\
        &\leq N^2e^{2(5\epsilon N)}\left[\det(\widetilde{H}_\Lambda-E)\right]^2-\frac{1}{2}\\
        &\leq e^{2(6\epsilon N)}\left[\det(H^{\mathcal{T'}}_\Lambda-E)\right]^2-1,
    \end{aligned}
$$ 
that is, (\ref{P'2}) holds. Thus, 
$$
    \T^d\backslash\Omega\subset \{\theta : 0^{\mathcal{T'}}\text{-P}  \text{ holds for  one of the } \Lambda^i  \}.
$$ 
Denote $$ \Omega_1^{i}=\T^d\backslash \{\theta : 0^{\mathcal{T'}}\text{-P}  \text{ holds for } \Lambda^i  \}.
$$ 
It follows that
$$
\meas\bigcup_{i\in\{L,R\}}\bigcup_{|g(n)|\leq\mathcal{C}e^{-s|n|}} \{\theta : g\text{-P} \text{ does not hold for } \Lambda^i \} < \meas (\Omega_1^{L}\cup\Omega_1^{R})<\meas \Omega< e^{-cN^{\sigma}}.
$$ 
Thus we can shrink the exceptional set to be $ \Omega_1^{L}\cup\Omega_1^{R} $, which is  semialgebraic of degree $ <N^5 $.
\end{proof}

\noindent 
\textbf{Step 2:}

Fix $ N=N_0 $, $ N'=N_0^2 $ sufficiently large and let $ \Omega=\Omega(\alpha,E,N') $ be the set provided by  Proposition \ref{Prop7.1}. Consider the orbit $ \{\theta_0+j\alpha : 0\leq j\leq N_1\} $, $ N_1=N_0^{C}  $, where $ C $ is a sufficiently large constant. Applying Lemma~\ref{cor9.7inB} with $ \mathcal{S}=\Omega $, $ B=N_0^3 $, $ \eta=e^{-cN_0^{\sigma}} $ and $ N=N_1 $, it follows that except for at most $ N_1^{1-\delta} $ values of $ 0\leq j\leq N_1 $, $ \theta_0+j\alpha $ will not belong to $ \Omega $. For these $ j $, consider $ g=0 $, we have one of the intervals (which depends on $ \theta_0+j\alpha $)
\begin{equation} \label{Lambda}
    \Lambda^R=[1,N_0]+N_0^2;[1,N_0-1]+N_0^2;[2,N_0]+N_0^2;[2,N_0-1]+N_0^2
\end{equation} 
satisfies  (by the shift condition) 
$$
\begin{aligned}
    \sum_{n_1,n_2\in\Lambda^R+j}e^{2c_0|n_1-n_2|}\lVert M_{n_1-a}(\theta_0,a+j)\rVert^2&\lVert M_{b-n_2}(\theta_0,n_2+1+j)\rVert^2\\
    &\qquad\qquad\leq e^{2(6\epsilon N_0)}\left[\det(H_{\Lambda^R+j}-E)\right]^2.
\end{aligned}
$$ 
By \eqref{3.1a}, for any $ n $, we have 
$$
\begin{aligned}
    \widetilde{M}_{k}(\alpha,\theta,E,n)=&M_k(\alpha,\theta,E,n)+\\
    &\sum_{i=0}^{k-1}\widetilde{M}_{k-i-1}(\alpha,\theta,E,n+i+1)\begin{pmatrix}
        -g(n+i)&0\\
        0&0
    \end{pmatrix}M_{k-i}(\alpha,\theta,E,n+i)^{-1}\\
    &\ \cdot M_{k-i}(\alpha,\theta,E,n+i)M_{i}(\alpha,\theta,E,n).
\end{aligned}
$$ 
Thus we have for any $ k\leq N_0 $, for any $ g $ (satisfying $ |g(n)|\leq\mathcal{C}e^{-s|n|} $),
$$
    \begin{aligned}
       & \lVert \widetilde{M}_k(\alpha,\theta,E,N_0^2+1+j)-M_k(\alpha,\theta,E,N_0^2+1+j)\rVert \\
        &\leq \sum_{i=0}^{k-1}C'^{2k-2i-1}\mathcal{C} e^{-sN_0^2}\lVert M_k(\alpha,\theta,E,N_0^2+1+j)\rVert\\
        &\leq e^{-\frac{s}{2}N_0^2}\lVert M_k(\alpha,\theta,E,N_0^2+1+j)\rVert,
    \end{aligned}
$$ 
since the $ \Lambda^R+j $ we have chosen satisfies
$$
2|\det(H_{\Lambda^R+j}-E)|=\lVert M_k(\alpha,\theta,E,N_0^2+1+j)\rVert.
$$
Thus 
{\small
$$
\begin{aligned}
    &|[\det(\widetilde{H}_{\Lambda^R+j}-E)]^2-[\det(H_{\Lambda^R+j}-E)]^2|\leq e^{-\frac{s}{3}N_0^2}|\det(H_{\Lambda^R+j}-E)|^2,\\    
    &|\lVert \widetilde{M}_{n_1-a}(\alpha,\theta,E,a+j)\rVert^2\lVert\widetilde{M}_{b-n_2}(\alpha,\theta,E,n_2+1+j)\rVert^2\\
    &\qquad -\lVert M_{n_1-a}(\alpha,\theta,E,a+j)\rVert^2\lVert M_{b-n_2}(\alpha,\theta,E,n_2+1+j)\rVert^2|\\
    &\qquad\qquad\leq e^{-\frac{s}{3}N_0^2}\lVert M_{n_1-a}(\alpha,\theta,E,a+j)\rVert^2\lVert M_{b-n_2}(\alpha,\theta,E,n_2+1+j)\rVert^2.
\end{aligned}
$$ }
Thus for any $ g $, one of intervals $ \Lambda^R+j $, where $ \Lambda^R $ is defined as in \eqref{Lambda}, satisfies 
$$
\begin{aligned}
    \sum_{n_1,n_2\in\Lambda^R+j}&e^{2c_0|n_1-n_2|}\lVert \widetilde{M}_{n_1-a}(\alpha,\theta_0,E,a+j)\rVert^2\lVert \widetilde{M}_{b-n_2}(\alpha,\theta_0,E,n_2+1+j)\rVert^2\\
&\qquad\qquad\qquad\leq e^{2(7\epsilon N_0)}\left[\det(\widetilde{H}_{\Lambda^R+j}-E)\right]^2,
\end{aligned}
$$ 
which implies
$$
|G_{\Lambda^R+j}(E,\theta_0)(n_1,n_2)|<e^{-c_0|n_1-n_2|+7\epsilon N_0} \text{ for any $ g$ (satisfying $ |g(n)|\leq\mathcal{C}e^{-s|n|} $)}.
$$ 

Recall that if for some $ g $ fixed,
$$
    \widetilde{H}_{v,\alpha,\theta_0}u=Eu,\ \ |u_n|\leq C_E(1+|n|),
$$ 
then for any $ n\in\Lambda^R+j $, writing $ \Lambda^R+j=[n_1,n_2] $, we have
$$
    \begin{aligned}
        |u_n|&\leq |G_{\Lambda^R+j}(E,\theta_0)(n,n_1)| |u_{n_1-1}|+|G_{\Lambda^R+j}(E,\theta_0)(n,n_2)||u_{n_2+1}|\\
        &\leq C N_1 \max_{i\in\{1,2\}}e^{-c_0|n-n_i|+7\epsilon N_0}.
    \end{aligned}
$$ 
Taking in particular $ n=N_0^2+\tfrac{N_0}{2}+j $, then we have $ \min\limits_{i\in\{1,2\}}|n-n_i|\geq \tfrac{N_0}{3} $, thus 
$$
    |u_{N_0^2+\frac{N_0}{2}+j}|< e^{-\frac{c_0}{4}N_0}
$$ 
holds for all $ 0\leq j\leq N_1 $ except $ N_1^{1-\delta} $ many.
Similarly, we have 
$$
    |u_{-N_0^2-\frac{N_0}{2}-j}|<e^{-\frac{c_0}{4}N_0}
$$ 
holds for all $ 0\leq j\leq N_1 $ except $ N_1^{1-\delta} $ many. By the pigeonhole principle, one can find $ 0\leq j_0\leq 2N_1 $, such that
$$
    |u_{\pm j_0}|< e^{-\frac{c_0}{4}N_0}.
$$ 

\noindent
\textbf{Step 3:}

Let $ I=[-j_0+1,j_0-1] $, notice that \footnote{We denote the Hilbert-Schmidt norm of a matrix B as
\begin{equation*} 
    \lVert B\rVert_{\mathrm{HS}}=\left(\sum_{i,j}|B_{ij}|^2\right)^{\frac{1}{2}}.
\end{equation*}} 
$$
    \begin{aligned}
        1=|u_0|&\leq |G_{I}(E,\theta_0)(0,-j_0+1)| |u_{-j_0}|+|G_{I}(E,\theta_0)(0,j_0-1)||u_{j_0}|\\
        &\leq\lVert G_{[-j_0+1,j_0-1]}(E,\theta_0)\rVert_{\mathrm{HS}} e^{-\frac{c_0}{4}N_0}.
    \end{aligned}
$$ 
We may conclude that
$$
\lVert G_{[-j_0+1,j_0-1]}(E,\theta_0)\rVert_{\mathrm{HS}}> e^{\frac{c_0}{4}N_0},
$$ 
which is equivalent to 
\begin{equation*}
    0<\mathrm{dist} (E,\mathrm{spec} \widetilde{H}_{[-j_0+1,j_0-1]}(\theta_0))<e^{-\frac{c_0}{4}N_0}.
\end{equation*} 
\end{proof}

Next, let $ N_2=N_0^{C'} $, where $ C' $ is a sufficiently large constant, and denote 
$$
    \mathcal{E}=\mathcal{E}_{\alpha}=\cup_{j\leq 2N_1}\mathrm{spec}\widetilde{H}_{[-j+1,j-1]}(\theta_0).
$$ 
Suppose one can ensure that
\begin{equation}\label{keytoensure}
    \theta_0+n\alpha \ (\mathrm{mod }\  1)\notin \bigcup_{E'\in\mathcal{E}_\alpha}\Omega(\alpha,N_0^2,E') \text{ for all } N_2^{\frac{1}{2}}\leq |n|\leq 2N_2.
\end{equation} 
Then for any $ E'\in\mathcal{E}_\alpha $, and any $ N_2^{\frac{1}{2}}\leq n\leq 2N_2 $, there is an interval
$$
    \Lambda^R_{(n)}\in\{[1,N_0]+N_0^2,[1,N_0-1]+N_0^2,[2,N_0]+N_0^2,[2,N_0-1]+N_0^2\}
$$ 
such that
$$
    |G_{\Lambda^R_{(n)}+n}(E',\theta_0)(n_1,n_2)|<e^{-L(\alpha,E')|n_1-n_2|+7\epsilon N_0}<e^{-c_0|n_1-n_2|+7\epsilon N_0}.
$$ 
Recall that $ L(\alpha,E) $ is uniformly continuous in a compact set. We can therefore choose $ N_0 $ sufficiently large such that for any $ |E-E'|<e^{-\frac{c_0}{4}N_0} $, we have 
$$
    L(\alpha,E)\leq L(\alpha,E')+\epsilon.
$$ 
Then by Lemma~\ref{lemdistEE'} and a telescoping argument we see that
there is an interval
$$
\Lambda^R_{(n)}\in\{[1,N_0]+N_0^2,[1,N_0-1]+N_0^2,[2,N_0]+N_0^2,[2,N_0-1]+N_0^2\}
$$ 
such that
$$
    |G_{\Lambda^R_{(n)}+n}(E,\theta_0)(n_1,n_2)|<e^{-L(\alpha,E')|n_1-n_2|+8\epsilon N_0}<e^{-c_0|n_1-n_2|+8\epsilon N_0}.
$$ 
Define the interval
$$
    \widetilde{\Lambda}^R=\bigcup_{N_2^{\frac{1}{2}}\leq n\leq 2N_2}(\Lambda^R_{(n)}+n)\supset [N_2^{\frac{2}{3}},2N_2]
$$ 
and use the well-known ``paving property'' (Lemma 10.33 in \cite{bourgain2004green}) 
\noindent to find
$$
|G_{\widetilde{\Lambda}^R}(E,\theta_0)(n_1,n_2)|<e^{-(c_0-)|n_1-n_2|} \text{ if } |n_1-n_2|>\frac{1}{10}N_2,
$$ 
and therefore 
$$
    |u_j|<e^{-(c_0-)j} \text{ for }\frac{1}{2}N_2\leq j\leq N_2.
$$ 
The negative side is similar, thus we obtain the exponential decay.

The last step is to exclude an $ \alpha $-set $ \mathcal{R} $ with $ \meas\mathcal{R}=0 $ to ensure \eqref{keytoensure}, which is rather standard, thus Anderson localization holds. We refer to \cite{bourgain2004green} for details.
\end{proof}

\section{Absolutely Continuous Spectrum}\label{sec6}

As we explained in the introduction, $\ell^1$ perturbations are enough to preserve the ac spectrum $ \Sigma=\Sigma_{v,\alpha} $. In our setting, it just means that if $ (\alpha,S_E^v) $ is bounded, then $ (\alpha,S_E^{\tilde{v}}) $ is also bounded in the case of $\ell^1$ perturbations.  To prove that the perturbed operator has purely absolutely continuous spectrum in $\Sigma$, our method is to further explore the proof that the initial operator $H_{\lambda,\alpha,\theta} $ is purely ac, and reduce the desired statement to the following two statements:
\begin{itemize}

\item[(i):] $\widetilde{H}_{\lambda,\alpha,\theta} $ has no eigenvalues at the end of the gaps;

\item[(ii):] the energies $E\in  \Sigma$ for which $ (\alpha,S_E^v) $ is almost-reducible do not support any singular continuous spectrum of $\widetilde{H}_{\lambda,\alpha,\theta} $. 

\end{itemize}

Recall that a quasi-periodic cocycle $(\alpha,A)$ is \emph{analytically conjugated} to $(\alpha, \tilde{A})$ if there exists $B\in C^{\omega}(\T^d, {\rm PSL}(2,\R))$ such that 
$$
B(\cdot+\alpha)^{-1}A(\cdot)B(\cdot)=\tilde{A}(\cdot).
$$
A cocycle $(\alpha,A)$ is said to be \emph{almost reducible} if the closure of its analytic conjugacy class contains a constant. 

It will be crucial to estimate the growth of the cocycle $\lVert \widetilde{M}_k(\alpha,\theta,E,0)\rVert$. The method we employ is based on a KAM scheme. In this way one can obtain precise estimates for $\lVert M_k(\alpha,\theta,E,0)\rVert$, which we then parlay into a priori estimate for $\lVert \widetilde{M}_k(\alpha,\theta,E,0)\rVert$ in a controllable scale. The reader may consult Remark~\ref{rem:5.1} for more details.

\subsection{Growth of the Cocycle}

First we apply the KAM scheme from \cite{li2021absolutely,wang2021exact} to the Schr\"odinger cocycle $ (\alpha,S_E^v) $, where we  rewrite it as $ (\alpha,A_0 e^{f_0(\theta)}) $. Here, $ A_0(E)=\begin{pmatrix}\begin{smallmatrix}
    E &-1\\1&0
\end{smallmatrix}\end{pmatrix}
 $ and $ f_0=\begin{pmatrix}\begin{smallmatrix} 0& 0\\v & 0 \end{smallmatrix}\end{pmatrix} $. For any $\epsilon_0>0$ that is small enough (this will be defined later) and $r_0>0$, we inductively define
$$
    \epsilon_j=\epsilon_0^{2^j}, \qquad r_j=\frac{r}{2^j}, \qquad N_j=\frac{4^{j+1}\ln \epsilon_0^{-1}}{r}.
$$ 
Then we have the following:
 
\begin{proposition}[\cite{li2021absolutely,wang2021exact}]\label{Prop3.1}
Suppose $ \alpha\in DC_d(\gamma,\tau) $ and
    \[
        \lVert f_0\rVert_r \leq \epsilon_0\leq \frac{D_0}{ \sup_{E\in[-4,4]} \lVert A_0(E)\rVert^{C_0}} 
    \left(\frac{r}{2}\right)^{C_0\tau},
    \] 
where $ D_0=D_0(\gamma,\tau,d) $ and $ C_0 $ are numerical constants.  Then for any $ j\geq 1 $, there exists $ B_j\in C^\omega_{r_j}(2\T^d,\mathrm{SL}(2,\R)) $ such that 
    $$
        B_j(\theta+\alpha)^{-1}A_0(E) e^{f_0(\theta)}B_j(\theta)=A_je^{f_j(\theta)},
    $$ 
where $ A_j\in\mathrm{SL}(2,\R) $, $ \lVert A_j\rVert\leq 1+\epsilon_{j-1}^{\frac{1}{16}} $, $ \lVert f_j\rVert_{r_j}\leq \epsilon_{j} $ and $ B_j $ satisfies 
    $$
        \begin{aligned}
            \lVert B_j\rVert_0&\leq \epsilon_{j-1}^{-\frac{1}{192}},&
            |\deg B_j|&\leq 2 N_{j-1}.
        \end{aligned}
    $$ 
    \begin{enumerate}[label=$\mathit{(\arabic*)}$,align=left,widest=ii,labelsep=8pt,leftmargin=2em]
        \item For any $ 0<|n|\leq N_{j-1} $, denote 
        $$
            \Lambda_n(j)=\{E\in \Sigma:|2\rho(\alpha,A_{j-1}e^{f_{j-1}(\theta)})-\langle n,\alpha\rangle|_{\T}\leq \epsilon_{j-1}^{\frac{1}{15}}\}, \quad K_j:=\bigcup\limits_{0<|n|\leq N_{j-1}}\Lambda_n(j).
        $$ 
         Then for any $ E\in K_j $, there exists $ n_j\in\Z^d $ with $ |n_j|\leq 2N_{j-1} $ such that 
        \begin{equation}\label{xxxx}
            |2\rho(\alpha,A_0 e^{f_0(\theta)})-\langle n_j,\alpha\rangle|_{\T}<2\epsilon_{j-1}^{\frac{1}{15}}.
        \end{equation} 
        and we have
        \begin{equation} \label{xxxxx}
            \sup_{0<|s|<C\epsilon_{j-1}^{-\frac{1}{16}}}\lVert M_s(\alpha,\theta,E,0)\rVert_0 \leq C\lVert B_j\rVert_0^2\leq C \epsilon_{j-1}^{-\frac{1}{96}},
        \end{equation} 
        where $ C $ is a universal constant.
        \item Moreover, there always exist unitary matrices $ U_j\in\mathrm{SL}(2,\C) $ such that 
        \begin{equation}\label{ar}
            U_j^{-1}A_je^{f_j(\theta)}U_j=\begin{pmatrix}
                e^{2\pi i\rho_j}&c_j\\0&e^{-2\pi i\rho_j}
            \end{pmatrix}+F_j(\theta).
        \end{equation}
        Then for any $ E\in\Sigma $, we have $ \rho_j\in\R $, $ \lVert F_j\rVert_{r_j}\leq \epsilon_j $, $ \lVert B_jU_j\rVert^2_0 |c_j|\leq 8\lVert A_0\rVert $, and
        \begin{equation} \label{lineargrowth}
            \sup_{0<|n|<\epsilon_{j-1}^{-\frac{1}{16}}}\lVert M_n(\alpha,\theta,E,0)\rVert_0\leq 2\lVert B_jU_j\rVert^2_0(1+n |c_j|)\leq 2\epsilon_{j-1}^{-\frac{1}{96}}+16\lVert A_0\rVert |n|.
        \end{equation} 
\end{enumerate}
\end{proposition}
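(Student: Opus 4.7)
The plan is an induction on $j$, where each step implements a KAM iteration that distinguishes non-resonant from resonant regimes. The base case comes directly from the smallness hypothesis on $f_0$. For the inductive step, assuming $B_{j-1}$ conjugates the cocycle to $A_{j-1} e^{f_{j-1}}$ with the stated bounds, I would seek the correction $\tilde B_j(\theta) = P_j \, e^{Y(\theta)}$, where $Y$ is small and solves the truncated linearized cohomological equation at Fourier modes $|n| \leq N_{j-1}$, and $P_j$ is a (possibly degree-shifting) preconditioner. The Fourier coefficients $\hat Y(n)$ are determined by inverting $\mathrm{Ad}_{A_{j-1}} - e^{2\pi i\langle n,\alpha\rangle}$ on the Lie algebra; the truncation tail contributes at most $\epsilon_{j-1}^{2} = \epsilon_j$ in $r_j$-norm.

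The key technical point is handling the small divisors $e^{4\pi i \rho_{j-1}} - e^{2\pi i\langle n,\alpha\rangle}$. When these remain bounded below for all $0 < |n| \leq N_{j-1}$ (the non-resonant regime), the Diophantine condition allows solving the cohomological equation with $\|Y\|_{r_j}$ small and $\|\tilde B_j\|_0 \leq 2$. When a resonance $|2\rho_{j-1} - \langle n_j,\alpha\rangle|_{\T} < \epsilon_{j-1}^{1/15}$ with $|n_j| \leq N_{j-1}$ is present, I would choose $P_j$ to be a rotational conjugation of degree $n_j$ that transports $A_{j-1}$ to an almost-parabolic form with eigenvalues near $1$, solve the cohomological equation in the new coordinates, and fold $P_j$ back into $B_j$. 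This preconditioner is responsible for both $\|B_j\|_0 \leq \epsilon_{j-1}^{-1/192}$ and $|\deg B_j| \leq 2N_{j-1}$, and \eqref{xxxx} follows by tracking how the rotation number transforms under these degree shifts.

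For item~(2), a constant unitary $U_j \in \mathrm{SL}(2,\C)$ then brings the elliptic part of $A_j$ to the upper-triangular shape \eqref{ar}. The key bound $\|B_jU_j\|_0^2\,|c_j| \leq 8\|A_0\|$ can be extracted by applying the conjugation identity to a vector aligned with the $c_j$-invariant axis and using that the unconjugated transfer matrices $M_k$ are controlled on the relevant window; intuitively, a larger conjugant must force a smaller $c_j$, since the original cocycle cannot grow super-linearly below its reducibility scale. For item~(1), on the window $|s| \leq C\epsilon_{j-1}^{-1/16}$ the smallness $\|f_j\|_{r_j} \leq \epsilon_j$ makes $(A_j e^{f_j})^{(s)}$ comparable to $A_j^s$ (with error controlled by $s\epsilon_j$), and $\|A_j^s\|$ is dominated by $1 + |s||c_j|$ from the upper-triangular form. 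Undoing the conjugation by $B_j$ yields both \eqref{xxxxx} and \eqref{lineargrowth}.

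The main obstacle is the resonant step, where the large conjugation $\|B_j\|_0 \sim \epsilon_{j-1}^{-1/192}$ must be traded off against a correspondingly small $|c_j|$ while preserving $\|f_j\|_{r_j} \leq \epsilon_j$. The specific exponents $1/192$, $1/96$, $1/16$, $1/15$ reflect a tight bookkeeping between these three quantities, and getting them sharp is essential for what comes next: the linear bound \eqref{lineargrowth} is later integrated against the decaying perturbation $g$, and the exponents determine the optimal decay class of $g$ for which purely absolutely continuous spectrum is preserved.
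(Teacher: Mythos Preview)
The paper does not prove this proposition: it is stated with a citation to \cite{li2021absolutely,wang2021exact} and used as a black box. So there is no ``paper's own proof'' to compare against. Your sketch is a reasonable outline of the standard KAM iteration one finds in those references --- induction on $j$, solving a truncated cohomological equation in the non-resonant regime, and inserting a degree-shifting rotation $P_j$ in the resonant regime to kill the near-resonance before solving --- and the mechanism you describe for \eqref{xxxx} (tracking rotation number under degree shifts) and for \eqref{xxxxx}, \eqref{lineargrowth} (comparing $M_s$ to $A_j^s$ via the conjugation and the upper-triangular form) is the right one.

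One point to flag: your justification of the key inequality $\|B_jU_j\|_0^2|c_j|\leq 8\|A_0\|$ is a bit circular as written. You appeal to the fact that ``the unconjugated transfer matrices $M_k$ are controlled on the relevant window,'' but that control is exactly what \eqref{lineargrowth} is asserting. In the actual argument (see \cite{leguil2017asymptotics,wang2021exact}) this bound is obtained directly from the structure of the resonant-step conjugation: the size of $c_j$ is tied to the off-diagonal of $A_{j-1}$ after the rotation $P_j$, and the blow-up in $\|B_j\|_0$ comes from diagonalizing an almost-parabolic matrix whose off-diagonal is precisely that quantity, so the product $\|B_j\|_0^2|c_j|$ is bounded \emph{a priori} by the original cocycle norm. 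Aside from that, your plan matches the approach of the cited sources.
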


As a direct consequence, we have the following:

\begin{corollary}\label{uniform}
For any  $ E\in\Sigma $, we have 
\begin{equation} \label{uni-1}
        \lVert M_n(\alpha,\theta,E,0)\rVert_0 \leq C' |n|,
    \end{equation} 
    where $ C' $ is independent of $ E $.
\end{corollary}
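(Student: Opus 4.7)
The plan is to derive the bound directly from the linear growth estimate \eqref{lineargrowth} in item~(2) of Proposition~\ref{Prop3.1}. The key point is that $E \in \Sigma$ is confined to a compact subset of $\R$ (say $[-K,K]$ with $K$ depending only on $v$), so $\lVert A_0(E)\rVert$ and hence the initial smallness $\epsilon_0$ can be chosen uniformly in $E$. In particular, the sequence of almost reducibility scales $\epsilon_{j-1}^{-1/16}$ and the constants appearing in \eqref{lineargrowth} depend only on $v$, $\alpha$ and $r$, and not on the particular $E \in \Sigma$.

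Given $n \in \Z$ with $|n|\ge 1$, I would choose $j \geq 1$ to be the smallest index such that $|n| < \epsilon_{j-1}^{-1/16}$; since $\epsilon_{j-1}^{-1/16} \to \infty$ super-exponentially in $j$, such a $j$ exists, and for $j \ge 2$ we have by minimality the complementary bound $|n| \geq \epsilon_{j-2}^{-1/16}$. (The small-$n$ case $j=1$, i.e.\ $|n| < \epsilon_0^{-1/16}$, can be absorbed into the final constant by the trivial estimate $\lVert M_n\rVert_0 \leq \lVert A_0\rVert^{|n|} e^{|n|\lVert f_0\rVert_r}$.) Applying \eqref{lineargrowth} at level $j$ gives
\[
\lVert M_n(\alpha,\theta,E,0)\rVert_0 \;\leq\; 2\,\epsilon_{j-1}^{-1/96} + 16\,\lVert A_0\rVert\,|n|.
\]

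To control the first term in terms of $|n|$, I would use the relation $\epsilon_{j-1} = \epsilon_{j-2}^{2}$ together with $|n| \geq \epsilon_{j-2}^{-1/16}$. This gives
\[
\epsilon_{j-1}^{-1/96} \;=\; \epsilon_{j-2}^{-1/48} \;=\; \bigl(\epsilon_{j-2}^{-1/16}\bigr)^{1/3} \;\leq\; |n|^{1/3} \;\leq\; |n|,
\]
so that $\lVert M_n(\alpha,\theta,E,0)\rVert_0 \leq \bigl(2 + 16\lVert A_0\rVert\bigr)|n|$. Taking $C' := 2 + 16\sup_{E \in [-K,K]}\lVert A_0(E)\rVert$ (adjusted to also cover the finitely many scales $|n| < \epsilon_0^{-1/16}$) yields the desired bound with a constant $C'$ independent of $E \in \Sigma$ and $\theta \in \T^d$.

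The only delicate point in this argument is the verification that the constants in Proposition~\ref{Prop3.1} can genuinely be taken uniformly in $E \in \Sigma$; this follows because the smallness condition on $\epsilon_0 = \lVert f_0\rVert_r$ in Proposition~\ref{Prop3.1} involves only $\sup_{E\in[-4,4]}\lVert A_0(E)\rVert$ (or more generally $\sup_{E \in \Sigma}\lVert A_0(E)\rVert$, which is finite), so once $\lambda$ is small enough the KAM scheme applies simultaneously at every $E \in \Sigma$ with scale parameters that do not depend on $E$. No genuinely hard step is involved beyond this uniformity observation.
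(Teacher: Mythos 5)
Your proof is correct and follows essentially the same approach as the paper: locate $|n|$ in the right dyadic KAM scale $j$ and then apply the linear-growth bound \eqref{lineargrowth} at that scale. The only cosmetic difference is in the scale-matching: you take the smallest $j$ with $|n| < \epsilon_{j-1}^{-1/16}$ and use the minimality to get $|n| \ge \epsilon_{j-2}^{-1/16}$, then pass through $|n|^{1/3}$; the paper instead notes that the intervals $[\epsilon_{j-1}^{-1/96},\,\epsilon_{j-1}^{-1/16}]$ already cover $[\epsilon_0^{-1/96},\infty)$ (since $\epsilon_j^{-1/96} = \epsilon_{j-1}^{-1/48} \le \epsilon_{j-1}^{-1/16}$), so choosing $j$ with $n$ in that interval gives $\epsilon_{j-1}^{-1/96} \le n$ directly. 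Both bookkeepings are fine; your extra remarks on uniformity in $E$ and on the trivial small-$|n|$ case are correct and make explicit what the paper leaves implicit.
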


\begin{proof}
This simple observation was first obtained by Eliasson \cite{eliasson}, we include the proof for completeness. 
 Indeed, since $ \epsilon_j=\epsilon_0^{2^j} $, for any $ n\in[\epsilon_{0}^{-\frac{1}{96}},+\infty) $, there exists $ j $ such that $ n\in [\epsilon_{j-1}^{-\frac{1}{96}},\epsilon_{j-1}^{-\frac{1}{16}}] $, then by (\ref{lineargrowth}), there exists $ C'>0 $ such that \eqref{uni-1} holds. 
\end{proof}

Once we have this, we can control the growth of the perturbed cocycle:
\begin{lemma}\label{new-gron}
If   $\sum_{n  } | n| |g(n)| <\infty$,  then for any  $ E\in\Sigma $, we have 
    \[
         \lVert \widetilde{M}_k(\alpha,\theta,E,0)\rVert\leq C'' k.
    \] 
Consequently, for any  $ E\in K_j $, we have
    \begin{equation}\label{lemC}
        \sup_{0<|s|<C\epsilon_{j-1}^{-\frac{1}{16}}}\lVert \widetilde{M}_s(\alpha,\theta,E,0)\rVert\leq C''' \epsilon_{j-1}^{-\frac{1}{96}},
    \end{equation} 
    where $ C''' $ is independent of $ E $.
\end{lemma}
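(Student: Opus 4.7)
The plan is to combine the telescoping formula \eqref{3.1b} with the linear-in-$k$ upper bound on the unperturbed cocycle from Corollary~\ref{uniform}, and to close the resulting recursion by a discrete Gronwall argument. The crucial structural point is that the telescoping convolution supplies a factor $(k-i-1)$ from the unperturbed transfer matrix, and this is exactly matched by the extra $i$ that appears after normalizing $\widetilde{M}_i$ by $i$; this is what makes the weighted-$\ell^1$ assumption $\sum_n |n|\,|g(n)| < \infty$ (rather than a weighted-$\ell^2$ condition) precisely the right hypothesis.

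Concretely, I would take $n=0$ in \eqref{3.1b}, then use the cocycle identity $M_m(\alpha,\theta,E,n) = M_m(\alpha,\theta+n\alpha,E,0)$ together with the $\theta$-uniformity of Corollary~\ref{uniform} to obtain $\lVert M_m(\alpha,\theta,E,n)\rVert \leq C'|m|$ uniformly in $\theta$ and $n$. This yields
\begin{equation*}
\lVert \widetilde{M}_k \rVert \;\leq\; C'k + \sum_{i=0}^{k-1} C'(k-i-1)\,|g(i)|\,\lVert \widetilde{M}_i \rVert.
\end{equation*}
Dividing by $k$ and setting $y_k := \lVert \widetilde{M}_k \rVert / k$ for $k\geq 1$ (the $i=0$ term being harmless since $\lVert \widetilde{M}_0\rVert = 1$), the ratio $(k-i-1)/k\leq 1$ drops out while the leftover factor $i$ pairs with $|g(i)|$, giving
\begin{equation*}
y_k \;\leq\; C'(1 + |g(0)|) + C' \sum_{i=1}^{k-1} i\,|g(i)|\,y_i.
\end{equation*}
The discrete Gronwall inequality, together with $\sum_i i\,|g(i)| < \infty$, then yields $y_k \leq C''$ uniformly in $k$, i.e.\ $\lVert \widetilde{M}_k(\alpha,\theta,E,0)\rVert \leq C''k$.

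For the sharper bound \eqref{lemC}, I would run the same telescoping scheme once more, this time replacing Corollary~\ref{uniform} by the better estimate \eqref{xxxxx}: for $E\in K_j$ and $|s| < C\epsilon_{j-1}^{-1/16}$, translation equivariance upgrades \eqref{xxxxx} to $\lVert M_{k-i-1}(\alpha,\theta,E,i+1)\rVert \leq C\epsilon_{j-1}^{-1/96}$ for every $i$ in the allowed range. Inserting the just-established linear bound $\lVert \widetilde{M}_i\rVert \leq C''i$ on the right-hand side of \eqref{3.1b} then gives
\begin{equation*}
\lVert \widetilde{M}_k \rVert \;\leq\; C\epsilon_{j-1}^{-1/96}\Big( 1 + C'' \sum_{i=0}^{\infty} i\,|g(i)| \Big) \;=\; C''' \epsilon_{j-1}^{-1/96},
\end{equation*}
uniformly in $E\in K_j$, which is the claim.

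The only real point to double-check is the uniformity in the base point of both Corollary~\ref{uniform} and \eqref{xxxxx}; this is automatic from the cocycle identity, but it is exactly what allows the present scheme to preserve linear-in-$k$ growth and therefore to close under the weighted-$\ell^1$ (i.e.\ $\ell^{1,1}$) hypothesis, rather than the weighted-$\ell^2$ hypothesis that the more traditional Gronwall-type estimates of \cite{AK06reducibility,krikorian2011kam} would force.
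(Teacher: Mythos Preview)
Your proof is correct and follows essentially the same approach as the paper: telescoping via \eqref{3.1b}, inserting the uniform linear bound from Corollary~\ref{uniform}, normalizing by $k$ so that the factor $i$ pairs with $|g(i)|$, and closing by discrete Gronwall; then re-running the telescoping with \eqref{xxxxx} and the linear bound on $\widetilde{M}_i$ to obtain \eqref{lemC}. Your handling of the $i=0$ term and of the base-point uniformity via the cocycle identity is slightly more explicit than the paper's, but the argument is the same.
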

\begin{proof}
We only consider the case where $ s $ is positive, the other case is similar. Recall that by (\ref{3.1b}), we have {\small
\[
    \begin{aligned}
        \widetilde{M}_{k}(\alpha,\theta,E,0)&=M_k(\alpha,\theta,E,0)+\sum_{i=0}^{k-1}M_{k-i-1}(\alpha,\theta,E,i+1)\begin{pmatrix}
            g(i)&0\\
            0&0
        \end{pmatrix}\widetilde{M}_{i}(\alpha,\theta,E,0).\\
    \end{aligned}
\] }
Thus we have
    \begin{equation} \label{5.6}
        \begin{aligned}
            \lVert \widetilde{M}_{k}(\alpha,\theta,E,0)\rVert \leq& \lVert M_{k}(\alpha,\theta,E,0)\rVert+\sum_{i=0}^{k-1}|g(i)|\lVert M_{k-i-1}(\alpha,\theta,E,i+1)\rVert\lVert\widetilde{M}_{i}(\alpha,\theta,E,0)\rVert.\\
        \end{aligned}
    \end{equation} 
 By Corollary \ref{uniform},   it follows that 
    \[
        \begin{aligned}
            k^{-1} \lVert \widetilde{M}_{k}(\alpha,\theta,E,0)\rVert \leq C'+C'\sum_{i=0}^{k-1}i|g(i)| i^{-1}\lVert\widetilde{M}_{i}(\alpha,\theta,E,0)\rVert.
        \end{aligned}
    \] 
By Gronwall's inequality this implies
    \[
         \lVert \widetilde{M}_k(\alpha,\theta,E,0)\rVert\leq \left( C'\exp\sum_{i=0}^{k-1} C' i |g(i)|\right)k\leq C'' k.
    \] 
Consequently, for any $ 0<s<C\epsilon_{j-1}^{-\frac{1}{16}} $, by \eqref{xxxxx} and  (\ref{5.6}), we have
\[
    \sup_{0<s<C\epsilon_{j-1}^{-\frac{1}{16}} }\lVert \widetilde{M}_s(\alpha,\theta,E,0)\rVert \leq C \epsilon_{j-1}^{-\frac{1}{96}}+C \epsilon_{j-1}^{-\frac{1}{96}}\sum_{i=0}^{s-1}C'' i|g(n+i)|\leq C''' \epsilon_{j-1}^{-\frac{1}{96}},
\] 
concluding the proof.
\end{proof}

\begin{remark}\label{rem:5.1}
Here the basic observation is Corollary \ref{uniform}. It is crucial to obtain the a priori bound on $ \lVert \widetilde{M}_k(\alpha,\theta,E,0)\rVert$. Otherwise, 
if we apply \eqref{ar} directly to estimate the growth of $ \lVert \widetilde{M}_k(\alpha,\theta,E,0)\rVert$, using traditional ways (see Avila-Krikorian \cite [Lemma 3.1]{AK06reducibility} and also Avila-Fayad-Krikorian \cite[Claim 4.6]{krikorian2011kam}), one can only obtain the result under the stronger assumption $g \in \ell^{2,1}$.
\end{remark}


\subsection{Proof of Theorem \ref{acthm} (1):}

  Let $ \mathcal{B} $ be the set of $ E\in\Sigma $ such that $ (\alpha,S^{v}_E) $ is bounded, and let $ \mathcal{B}' $ be the set of $ E\in\Sigma $ such that $ (\alpha,\widetilde{S}^{v}_E) $ is bounded, by (\ref{5.6}), it is easy to see that $ \mathcal{B}=\mathcal{B}' $. Recall the following well-known aspect of subordinacy theory:
  
\begin{theorem}[\cite{gilbert1987subordinacy,simon1996bounded}]\label{GP}

    Let $ \mathcal{B} $ be the set of $ E\in\sigma(\widetilde{H}_{v,\alpha,\theta}) $ such that $ (\alpha,\widetilde{S}^{v}_E) $ is bounded. Then the restriction of the spectral measure $ \widetilde{\mu}_{v,\alpha,\theta}|\mathcal{B} $ is purely absolutely continuous for all $ \theta\in\R $.
\end{theorem}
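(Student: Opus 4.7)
The plan is to reduce the theorem to the classical Gilbert--Pearson subordinacy criterion in its whole-line formulation due to Simon \cite{simon1996bounded}. I would establish the following chain of implications: boundedness of the cocycle $(\alpha, \widetilde{S}^v_E)$ $\Rightarrow$ every formal solution of $\widetilde{H}_{v,\alpha,\theta} u = Eu$ is bounded on $\Z$ $\Rightarrow$ no subordinate solution exists at either $+\infty$ or $-\infty$ $\Rightarrow$ the spectral measure restricted to the set of such energies is purely absolutely continuous.

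The first implication is immediate from the definition of the transfer matrix: if $E \in \mathcal{B}$ with $\sup_{n \in \Z} \lVert \widetilde{M}_n(\alpha,\theta,E,0) \rVert \leq C$, then any formal solution $u$ obeys $(u(n+1), u(n))^\top = \widetilde{M}_n(\alpha,\theta,E,0)(u(1), u(0))^\top$, yielding $|u(n)| \leq C\sqrt{|u(0)|^2 + |u(1)|^2}$ for all $n \in \Z$. Thus every solution is bounded on the whole line.

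For the second implication, recall that $u$ is \emph{subordinate at} $+\infty$ if $\|u\|_L / \|v\|_L \to 0$ as $L \to \infty$ for some (equivalently, every) linearly independent solution $v$, where $\|u\|_L^2 = \sum_{n=1}^L |u(n)|^2$. Boundedness gives the upper bound $\|u\|_L \leq C L^{1/2}$. In the opposite direction, the constancy of the Wronskian $W(u,v)(n) = u(n+1)v(n) - u(n)v(n+1) \equiv c \neq 0$, together with Cauchy--Schwarz applied to $c L = \sum_{n=1}^L |W(u,v)(n)|$, yields $\|u\|_{L+1}\|v\|_{L+1} \gtrsim L$, so in particular $\|u\|_L \gtrsim L^{1/2}$. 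Hence the ratio $\|u\|_L/\|v\|_L$ is bounded between two positive constants and no subordinate solution exists at $+\infty$. The same argument handles $-\infty$.

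For the last implication, I would invoke the whole-line subordinacy theorem of \cite{simon1996bounded}: on the Borel set of energies at which no subordinate solution exists at either end, the canonical spectral measure of $\widetilde{H}_{v,\alpha,\theta}$ is purely absolutely continuous. This immediately gives $\widetilde{\mu}_{v,\alpha,\theta}|_{\mathcal{B}}$ purely absolutely continuous for every $\theta \in \R$. The only subtlety worth flagging is the passage from the original half-line Gilbert--Pearson formulation to the whole-line version: half-line subordinacy at a single end alone is insufficient to exclude singular spectrum for a two-sided operator, so one must control both sides simultaneously. This is precisely what our two-sided cocycle bound on $\mathcal{B}$ provides, matching the hypothesis of Simon's theorem, so there is no additional obstacle beyond citing the correct formulation.
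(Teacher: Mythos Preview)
Your argument is correct and is precisely the standard proof underlying the cited references. Note, however, that the paper does not supply its own proof of this theorem: it is stated as a known result from \cite{gilbert1987subordinacy,simon1996bounded} and invoked as a black box, so there is no ``paper's proof'' to compare against beyond the citations themselves. Your sketch (bounded cocycle $\Rightarrow$ all solutions bounded $\Rightarrow$ two-sided $\|u\|_L \asymp L^{1/2}$ via the Wronskian $\Rightarrow$ no subordinate solution at either end $\Rightarrow$ purely absolutely continuous by Simon's whole-line subordinacy theorem) is exactly the content of those references specialized to this setting.
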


Thus by Theorem \ref{GP}, it is enough to prove that for every $ \xi\in\R $, $ \widetilde{\mu}=\widetilde{\mu}_{v,\alpha,\xi} $ is such that $ \widetilde{\mu}(\Sigma\backslash\mathcal{B}')=0 $.  Denote $ \mathcal{R} = \{ E \in \Sigma : (\alpha,S_E^{v}) \text{ is reducible}\} $. Observe that for every $ E \in \mathcal{R}\backslash\mathcal{B} $, $ (\alpha, S_E^v ) $ must be analytically reducible to a parabolic matrix. 

Let us recall the following well-known reducibility result of Eliasson:
  
\begin{theorem}\cite{eliasson}
Let $\delta > 0$, $\alpha \in DC_d(\gamma,\tau) $, and $A_{0}\in \mathrm{SL}(2,\mathbb{R})$. Then there is a constant $\epsilon = \epsilon(\gamma,\tau,\delta,\| A_{0}\|)$ such that if $A \in C^{\omega}_{\delta}(\mathbb{T}^{d},\mathrm{SL}(2,\mathbb{R}))$ is real analytic with
$$
\| A - A_{0} \|_{\delta} \leq \epsilon
$$
and the rotation number of the cocycle $(\alpha, A)$ satisfies
$$
\|2\rho(\alpha,A) - \langle n, \alpha \rangle\|_{\mathbb{R}/\mathbb{Z}} \geq \frac{\kappa}{|n|^{\tau}} \quad \forall \;  0 \neq n \in \mathbb{Z}^{d}
$$
or $2\rho(\alpha,A) = \langle n, \alpha \rangle \mod \Z$ for some $n \in \mathbb{Z}^{d}$, then $(\alpha,A)$ is analytically reducible.
\end{theorem}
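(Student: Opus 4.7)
The plan is to implement a KAM iteration scheme with explicit handling of resonances in the rotation number, producing a convergent sequence of analytic conjugations that brings $(\alpha, A)$ to a constant cocycle. Writing $A(\theta) = A_0 e^{F_0(\theta)}$ with $\|F_0\|_\delta \leq \epsilon$, I would set up the usual KAM sequences $\epsilon_j = \epsilon^{(3/2)^j}$, $\delta_j = \delta(1 - \sum_{k<j}2^{-k-1})$, and Fourier cutoffs $N_j \sim |\log \epsilon_j|/(\delta_j - \delta_{j+1})$ chosen so that the truncation error of $F_j$ at level $N_j$ is majorized by $\epsilon_j^{3/2}$.

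The core of the proof is an inductive step: given $(\alpha, A_j e^{F_j(\theta)})$ with $\|F_j\|_{\delta_j} \leq \epsilon_j$ and $A_j \in \mathrm{SL}(2,\R)$ bounded, construct $B_j \in C^\omega_{\delta_{j+1}}(2\T^d, \mathrm{SL}(2,\R))$ with $B_j(\theta+\alpha)^{-1}A_j e^{F_j(\theta)}B_j(\theta) = A_{j+1} e^{F_{j+1}(\theta)}$ and $\|F_{j+1}\|_{\delta_{j+1}} \leq \epsilon_{j+1}$. After diagonalizing $A_j$ in $\mathrm{SL}(2,\C)$ with eigenvalues $e^{\pm 2\pi i \rho_j}$, this reduces to solving the cohomological equation $Y(\theta+\alpha) - e^{\pm 4\pi i \rho_j} Y(\theta) = \widehat{F}_j^{\mathrm{nr}}(\theta)$ mode by mode, where the denominators are $e^{2\pi i(\langle k,\alpha\rangle \pm 2\rho_j)} - 1$. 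Bounds on the denominators use the Diophantine condition $\alpha \in DC_d(\gamma,\tau)$ together with a nonresonance cut: modes with $\|2\rho_j - \langle k,\alpha\rangle\|_{\R/\Z} > \epsilon_j^{1/15}$ and $|k| \leq N_j$ are treated by direct Fourier division, while all other modes of $F_j$ are absorbed into $A_{j+1}$ by a preliminary conjugation.

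The main obstacle, and the whole content of the theorem, is the \emph{resonant step}: if there exists $|n| \leq N_j$ with $\|2\rho_j - \langle n,\alpha\rangle\|_{\R/\Z} \leq \epsilon_j^{1/15}$, one first conjugates by the $2\T^d$-valued map $Q_n(\theta) = R_{-\langle n,\theta\rangle/2}$ to subtract the resonant rotation, reducing to a cocycle whose new constant part has rotation number of order $\epsilon_j^{1/15}$. This move costs a factor of at most $\epsilon_j^{-1/192}$ in $\|B_j\|_0$ and raises $|\deg B_j|$ by $|n| \leq 2N_{j-1}$. One then performs a non-resonant KAM step on the conjugated cocycle, using that after the subtraction there are no further resonances inside $[-N_j, N_j]$ except possibly at $k=0$, which is absorbed into $A_{j+1}$. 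Under either hypothesis of the theorem, the crux is to argue that resonances occur only finitely often along the iteration: in the exact-resonance case $2\rho(\alpha,A) = \langle n_*, \alpha\rangle$, the single resonance is removed at the first step $j$ with $N_j \geq |n_*|$, after which the new rotation number lies below every subsequent threshold $\epsilon_k^{1/15}$; in the Diophantine case $\|2\rho(\alpha,A) - \langle k,\alpha\rangle\|_{\R/\Z} \geq \kappa/|k|^\tau$, super-exponential decay of $\epsilon_j$ forces resonance events to stop once $\epsilon_j^{1/15} < \kappa/N_j^\tau$.

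Finally, to pass from almost reducibility to analytic reducibility I would show that the infinite product $B_\infty(\theta) = \lim_{j\to\infty} B_0(\theta) B_1(\theta) \cdots B_j(\theta)$ converges in $C^\omega_{\delta/2}(\T^d, \mathrm{SL}(2,\R))$. Because only finitely many resonant steps occur, beyond some index $j_0$ all further $B_j$ are $\epsilon_{j-1}^{1/2}$-close to the identity on the strip of width $\delta_j$, giving a telescoping product that converges geometrically. The image $A_\infty = \lim_j A_j$ is a well-defined element of $\mathrm{SL}(2,\R)$ and $B_\infty(\theta+\alpha)^{-1} A(\theta) B_\infty(\theta) = A_\infty$, yielding the required analytic reducibility. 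A minor technical point to handle carefully is that the intermediate $Q_{n}$ conjugations are defined on $2\T^d$ rather than $\T^d$, so $B_\infty$ a priori lies in $C^\omega(2\T^d, \mathrm{SL}(2,\R))$; an even/odd parity argument on the total degree (or the fact that $\mathrm{SL}(2,\R)$ conjugation is by $\mathrm{PSL}(2,\R)$) then yields a representative in $C^\omega(\T^d, \mathrm{PSL}(2,\R))$, which suffices for the reducibility conclusion as formulated.
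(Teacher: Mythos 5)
The paper does not prove this statement; it is imported wholesale from Eliasson \cite{eliasson} (in its cocycle/$\mathrm{SL}(2,\R)$ formulation, refined in later works), so there is no internal argument to compare against. Your sketch is a reasonable reconstruction of the standard KAM scheme: diagonalize, solve the cohomological equation for the non-resonant Fourier modes with small-divisor bounds from $\alpha\in\mathrm{DC}_d(\gamma,\tau)$, handle a resonant mode $n$ by conjugating with $Q_n(\theta)=R_{-\langle n,\theta\rangle/2}$ to subtract the resonant rotation, argue that resonances occur only finitely often, and pass to the limit of the conjugations. The handling of the double cover $2\T^d$ by descending to $\mathrm{PSL}(2,\R)$ is also correct. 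Two points deserve a more careful statement than you gave. First, in the exact-resonance case $2\rho(\alpha,A)=\langle n_*,\alpha\rangle$ there is no reason that $n_*$ itself is the mode removed at step $j$; the scheme resonates against $\rho_j$, which only approximates the conjugated image of $\rho(\alpha,A)$, and it may resolve other $k$'s first. What saves the argument is that once the threshold $\epsilon_j^{1/15}$ drops below the Diophantine gap $\gamma/N_j^\tau$ of $\alpha$, a near-zero $\rho_j$ cannot resonate with any $0\neq k$, $|k|\le N_j$; you should phrase the termination this way rather than as ``the single resonance at $n_*$ is removed.'' Second, when you transfer the Diophantine hypothesis on $\rho(\alpha,A)$ to $\rho_j$, the accumulated degree $D_j=\sum_{i<j}\deg B_i$ enters as $\|2\rho(\alpha,A)-\langle D_j+k,\alpha\rangle\|\ge\kappa/|D_j+k|^\tau$; one must check explicitly that $|D_j|\lesssim N_{j-1}$ grows only exponentially while $(\kappa/\epsilon_j^{1/15})^{1/\tau}$ grows doubly exponentially, which is what makes resonances terminate. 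With those two points made precise the sketch is sound.
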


It follows that $ \mathcal{R}\backslash\mathcal{B} $ is countable: indeed for any such $E$, the well known gap labeling theorem ensures that there exists a $k\in \Z^{d}$ such that $2\rho (\alpha,S^{v}_E) =\langle k,\alpha \rangle \mod \mathbb{Z}$. We further prove $E\in \mathcal{R}\backslash\mathcal{B} $ is not an eigenvalue of $ \widetilde{H}_{v,\alpha,\theta}$.
The following observation is important for us, a similar idea appears in Coddington-Levinson \cite[Chapter 3, Exercise 35]{CL1994Theory}.

        \begin{lemma}\label{lem5.6}
     Let $c\neq 0$ and suppose that 
            \begin{equation} \label{5.7}
                \vec{\varphi} (n+1)=\left(A+R(n)\right)\vec{\varphi}(n),
            \end{equation}  
            where
            \[
               A=\begin{pmatrix}
                    \pm 1&c\\
                    0&\pm 1
                \end{pmatrix}, \qquad \sum |R(n)||n|<\infty.
            \]
           Then any non-zero solution of (\ref{5.7}) does not tend to zero when $ n\to \infty $. Moreover, for any $ \epsilon>0 $, we can find a solution $ \vec{\phi}_{\pm}(n) $ such that 
           \[
            \limsup_{n\to\pm\infty}\|\vec{\phi}_{\pm}(n)-\begin{pmatrix}
                \pm 1\\0
            \end{pmatrix}\|\leq \epsilon.
           \]  
        \end{lemma}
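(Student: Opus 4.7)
The plan is to construct Jost-type solutions $\vec{\phi}_\pm$ with prescribed asymptotics as $n\to\pm\infty$, and then to use a Wronskian argument to rule out any decaying solution. I will focus on $\vec{\phi}_+$ in the ``$+$'' case $A=I+cN$ with $N=\bigl(\begin{smallmatrix}0&1\\0&0\end{smallmatrix}\bigr)$; the ``$-$'' case $A=-I+cN$ reduces to this one via the substitution $\vec{\psi}(n):=(-1)^n\vec{\varphi}(n)$, which satisfies a recurrence with the ``$+$''-parabolic matrix $I-cN$ and perturbation $-R(n)$ having the same first moment, and $\vec{\phi}_-$ is constructed symmetrically by running the same scheme backward from $-\infty$.

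For the construction, I will write $\vec{\phi}_+(n)=e_1+\vec{\eta}(n)$ with $e_1=(1,0)^\top$. Because $Ae_1=e_1$, the equation for $\vec{\eta}$ is
\[
\vec{\eta}(n+1)=A\vec{\eta}(n)+R(n)\bigl(e_1+\vec{\eta}(n)\bigr).
\]
Variation of constants summed from $+\infty$ converts this into the Jost-type fixed-point equation
\[
\vec{\eta}(n)=-\sum_{k=n}^{\infty} A^{n-k-1}\,R(k)\bigl(e_1+\vec{\eta}(k)\bigr).
\]
Using the elementary bound $\|A^j\|\le 1+|c|\,|j|$, the $k$-th summand is controlled by $(1+|c|(k-n+1))\|R(k)\|(1+\|\vec{\eta}(k)\|)$, so the hypothesis $\sum_n |n|\,|R(n)|<\infty$ yields absolute convergence. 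A standard contraction argument on $\{\vec{\eta}:\sup_{n\ge N_0}\|\vec{\eta}(n)\|\le \delta\}$, with $N_0$ chosen so that the tail $\sum_{k\ge N_0}(1+|c|(k+1))\|R(k)\|$ is smaller than a universal constant, produces the fixed point; the solution is then extended backward to all integers by the recurrence. Taking $N_0$ large makes $\limsup_{n\to+\infty}\|\vec{\phi}_+(n)-e_1\|\le \epsilon$.

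For the non-decay statement, I will assume for contradiction that $\vec{\varphi}\not\equiv 0$ solves \eqref{5.7} with $\vec{\varphi}(n)\to 0$ as $n\to\infty$. Consider the Wronskian $W(n):=\det[\vec{\phi}_+(n),\vec{\varphi}(n)]$; it obeys $W(n+1)=\det(A+R(n))\,W(n)$. Since $\det A=1$ and $\det(A+R(n))=1+O(\|R(n)\|)$ with $\sum \|R(n)\|<\infty$ (which follows a fortiori from the first-moment hypothesis), the infinite product $\prod_{k}\det(A+R(k))$ converges, so $W(n)$ tends to a non-zero multiple of $W(0)$ whenever $W(0)\neq 0$. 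On the other hand $\vec{\phi}_+(n)\to e_1$ and $\vec{\varphi}(n)\to 0$ force $W(n)\to \det[e_1,0]=0$, hence $W(0)=0$. Then $\vec{\varphi}(0)$ must be a scalar multiple of $\vec{\phi}_+(0)$, and by uniqueness of solutions $\vec{\varphi}=\lambda\vec{\phi}_+$ for some $\lambda\in\R$. Since $\vec{\phi}_+(n)\to e_1\neq 0$ but $\vec{\varphi}(n)\to 0$, we are forced into $\lambda=0$, contradicting $\vec{\varphi}\not\equiv 0$.

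The main obstacle will be to push the Jost construction through under the optimal moment hypothesis $\sum |n||R(n)|<\infty$. A naive gauge transform $\vec{\varphi}(n)=A^n\vec{w}(n)$ conjugates $R(n)$ to $A^{-(n+1)}R(n)A^n$, whose $(1,2)$ entry has size $\sim n^2|R(n)|$, and this \emph{is not} summable under a first-moment assumption. The saving observation is that in the variation-of-constants formula I only ever apply $A^{n-k-1}$ to the source $R(k)(e_1+\vec{\eta}(k))$ of norm $\lesssim \|R(k)\|$; the nilpotent part of $A^{n-k-1}$ therefore contributes at most one factor $(k-n+1)$ rather than two, and this single linear loss is exactly absorbed by the moment. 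The argument is in the same spirit as Lemma~\ref{new-gron}, where an a priori linear growth bound for the unperturbed transfer matrix (Corollary~\ref{uniform}) is parlayed into a linear bound for the perturbed one under the same first-moment hypothesis.
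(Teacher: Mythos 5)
Your construction of $\vec{\phi}_+$ is correct and in the same Jost-type spirit as the paper, but the two proofs diverge on the ``no nonzero decaying solution'' part, and your route is genuinely different. The paper constructs \emph{two} solutions via a two-sided variation-of-constants formula built from the splitting $\Phi(n-s)=\Phi_1(s)+\Phi_2(n)$: a bounded solution $\vec{\phi}(n)\approx(1,0)^\top$ and a linearly growing one $\vec{\psi}(n)\sim(cn,1)^\top$; it then checks that these are linearly independent, so that an arbitrary solution $a\vec{\psi}+b\vec{\phi}$ either grows (if $a\neq 0$) or stays bounded away from zero (if $a=0$, $b\neq 0$). You instead construct only the bounded solution, using the simpler one-sided summation $\vec{\eta}(n)=-\sum_{k\ge n}A^{\,n-k-1}R(k)(e_1+\vec{\eta}(k))$ (which works because, as you correctly observe, the nilpotent part of $A^{\,n-k-1}$ costs only one power of $k-n+1$, absorbed by the first moment), and then rule out decaying solutions via the Abel/Wronskian identity $W(n+1)=\det(A+R(n))\,W(n)$. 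This is cleaner: it avoids the two-sided decomposition and the second construction entirely. The price is that the Wronskian argument needs the infinite product $\prod_k\det(A+R(k))$ to converge to a nonzero limit, which requires $\det(A+R(n))\neq 0$ for all $n$; but the paper's approach needs the same invertibility implicitly (otherwise a nonzero solution that vanishes identically beyond some index would be a counterexample to the first assertion), and in the application $A+R(n)$ is conjugate to a unimodular transfer matrix, so $\det(A+R(n))=1$ exactly. One small inaccuracy in your write-up: you say $\vec{\phi}_+(n)\to e_1$, but the lemma and your own construction only directly give $\limsup\|\vec{\phi}_+(n)-e_1\|\le\epsilon$; for the Wronskian argument all you need is $\|\vec{\phi}_+(n)\|$ bounded and $\vec{\varphi}(n)\to 0$, so $|W(n)|\le\|\vec{\phi}_+(n)\|\,\|\vec{\varphi}(n)\|\to 0$, and the conclusion is unaffected.
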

        \begin{proof}
           Without loss of generality, we consider $ A=\begin{pmatrix}1&c\\0&1\end{pmatrix} $. Denote by $ \Phi(n) $ the  fundamental matrix of $ \vec{\varphi}(n+1)=A\vec{\varphi}(n) $ and decompose
            \[
                \begin{aligned}
                    \Phi(n-s)&=\begin{pmatrix}
                        1&c(n-s)\\
                        0&1
                    \end{pmatrix}=\begin{pmatrix}
                        1&-cs\\
                        0&0
                    \end{pmatrix}+\begin{pmatrix}
                        0&cn\\
                        0&1
                    \end{pmatrix}:=\Phi_1(s)+\Phi_2(n).
                \end{aligned}
            \] 
            It follows that
            \[
                \begin{aligned}
                    \|\Phi_1(s)\|&\leq K_1 s,\  s\geq 1,\\
                \|\Phi_2(n)\|&\leq K_2 n,\  n\geq 1.
                \end{aligned}
            \] 
            Let $ \vec{\psi}_0(n)=\Phi(n)\begin{pmatrix}
                0\\1
            \end{pmatrix} = \begin{pmatrix}
               cn\\1
            \end{pmatrix}  $, $ \ n\geq 1 $. One has
            \[
                \|\vec{\psi}_0(n)\|\leq K_0 n.
            \]

          Choose  $ n_0 $ which is large  enough such that
            \[
                (K_1+K_2)\sum_{s=n_0}^{\infty}|R(s)|s<\frac{1}{2},
            \] 
and define the sequence
            \[
                \vec{\psi}_{i+1}(n)=\begin{pmatrix}cn\\1\end{pmatrix} +\sum_{s=n_0+1}^n\Phi_1(s)R(s-1)\vec{\psi}_{i}(s-1)-\sum_{s=n+1}^\infty\Phi_2(n)R(s-1)\vec{\psi}_{i}(s-1).
            \] 
A direct computation shows that
            \[
                \begin{aligned}
                    \|\vec{\psi}_{i+1}(n)-\vec{\psi}_{i}(n)\|&\leq \frac{K_0 n}{2^{i+1}},
                \end{aligned}
            \]
            which means that there exists a limit function $ \vec{\psi}(n) $ on  $ n\geq n_0 $ that satisfies
            \[
                \|\vec{\psi}(n)\|\leq 2K_0 n,
            \] 
            \begin{equation}\label{iter}
                \vec{\psi}(n)=\begin{pmatrix}cn\\1\end{pmatrix}+\sum_{s=n_0+1}^n\Phi_1(s)R(s-1)\vec{\psi}(s-1)-\sum_{s=n+1}^\infty\Phi_2(n)R(s-1)\vec{\psi}(s-1).
            \end{equation}
            It is easy to verify that  $ \vec{\psi}(n)$ is a solution of \eqref{5.7}. Next we give an estimate for $ \vec{\psi}(n)$. 
           
      By \eqref{iter}, we have
            \[
                \begin{aligned}
                    n^{-1}\|\vec{\psi}(n)-\begin{pmatrix}cn\\1\end{pmatrix}\| & \leq 2K_0K_1\left(\frac{1}{\sqrt{n}}\sum_{s=n_0}^{\left\lfloor \sqrt{n}\right\rfloor +1}|R(s)|s+\sum_{s=\left\lfloor \sqrt{n}\right\rfloor+2}^{n-1}|R(s)|s\right)\\
                    & \quad + 2K_0K_2\sum_{s=n}^\infty |R(s)|s\\
                    & \leq 2K_0K_1 \frac{1}{\sqrt{n}}\sum_{s=n_0}^{\left\lfloor \sqrt{n}\right\rfloor +1}|R(s)|s+2K_0(K_1+K_2)\sum_{s=\left\lfloor \sqrt{n}\right\rfloor+2}^{\infty}|R(s)|s\to 0.
                \end{aligned}
            \]

       If we let $ \vec{\phi}_0(n)=\Phi(n)\begin{pmatrix}
                1\\0
            \end{pmatrix} =\begin{pmatrix}
                1\\0
            \end{pmatrix} $, $ n\geq 1 $, we have
            \[
                \|\vec{\phi}_0(n)\|\leq K_0 .
            \]
            Following the same reasoning as above, if we choose $ n_0 $ such that
            \[
                K_0(K_1+K_2)\sum_{s=n_0}^{\infty}|R(s)|(s+1)<\epsilon,
            \] 
            we see that there exists a limit function $ \vec{\phi}(n) $ which satisfies
            \[
                \|\vec{\phi}(n)\|\leq 2K_0 , \text{ for } n\geq n_0,
            \] 
            \[
                \vec{\phi}(n)=\begin{pmatrix}1\\0\end{pmatrix}+\sum_{s=n_0+1}^n\Phi_1(s)R(s-1)\vec{\phi}(s-1)-\sum_{s=n+1}^\infty\Phi_2(n)R(s-1)\vec{\phi}(s-1),
            \] 
         and which is also a solution of 
         \eqref{5.7}.  A direct computation shows that
            \[
                \|\vec{\phi}(n)-\begin{pmatrix}
                    1\\0
                \end{pmatrix}\|< \epsilon, \text{ for } n\geq n_0.
            \] 
    The proof of the statement on the left half line is similar.
    Finally, it is easy to check that $ \vec{\psi}(n) $ and $ \vec{\phi}(n) $ are linearly independent. This completes the proof.
        \end{proof}

Suppose $ (\alpha,S_E^v) $ is analytically reducible to a parabolic matrix, that is, there exists $ B\in C^\omega(2\T^d,\mathrm{SL}(2,\R)) $ such that 
        \[
            B(\theta+(n+1)\alpha)^{-1}S_E^v(\theta+n\alpha)B(\theta+n\alpha)=\begin{pmatrix}
                \pm 1&c\\
                0&\pm 1
            \end{pmatrix}.
        \] 
        Then the second order difference equation
        \[
            \left(\begin{matrix}
                \tilde{u}_{n+1}\\\tilde{u}_n
            \end{matrix}\right)=\widetilde{S}_E^v(\alpha,\theta,n)\cdot\left(\begin{matrix}
                \tilde{u}_n\\\tilde{u}_{n-1}
            \end{matrix}\right)
        \] 
        can be transformed to
        \[
            \vec{\varphi}(n+1)=\left(\begin{pmatrix}
                \pm 1&c\\
                0&\pm 1
            \end{pmatrix}+R(n)\right)\cdot \vec{\varphi}(n)
        \] 
The assumption  $\sum_{n  } | n| |g(n)| <\infty$ implies that $\sum |R(n)||n|<\infty$. It then follows from Lemma~\ref{lem5.6} that any non-zero solution $ \widetilde{H}_{v,\alpha,\xi}\widetilde{u}=E\widetilde{u} $ satisfies $$ 
\inf_{n\in\Z^{+}}|\widetilde{u}_{n+1}|^2+|\widetilde{u}_{n}|^2>0,
$$
that is, there are no eigenvalues in $\mathcal{R}\backslash\mathcal{B} $.

Thus it remains to prove that $ \widetilde{\mu}(\Sigma\backslash\mathcal{R})=0 $. To prove this, we recall the following result of Avila \cite{avila2008absolutely}, which is essentially  the Jitomirskaya-Last inequality \cite{jitomirskaya1999power}:
\begin{lemma}\label{lem3.3}
We have 
$$
\widetilde{\mu}(E-\epsilon,E+\epsilon)\leq C\sup_{0<|s|<C\epsilon^{-1}}\lVert \widetilde{M}_s(\alpha,\theta,E,0)\rVert_0^2, 
$$
where $ C>0 $ is a universal constant and $ \widetilde{\mu} $ is the canonical spectral measure of $ \widetilde{H}_{v,\alpha,\theta} $.
\end{lemma}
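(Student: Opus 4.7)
The plan is to establish this as an instance of the Jitomirskaya--Last inequality \cite{jitomirskaya1999power} in the transfer-matrix formulation of Avila \cite{avila2008absolutely}, applied to the perturbed operator $\widetilde{H}_{v,\alpha,\theta}$. Since the Jitomirskaya--Last argument depends only on the one-dimensional Jacobi structure and on the fact that the one-step transfer matrices $\widetilde{M}_s$ lie in $\mathrm{SL}(2,\R)$, rather than on any quasi-periodic structure of the potential, the proofs in the cited works apply \emph{verbatim}, with the unperturbed transfer matrices replaced throughout by the perturbed matrices $\widetilde{M}_s(\alpha,\theta,E,0)$.

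In more detail, I would proceed in three steps. First, reduce to the Borel transform: writing $F_j(z) = \langle \delta_j, (\widetilde{H}_{v,\alpha,\theta} - z)^{-1}\delta_j\rangle$ for $j \in \{-1, 0\}$ (a cyclic pair for the whole-line operator), the Poisson representation of a Herglotz function yields
\[
\tilde{\mu}(E-\epsilon, E+\epsilon) \leq 2\epsilon \, \bigl(\mathrm{Im}\, F_{-1}(E+i\epsilon) + \mathrm{Im}\, F_{0}(E+i\epsilon)\bigr).
\]
Second, factor each $F_j$ through the half-line Weyl--Titchmarsh $m$-functions $m_\pm(E+i\epsilon)$ and apply the Lagrange identity
\[
\mathrm{Im}\, m_\pm(E+i\epsilon) = \epsilon \sum_{\pm n \geq 0} |\psi_\pm(n)|^2,
\]
where $\psi_\pm$ are the $\ell^2$ Weyl solutions at $\pm \infty$. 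A cutoff analysis at the length scale $L \sim \epsilon^{-1}$, which is the core of the Jitomirskaya--Last method, then gives the intermediate bound
\[
\tilde{\mu}(E-\epsilon, E+\epsilon) \leq \frac{C}{\|\psi_+\|_L \cdot \|\psi_-\|_L}.
\]

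Third, convert this to the transfer-matrix form. Using $\binom{\psi(s+1)}{\psi(s)} = \widetilde{M}_s(\alpha,\theta,E,0) \binom{\psi(1)}{\psi(0)}$ together with the Wronskian identity $W = \psi_+(s)\psi_-(s+1) - \psi_+(s+1)\psi_-(s)$ (constant in $s$ and nonzero for linearly independent solutions), Cauchy--Schwarz yields $L|W| \leq 2\|\psi_+\|_L \cdot \|\psi_-\|_L$. Since $\widetilde{M}_s \in \mathrm{SL}(2,\R)$ gives uniform two-sided control on the singular values, the individual norms $\|\psi_\pm\|_L$ can be bounded in terms of $\sup_{|s|\leq L}\|\widetilde{M}_s\|_0$, and choosing $L = C\epsilon^{-1}$ produces the claimed inequality.

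The principal obstacle is the cutoff analysis in the second step, which requires a delicate Weyl-disk computation to produce the correct $L$-dependence balancing $\epsilon$ against the growth of $\|\psi_\pm\|_L$. This step is classical, however, and needs no adaptation in the perturbed setting: the decaying potential $g(n)\delta_{nn'}$ enters the analysis only through $\widetilde{M}_s$ and does not alter the $\mathrm{SL}(2,\R)$ structure on which the Jitomirskaya--Last mechanism relies.
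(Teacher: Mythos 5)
The paper does not actually prove this lemma; it simply quotes it as a result of Avila \cite{avila2008absolutely} that is ``essentially the Jitomirskaya--Last inequality,'' and the implicit justification is exactly your opening observation: the subordinacy argument uses only the Jacobi structure and the unimodularity of the one-step matrices, so it transfers verbatim from $M_s$ to $\widetilde M_s$. Your overall architecture (Borel transform, half-line $m$-functions, cutoff at scale $L\sim\epsilon^{-1}$, conversion to transfer-matrix norms) is therefore the intended proof.

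Two points in Steps 2--3 need repair. (i) You take $\psi_\pm$ to be the $\ell^2$ Weyl solutions at the complex energy $E+i\epsilon$ --- as the Lagrange identity $\mathrm{Im}\,m_\pm(E+i\epsilon)=\epsilon\sum|\psi_\pm(n)|^2$ requires --- but then propagate them by $\widetilde M_s(\alpha,\theta,E,0)$, the transfer matrices at the \emph{real} energy $E$. Those solutions in fact propagate by the matrices at $E+i\epsilon$, which lie in $\mathrm{SL}(2,\C)$ and are not the matrices in the statement; moreover, the bound you need on $\|\psi_\pm\|_L$ is a \emph{lower} bound (so that $C/(\|\psi_+\|_L\|\psi_-\|_L)$ is bounded above), and that lower bound is exactly what unimodularity of the real-energy matrices supplies for a solution with normalized initial data. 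The standard Jitomirskaya--Last route avoids the mismatch: one compares $|m_\pm(E+i\epsilon)|$ to the ratio $\|u^N\|_{L}/\|u^D\|_{L}$ of the normalized Neumann/Dirichlet pair at the real energy $E$, with $L=L(\epsilon)$ defined by $\|u^D\|_L\|u^N\|_L=(2\epsilon)^{-1}$; your Wronskian/Cauchy--Schwarz step then shows $L\le\epsilon^{-1}$, and unimodularity gives $\|u^N\|_L\le C\sqrt{L}\,\sup_{|s|\le L}\|\widetilde M_s\|$ and $\|u^D\|_L\ge c\sqrt{L}/\sup_{|s|\le L}\|\widetilde M_s\|$, hence $|m_\pm|\le C\sup\|\widetilde M_s\|^2$. (ii) This argument produces $\widetilde\mu(E-\epsilon,E+\epsilon)\le C\epsilon\sup_{0<|s|<C\epsilon^{-1}}\|\widetilde M_s\|_0^2$, with a factor of $\epsilon$ that is missing from the statement as printed but is precisely what the paper uses immediately afterwards to obtain $\widetilde\mu(J_m(E))\le C''\epsilon_{m-1}^{-1/48}|J_m(E)|$; your derivation should retain it.
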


For $ E\in K_m $, let $ J_m(E) $ be an open $ 2^{\frac{2}{3}}\epsilon_{m-1}^{\frac{2}{45}} $-neighborhood of $ E $. By \eqref{lemC}, we have 
$$
\sup_{0<|s|<C\epsilon_{m-1}^{-\frac{2}{45}}}\lVert \widetilde{M}_s(\alpha,\theta,E,0)\rVert_0\leq C''\epsilon_{m-1}^{-\frac{1}{96}}, 
$$
and therefore, by Lemma \ref{lem3.3}, we have
$$
    \widetilde{\mu}(J_m(E))\leq C''\epsilon_{m-1}^{-\frac{1}{48}}|J_m(E)|,     
$$ 
where $ |\cdot| $ denotes Lebesgue measure. Take a finite subcover $ \overline{K_m}\subset \cup_{j=0}^r J_m(E_j) $. Refining this subcover if necessary, we may assume that every $ x \in \overline{K_m} $ is contained in at most $ 2 $ different $ J_m(E_j) $. 
    
    For $ E\in K_m $, (\ref{xxxx}) implies that $ 2\rho(K_m) $ can be covered by $ 2N_{m-1} $ intervals $ T_s $ of length $ 2\epsilon_{m-1}^{\frac{1}{15}} $.  Recall the following lower bound for the integrated density of states $ N(E) $ of the unperturbed operator $ H_{v,\alpha,\theta} $:
     
\begin{lemma}[\cite{avila2008absolutely}]\label{lem3.11}
    If $ E\in\Sigma $, then for $ 0<\epsilon<1 $, $ N(E+\epsilon)-N(E-\epsilon)\geq c\epsilon^{\frac{3}{2}} $.
\end{lemma}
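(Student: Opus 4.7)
The plan is to combine the Thouless formula
$$L(\alpha,E')=\int_{\R}\log|E'-\widetilde{E}|\,dN(\widetilde{E})$$
with the uniform polynomial transfer-matrix bound $\|M_n(\alpha,\theta,E,0)\|\le C'|n|$ from Corollary~\ref{uniform}, valid for every $E\in\Sigma$ and $\theta\in\T^d$. Since we work in the subcritical (almost reducible) regime, $L(\alpha,E)=0$ for every $E\in\Sigma$, so Thouless degenerates to $\int\log|E-\widetilde{E}|\,dN(\widetilde{E})=0$, and the question becomes how to read off the local structure of $N$ near $E$ from this vanishing together with the size of $\|M_n\|$.

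First I would upgrade Thouless to complex energies $E+i\eta$ with $\eta>0$, using
$$L(\alpha,E+i\eta)=\tfrac12\int\log\bigl((E-\widetilde{E})^2+\eta^2\bigr)\,dN(\widetilde{E}).$$
The linear bound on $M_n(E)$ together with the $O(\eta n^2)$ size of the derivative $\partial_E M_n$, balanced at the scale $n\sim\eta^{-1/2}$, yields $L(\alpha,E+i\eta)\le C\sqrt{\eta}\,|\log\eta|$. Pairing this with $L(\alpha,E)=0$ already gives $N(E+\eta)-N(E-\eta)\lesssim \sqrt{\eta}\,|\log\eta|$, which is a matching upper bound and confirms that the subharmonic bookkeeping on the complex strip is tight at the correct scale.

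For the lower bound claimed by the lemma, a direct $\theta$-wise application of Jitomirskaya-Last (Lemma~\ref{lem3.3}) together with $\sup_{|s|<1/\epsilon}\|M_s\|\le C'/\epsilon$ only gives $\mu_{v,\alpha,\theta}(E-\epsilon,E+\epsilon)\ge c\epsilon^2$, hence $N(E+\epsilon)-N(E-\epsilon)\ge c\epsilon^2$, which falls short of the claimed $c\epsilon^{3/2}$. Bridging this gap is the main obstacle, and my plan is to follow Avila's argument in \cite{avila2008absolutely}: exploit the gap labeling identity $N(E)=1-2\rho_{v,\alpha}(E)$ together with the quantitative almost reducibility of Proposition~\ref{Prop3.1}, so that once $(\alpha,S_E^v)$ is conjugated into an $\epsilon_j$-small neighborhood of a constant in a uniform complex strip, one extracts a quantitative modulus of monotonicity for $\rho_{v,\alpha}(\cdot)$ along the $E$-axis of precisely $3/2$-type, and transfer it back to $N$ via the gap labeling. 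The most delicate point will be patching the KAM scales $K_j$ uniformly in $j$, so that no factor of $\epsilon^{3/2}$ is lost at the resonance boundaries between two consecutive layers of the KAM hierarchy; this is where the explicit control on $c_j$ and $\|B_jU_j\|$ from Proposition~\ref{Prop3.1} has to be invoked in a uniform way.
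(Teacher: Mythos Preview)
The paper does not prove this lemma at all; it is quoted verbatim from \cite{avila2008absolutely} and used as a black-box input to the spectral-measure estimate that follows. There is therefore no ``paper's proof'' to compare your proposal against.

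That said, your proposal is not a proof either --- it is a plan, and the plan has a genuine gap precisely at the point that matters. Your Thouless/complex-energy paragraph is correct as far as it goes, but as you yourself observe it produces only the \emph{upper} bound $N(E+\eta)-N(E-\eta)\lesssim\sqrt{\eta}\,|\log\eta|$, which is the wrong direction for the lemma. Your Jitomirskaya--Last paragraph is also honest: Lemma~\ref{lem3.3} is an upper bound on $\widetilde\mu$, and the companion lower-bound form of J--L combined with $\|M_s\|\le C's$ gives at best a power strictly worse than $3/2$ (in fact your claimed $\epsilon^{2}$ is optimistic --- with $\sup_{|s|<1/\epsilon}\|M_s\|\sim 1/\epsilon$ the naive inequality $\mu\gtrsim\epsilon/\sup\|M_s\|^{2}$ yields $\epsilon^{3}$). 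So the entire content of the lemma lies in the paragraph where you write ``my plan is to follow Avila's argument'' and then describe, in words only, ``a quantitative modulus of monotonicity for $\rho_{v,\alpha}$ of precisely $3/2$-type'' extracted from Proposition~\ref{Prop3.1}. No such extraction is carried out, and it is not clear how tracking the $E$-dependence of $A_j$, $B_j$, $c_j$ through the KAM hierarchy would produce the exponent $3/2$ rather than some other power; the ``patching of KAM scales'' difficulty you flag is real and unresolved in your outline.

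It is also worth noting that Avila's own argument in \cite{avila2008absolutely} is short and does \emph{not} go through the KAM machinery of Proposition~\ref{Prop3.1}; it uses only that $E\in\Sigma$ (equivalently, that the cocycle is not uniformly hyperbolic) together with the Thouless formula, and is valid for general one-frequency analytic potentials, not just the perturbative regime. Your proposed route through quantitative almost reducibility is therefore both incomplete and considerably heavier than what is actually needed. If you want to supply a proof here rather than cite one, you should look up Avila's direct argument rather than try to rebuild it from the KAM scheme.
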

  
    By Lemma \ref{lem3.11}, $ |N(J_m(E))|\geq c|J_m(E)|^{\frac{3}{2}} $, we have $ |T_s|\leq \frac{1}{c}|2\rho(J_m(E))| $ for any $ s $, $ E\in K_m $. We conclude that there are at most $ r\leq (2([\frac{1}{c}]+1)+4)N_{m-1} $ intervals $ T_s $ to cover $ K_m $, i.e. $ r\leq CN_{m-1} $. Then 
    $$
        \widetilde{\mu}(\overline{K_m})\leq \sum_{j=0}^{r}\widetilde{\mu}(J_m(E_j))\leq CN_{m-1}C''\epsilon_{m-1}^{\frac{1}{48}},
    $$ 
    which gives $ \sum_m \widetilde{\mu}(\overline{K_m})<\infty $, then by Borel-Cantelli lemma, $$ \widetilde{\mu}(\Sigma\backslash\mathcal{R})\leq \widetilde{\mu}(\limsup K_m)=0 ,$$
and the result follows.
\subsection{Proof of  Theorem \ref{mainmain} (2):}
In this subsection we prove (2) of Theorem \ref{mainmain}. Avila's \emph{almost reducibility conjecture} (ARC) will play a role. The full solution of the ARC was recently given by Avila in \cite{avila2010almost2, avila2010almost}.

\begin{theorem}[\cite{avila2010almost2, avila2010almost}]\label{ART}
    Given $ \alpha\in \R\backslash\Q $ and $ A\in C^\omega(\T,\mathrm{SL}(2,\R)) $, if $ (\alpha,A) $ is subcritical, then it is almost reducible.
\end{theorem}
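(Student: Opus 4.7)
The plan is to combine two complementary ingredients: a local, perturbative almost reducibility statement of KAM type, and a renormalization procedure that transports any subcritical cocycle into the perturbative regime where the local statement applies. Both ingredients are by now standard in the quasi-periodic cocycle literature, but the delicate point is to make their combination work for the entire subcritical locus rather than only for a neighborhood of constants.

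First, I would invoke the Eliasson-type local KAM statement quoted just before Lemma~\ref{lem5.6}: any $A \in C^\omega_\delta(\T, \mathrm{SL}(2,\R))$ sufficiently close to a constant $A_\ast \in \mathrm{SL}(2,\R)$ is reducible when the fibered rotation number of $(\alpha, A)$ satisfies the Diophantine/resonance condition stated there, and—with only mild modifications of the same scheme, as in Proposition~\ref{Prop3.1}—is almost reducible in general. Consequently, the conclusion of the theorem already holds on an analytic neighborhood of the constant locus, and the problem reduces to producing an analytic conjugacy that brings a given subcritical cocycle into this neighborhood.

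Second, for this global-to-local step the plan is to use Avila's renormalization operator on one-frequency analytic $\mathrm{SL}(2,\C)$ cocycles, which acts by replacing $\alpha$ by its Gauss iterates and simultaneously iterating the fiber action, and which preserves (almost) reducibility up to explicit algebraic conjugacies that can be pulled back at the end. The crucial quantitative input is the subcriticality hypothesis together with Avila's global theory: the acceleration $\omega(\alpha, A_z)$, defined as the one-sided derivative in $\Im z$ of $L(\alpha, A_z)$, is a non-negative integer and vanishes identically on the strip $|\Im z| \le \delta$ when $(\alpha, A)$ is subcritical. This rigidity—vanishing of the Lyapunov exponent together with vanishing acceleration on a whole strip—provides uniform complex-analytic bounds on the iterates of $A_z$, which, via a Chebyshev/Jensen-type extraction, translate into pre-compactness of the renormalization orbit in the analytic topology on some (possibly smaller) strip.

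Chaining the two ingredients then gives the proof: renormalize until the cocycle lies in the perturbative neighborhood of a constant $A_\ast$; apply the local almost reducibility theorem; pull back the resulting conjugations through the renormalization to obtain almost reducibility of the original $(\alpha, A)$. The hardest step—and the essential content of Avila's argument—is the analytic compactness of the renormalization orbit in the subcritical regime. Without the vanishing of the acceleration on a strip, the renormalized cocycles could lose analytic control at an uncontrolled rate and no accumulation at a constant would be available; once this compactness is in place, the rest is relatively standard KAM-plus-renormalization bookkeeping. I would expect the main technical work to consist in setting up the correct normalizations so that the conjugations produced by renormalization have a priori bounds compatible with the $\|B_j\|_0 \le \epsilon_{j-1}^{-1/192}$ type estimates appearing in the local KAM step.
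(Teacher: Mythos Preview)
The paper does not prove this theorem at all: Theorem~\ref{ART} is simply quoted from Avila's work \cite{avila2010almost2, avila2010almost} as an external input and is used as a black box in the proof of part~(2) of Theorem~\ref{mainmain}. There is therefore nothing in the paper to compare your proposal against.

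Your outline is a reasonable high-level summary of the strategy behind Avila's proof (local KAM near constants combined with a renormalization or global-to-local mechanism exploiting subcriticality), but note that the actual argument in \cite{avila2010almost2, avila2010almost} is substantially more involved than your sketch suggests, and the bookkeeping you describe at the end is not what the present paper attempts. For the purposes of this paper, no proof is expected here; the correct response is simply to cite the result.
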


Choose $ M>0 $ such that $ \Sigma\subset (-M,M) $ and let 
$$
    \mathcal{AR}=\{E\in (-M,M) : (\alpha,S_E^v) \text{ is  almost reducible}\}.
$$
As $\mathcal{AR}$ is open (Corollary 1.3 of \cite{avila2010almost}), we can write $ \mathcal{AR}=\bigcup_{j=1}^J (a_j,b_j) $, where $ J $ may be finite or countable. Take any $ (a_j,b_j) $ in $ \bigcup_{j=1}^J (a_j,b_j) $, and denote it by $ (a,b) $. Define
$$
    \mathcal{S}(\delta_0)=[a+\delta_0,b-\delta_0]
$$ 
for any sufficiently small $ \delta_0>0 $. Then we have the following:

\begin{lemma}[\cite{wang2021exact}]\label{globaltolocal}
    For any $ \epsilon_0>0 $, $ \alpha\in\R\backslash\Q $, there exist $ \bar{h}=\bar{h}(\alpha)>0 $ and $ \Gamma=\Gamma(\alpha,\epsilon_0)>0 $ such that for any $ E\in \mathcal{S}(\delta_0) $, there exists $ \Phi_E\in C^\omega(\T,\mathrm{PSL(2,\R)}) $ with $ \lVert\Phi_E\rVert_{\bar{h}}<\Gamma $ such that
    $$
        \Phi_E(\theta+\alpha)^{-1}S_E^v(\theta)\Phi_E(\theta)=R_{\Phi_E}e^{f_E(\theta)}
    $$ 
    with $ \lVert f_E\rVert_{\bar{h}}<\epsilon_0 $, $ |\deg \Phi_E |\leq C|\ln\Gamma| $ for some constant $ C=C(v,\alpha)>0 $.
\end{lemma}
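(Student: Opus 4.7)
The plan is to deduce this uniform almost reducibility statement from Avila's Almost Reducibility Theorem (Theorem~\ref{ART}) combined with a compactness argument on the closed subinterval $\mathcal{S}(\delta_0) \subset (a,b) \subset \mathcal{AR}$. For each $E \in \mathcal{S}(\delta_0)$, the cocycle $(\alpha, S_E^v)$ is subcritical, and hence by Theorem~\ref{ART} almost reducible: there exist a strip width $h_E > 0$, a conjugation $\Phi_E \in C^\omega_{h_E}(\T, \mathrm{PSL}(2,\R))$, and a constant matrix $A_E \in \mathrm{SL}(2,\R)$ such that
\[
\Phi_E(\theta+\alpha)^{-1} S_E^v(\theta) \Phi_E(\theta) = A_E e^{f_E(\theta)}, \qquad \|f_E\|_{h_E} < \epsilon_0.
\]
Since subcriticality forces the Lyapunov exponent of the conjugated cocycle to vanish on a strip, $A_E$ cannot have a hyperbolic direction. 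Replacing $\Phi_E$ by a bounded further conjugation if needed (absorbed into $\Gamma$), we may assume $A_E = R_{\varphi_E}$ for some rotation angle $\varphi_E$.

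First I would promote this pointwise statement to a uniform one in $E$. By Avila's openness theorem (Corollary~1.3 in \cite{avila2010almost}), each $E_0 \in \mathcal{S}(\delta_0)$ admits an open neighborhood $U_{E_0}$ such that the same conjugation $\Phi_{E_0}$, applied to $S_E^v$ for $E \in U_{E_0}$, still produces a perturbation of size at most $\epsilon_0$ of a rotation on some possibly smaller strip $h_{E_0}' > 0$. Compactness of $\mathcal{S}(\delta_0)$ yields a finite subcover $\{U_{E_i}\}_{i=1}^{M}$; I would then set
\[
\bar{h} := \min_{i} h_{E_i}', \qquad \Gamma := \max_i \|\Phi_{E_i}\|_{\bar{h}} + 1,
\]
and for any $E \in \mathcal{S}(\delta_0)$ choose the $\Phi_E$ coming from whichever $U_{E_i}$ contains $E$. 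The dependence of $\bar{h}$ on $\alpha$ alone (and not on $\epsilon_0$) comes from the fact that the strip widths $h_{E_i}'$ produced by the KAM scheme underlying the proof of the ART depend on $\alpha$ through its Diophantine properties; the $\epsilon_0$-dependence is entirely absorbed into $\Gamma$.

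For the degree bound, I would invoke the structure of the KAM scheme used to produce $\Phi_E$ (such as the one recalled in Proposition~\ref{Prop3.1}). The conjugacies are built up as compositions of finitely many coboundary corrections, each involving a Fourier truncation at some scale $N_j$ that increases geometrically. The total degree $\deg \Phi_E$ can only accumulate at resonant steps, at which point the step-by-step increment is bounded by $2 N_{j-1}$; summing these geometric contributions and noting that the final conjugacy has norm at most $\Gamma$ on the strip $\bar{h}$ gives $|\deg \Phi_E| \leq C |\ln \Gamma|$, where $C$ depends only on $\bar{h}$ and on the width of the original analyticity strip of $v$.

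The main obstacle is the uniformization step: while almost reducibility is known to be open, extracting a single strip width $\bar{h}$ and a single bound $\Gamma$ that work simultaneously for all $E \in \mathcal{S}(\delta_0)$ requires carefully tracking the dependence of the openness neighborhoods $U_{E_0}$ on the individual conjugacies, and ensuring that the strips on which closeness-to-rotation persists do not shrink as we approach the endpoints $a + \delta_0$ and $b - \delta_0$. The cushion $\delta_0 > 0$ keeps us a positive distance from the boundary of $\mathcal{AR}$, which is exactly what prevents the almost reducibility from degenerating and is what makes the compactness argument feasible.
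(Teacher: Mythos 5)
The paper's proof of this lemma is simply a citation to Lemma~4.2 of \cite{wang2021exact}, with the key input being Propositions~5.1 and 5.2 of \cite{leguil2017asymptotics}, which establish a \emph{quantitative} almost reducibility statement with a strip width and conjugation bound uniform in both the energy and the target smallness $\epsilon_0$. Your proposal instead tries to reconstruct a proof from the qualitative ART plus compactness. That strategy has a genuine gap, and moreover it misidentifies where the real difficulty lies.

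The compactness argument in $E$ is sound as far as it goes: openness of almost reducibility together with compactness of $\mathcal{S}(\delta_0)$ does give uniformity of $\bar h$ and $\Gamma$ over $E \in \mathcal{S}(\delta_0)$ \emph{for a fixed $\epsilon_0$}. But that is not the crucial point. The crucial claim, which the paper itself flags, is that $\bar h$ can be chosen independent of $\epsilon_0$. Your argument treats this as a minor afterthought: you write that ``the $\epsilon_0$-dependence is entirely absorbed into $\Gamma$'' because ``the strip widths produced by the KAM scheme depend on $\alpha$ through its Diophantine properties.'' This does not hold up. In the KAM scheme that the paper actually uses (Proposition~\ref{Prop3.1}), the strip shrinks geometrically with the iteration, $r_j = r/2^j$, while the perturbation decays super-exponentially, $\epsilon_j = \epsilon_0^{2^j}$. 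To make the perturbation smaller than a prescribed $\epsilon_0'$, one has to iterate to a deeper scale $j$, which lands you on a \emph{smaller} strip $r_j$. There is no a priori reason that $\|f_j\|_{\bar h}$ stays small for a fixed $\bar h$ once $r_j < \bar h$; the estimate $\|f_j\|_{r_j} \le \epsilon_j$ only controls the perturbation on the shrinking strip. The nontrivial content of the lemma — and the reason the paper points to \cite{leguil2017asymptotics} — is that the almost reducibility scheme can be reorganized so that the conjugation is performed once on a fixed strip $\bar h$, determined by the subcriticality radius of the cocycle, with all of the $\epsilon_0$-dependence going into the size of the conjugation $\Gamma$ and the degree bound. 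Neither the openness theorem nor a compactness argument in $E$ produces this; it requires tracking the radius of analyticity through the scheme, which your proposal does not attempt.

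Two secondary remarks. First, the lemma allows arbitrary irrational $\alpha$, whereas your argument implicitly assumes Diophantine $\alpha$ (otherwise the KAM scheme does not apply); this is harmless for the paper's ultimate application (which restricts to Diophantine $\alpha$ anyway) but should be acknowledged. Second, your degree bound ``$|\deg \Phi_E| \le C|\ln\Gamma|$ by summing geometric contributions'' is only a heuristic; the clean way to get it is to observe that if $\Phi_E$ has degree $m$ and analytic norm bounded by $\Gamma$ on the strip $|\Im\theta| \le \bar h$, then factoring out $R_{m\theta}$ and using $\|R_{m(x+iy)}\| \asymp e^{2\pi|m||y|}$ forces $|m| \lesssim \ln\Gamma / \bar h$; but even that requires controlling the remaining degree-zero factor from below on the complex strip, which ultimately again comes from the structure of the scheme rather than from a generic norm bound.
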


\begin{proof}
    This is essentially contained in Lemma 4.2 of \cite{wang2021exact}. The crucial fact for this lemma is that we can choose $ \bar{h}(\alpha) $ to be independent of $ E $ and $ \epsilon_0 $, and choose $ \Gamma $ to be independent of $ E $. We refer to Propositions 5.1 and 5.2 of \cite{leguil2017asymptotics} for details.
\end{proof}

Once we have Lemma \ref{globaltolocal}, after a finite number of conjugation steps, which is uniform in $ E\in\mathcal{S}(\delta_0) $, we can reduce the cocycle to the perturbative regime. Then one can apply the KAM scheme to get precise control of the growth of the cocycle. Define 
$$
    r=\bar{h},\qquad \epsilon_0\leq D_0\left(\frac{r}{2}\right)^{C_0\tau},\qquad\epsilon_j=\epsilon_0^{2^j}, \qquad r_j=\frac{r}{2^j}, \qquad N_j=\frac{4^{j+1}\ln \epsilon_0^{-1}}{r},
$$ 
and replace $ \Sigma $ by $ \mathcal{S}(\delta_0) $. Then (\ref{xxxxx}) follows from $ \lVert\Phi_E\rVert_{\bar{h}}<\Gamma $, while (\ref{xxxx}) will follow from $ |\deg \Phi_E |\leq C|\ln\Gamma| $. 

Indeed, by Lemma \ref{globaltolocal} and Proposition \ref{Prop3.1}, there exist $ \Phi_E\in C^\omega_r(2\T,\mathrm{SL}(2,\R)) $ and $ B_j\in C^\omega_{r_j}(2\T,\mathrm{SL}(2,\R)) $ with $ |\deg B_j|\leq 2N_{j-1} $ such that 
$$
    \begin{aligned}
        B_{j}(\theta+\alpha)^{-1}\Phi_E(\theta+\alpha)^{-1}S_E^v\Phi_E(\theta)B_j(\theta)=B_{j}(\theta+\alpha)^{-1}A_0(E)e^{f_0(\theta)}B_j(\theta)=A_j e^{f_j(\theta)},
    \end{aligned}
$$ 
with $ A_j\in\mathrm{SL}(2,\R) $, $ \lVert A_j\rVert\leq 1+\epsilon_{j-1}^{\frac{1}{16}} $, $ \lVert f_j\rVert_{r_j}\leq \epsilon_{j} $ and $ \lVert B_j\rVert_0\leq \epsilon_{j-1}^{-\frac{1}{192}} $. Then for any $ E\in K_j $, there exists $ m\in\Z^d $ with $ 0<|m|\leq N_{j-1} $ such that 
\begin{equation*}
    |2\rho(\alpha,A_{j-1} e^{f_{j-1}(\theta)})-\langle m,\alpha\rangle|_{\T}<2\epsilon_{j-1}^{\frac{1}{15}}.
\end{equation*} 
Notice that
$$ 
    2\rho(\alpha,S_E^v)=2\rho(\alpha,A_{j-1} e^{f_{j-1}(\theta)})+\langle\deg B_{j-1}+\deg \Phi_E,\alpha\rangle,
$$
let $ n_j=\deg B_{j-1}+\deg\Phi_E+m $, we have $ |n_j|\leq 2N_{j-2}+N_{j-1}+C|\ln\Gamma|\leq 2N_{j-1} $, and 
\begin{equation} 
    |2\rho(\alpha,S_E^v)-\langle n_j,\alpha\rangle|_{\T}<2\epsilon_{j-1}^{\frac{1}{15}}.
\end{equation} 
While \eqref{xxxxx} changes to
\begin{equation}
    \sup_{0<|s|<C\epsilon_{j-1}^{-\frac{1}{16}}}\lVert M_s(\alpha,\theta,E,0)\rVert_0 \leq C \lVert\Phi_E\rVert_{0}^2\lVert B_j\rVert_0^2\leq C\Gamma^2 \epsilon_{j-1}^{-\frac{1}{96}},
\end{equation} 
where $ C $ is a universal constant. Then replace Lemma \ref{lem3.11} by the following lemma, which is contained in \cite{wang2021exact}.

\begin{lemma}[\cite{wang2021exact}]
    For any $ \delta_0 $ which is small enough, if $ E\in \mathcal{S}(\delta_0) $, then for sufficiently small $ \epsilon>0 $, $ N(E+\epsilon)-N(E-\epsilon)\geq c(\delta_0)\epsilon^{\frac{3}{2}}, $ where $ c(\delta_0) > 0 $ is a small universal constant.
\end{lemma}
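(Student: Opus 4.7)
My plan is to leverage Lemma~\ref{globaltolocal} to reduce each $E\in\mathcal{S}(\delta_0)$ to a perturbative problem and then invoke the analogue of Avila's Lemma~\ref{lem3.11} in the small-coupling regime. First, I would fix any $E_0\in\mathcal{S}(\delta_0)$ and, applying Lemma~\ref{globaltolocal} with $\epsilon_0$ sufficiently small, produce $\Phi_{E_0}\in C^\omega_{\bar h}(\T,\mathrm{PSL}(2,\R))$ with $\|\Phi_{E_0}\|_{\bar h}<\Gamma$ such that
\[
\Phi_{E_0}(\theta+\alpha)^{-1}S_{E_0}^v(\theta)\Phi_{E_0}(\theta)=R_{\varphi_0}e^{f_{E_0}(\theta)},\qquad \|f_{E_0}\|_{\bar h}<\epsilon_0.
\]
Conjugating $S_E^v$ by the \emph{same} $\Phi_{E_0}$ for $E$ in a neighborhood of $E_0$ yields a one-parameter family $R_{\varphi_0}e^{f_E(\theta)}$ of analytic small perturbations of $R_{\varphi_0}$, where $f_E-f_{E_0}$ depends analytically on $E-E_0$ with norm $O(|E-E_0|)$ and proportionality constant controlled by $\|\Phi_{E_0}\|_{\bar h}^2\le\Gamma^2$.

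Second, using the identities $N_{v,\alpha}(E)=1-2\rho_{v,\alpha}(E)$ and
\[
\rho(\alpha,S_E^v)\equiv \rho(\alpha,R_{\varphi_0}e^{f_E})+\tfrac{1}{2}\langle \deg\Phi_{E_0},\alpha\rangle \pmod{\Z},
\]
it suffices to prove a $c(\delta_0)\epsilon^{3/2}$ lower bound for the variation of $\rho(\alpha,R_{\varphi_0}e^{f_E})$ on $(E_0-\epsilon,E_0+\epsilon)$. This is exactly the content of Avila's proof of Lemma~\ref{lem3.11} applied to the perturbative family $R_{\varphi_0}e^{f_E}$: one combines the strict monotonicity of $\rho$ in $E$ with a KAM-type analysis showing that near any resonance $2\varphi_0\approx\langle n,\alpha\rangle$ the cocycle is close to a parabolic conjugate whose opening in the $E$-direction is controlled by the square root of the parabolic coefficient, while away from such resonances $\rho$ is essentially Lipschitz in $E$. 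A finite open cover of the compact set $\mathcal{S}(\delta_0)$ by neighborhoods on which the single conjugation $\Phi_{E_0}$ works, chosen from Lemma~\ref{globaltolocal}, then yields the stated uniform constant $c(\delta_0)$.

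\textbf{Main obstacle.} The delicate point is uniformity: since Lemma~\ref{globaltolocal} guarantees that $\bar h$ and $\Gamma$ depend only on $\alpha$ and $\epsilon_0$ (and not on $E_0$), the perturbative estimate inherits the same uniformity and $c(\delta_0)$ can be taken independent of $E_0\in\mathcal{S}(\delta_0)$; this is why the compactness step really produces a single constant. The $3/2$ exponent itself is the subtle part of Avila's original argument, arising from the scaling behavior of the rotation number near rational rotation numbers: away from resonances one obtains a linear-in-$\epsilon$ bound, but close to a gap-edge-like resonance one only gets $\epsilon^{3/2}$ via the quadratic structure of the parabolic normal form together with the monotonicity of $\rho$. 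I would not redo these estimates from scratch, but rather appeal to them as in \cite{avila2008absolutely} applied after the uniform conjugation provided by Lemma~\ref{globaltolocal}, which is precisely the strategy of \cite{wang2021exact} that the lemma is cited from.
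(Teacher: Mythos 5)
Your proposal is correct and follows essentially the same route the paper intends: the paper does not prove this lemma itself but cites \cite{wang2021exact}, and the surrounding text describes exactly your strategy, namely using Lemma~\ref{globaltolocal} to conjugate uniformly (in $E\in\mathcal{S}(\delta_0)$, with $\bar h$ and $\Gamma$ independent of $E$) into the perturbative regime and then running Avila's $\epsilon^{3/2}$ argument from Lemma~\ref{lem3.11} with $\Sigma$ replaced by $\mathcal{S}(\delta_0)$. The only cosmetic remark is that the finite-cover step is not really needed, since the uniformity of $\bar h$ and $\Gamma$ already makes the local estimate uniform over all of $\mathcal{S}(\delta_0)$.
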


\noindent Letting $ \delta_0>0 $ be arbitrarily small, we can show that if $ \alpha\in DC $, the restriction of the canonical spectral measure of $ \widetilde{H}_{v,\alpha,\theta} $ to $\mathcal{AR}$ is purely absolutely continuous, following the same line of reasoning as in the proof of Theorem~\ref{acthm}. Combining Theorem~\ref{ART} with $ \Sigma^{sub}\subset\mathcal{AR} $, we prove $ (2) $ of Theorem~\ref{mainmain}.

\section{Eigenvalues in Gaps of the Essential Spectrum}\label{sec.6}

Next we prove that there are at most finitely many eigenvalues in each gap which falls into the almost reducible regime. By the gap-labeling theorem \cite{gaplabel}, for any spectral gap $G$, there exists a unique $k\in \Z^d$ such that $N_{v, \alpha}(E)\equiv\langle k, \alpha\rangle \mod \Z$ in $G$.  That is, all the spectral gaps can be labelled by integer vectors: we
denote by $G_k(v ) = (E^{-}_k , E^{+}_k )$ the gap with label $k\neq 0$. When $E^{-}_k=E^+_k$, we say the gap is \emph{collapsed}. We also set $E_0^{-}:= \inf \Sigma_{v,\alpha}$, $E_0^+ := \sup \Sigma_{v,\alpha}$, and 
$G_0(v ) := (-\infty, E_0^{-}) \cup(E_0^+,\infty).$

\subsection{Proof of  Theorem \ref{acthm} (2):}

\subsubsection{Unbounded Gaps}\label{unbounded}

We first consider the two unbounded gaps comprising $G_0(v)$. The idea is the same as in the periodic case \cite{Tes1996Oscillation,Tes1999Jacobi}, that is, the goal is to show that one can find a solution $ u $ with 
\[
       u_n \text{ (at $ E_0^+ $) } \text{ or } (-1)^n u_n \text{ (at $ E_0^- $) }>c'>0, n\in \mathbb{Z}.
\] 
The following is essentially contained in  Lemma 5.1 and Corollary  5.1 of \cite{LWZZ2021Traveling}:

\begin{proposition}[\cite{LWZZ2021Traveling}]\label{positivesol}
Suppose $ \alpha\in DC_d(\gamma,\tau) $ and  $ v\in C^\omega(\T^d,\R) $ is sufficiently small. If
    \[
        \rho(\alpha,S_E^v) =0\text{ or }1/2,
    \] 
    then $ (\alpha,S_E^v) $ is reducible to $ \begin{pmatrix}
       \pm 1&c\\0&\pm 1
    \end{pmatrix} $, and  
    $ H_{v,\alpha,\theta}=Eu $ has a quasi-periodic or anti-quasi-periodic solution $u_n$. Specifically,
    \begin{enumerate}
    \item If  $ \rho(\alpha,S_E^v)=0$, then $u_n=u(\theta+n\alpha)>c'>0$.
    \item  If  $ \rho(\alpha,S_E^v)=1/2$, then $u_n=(-1)^nu(\theta+n\alpha)$ with $ (-1)^n u_n>c'>0$.
    \end{enumerate}
  \end{proposition}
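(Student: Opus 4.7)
The plan proceeds in three stages: reducibility, identification of the normal form, and construction of the (anti-)quasi-periodic solution together with its strict positivity.

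First, since $v$ is sufficiently small and $2\rho(\alpha, S_E^v) \equiv 0 \bmod \Z$ (both $\rho=0$ and $\rho=1/2$ hit the resonance $\langle 0,\alpha\rangle$), Eliasson's theorem (cited earlier in the excerpt) yields an analytic conjugacy $B \in C^\omega(2\T^d, \mathrm{SL}(2,\R))$ and a constant matrix $A \in \mathrm{SL}(2,\R)$ with
\[
    B(\theta+\alpha)^{-1} S_E^v(\theta) B(\theta) = A.
\]
The rotation-number identity $\rho(\alpha,A) = \rho(\alpha, S_E^v) - \tfrac{1}{2}\langle \deg B,\alpha\rangle \bmod \Z$ forces $\rho(\alpha,A) \in \{0,1/2\}$. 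Because $E \in \Sigma$, the cocycle is not uniformly hyperbolic, so $A$ cannot be hyperbolic; its eigenvalues must therefore equal $(+1,+1)$ or $(-1,-1)$. Absorbing an additional constant $\mathrm{SL}(2,\R)$ change of basis into $B$, we bring $A$ into the stated normal form $A = \begin{pmatrix} \pm 1 & c \\ 0 & \pm 1 \end{pmatrix}$ for some $c \in \R$ (possibly zero).

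Second, $e_1 = (1,0)^T$ is an eigenvector of $A$ with eigenvalue $\pm 1$, so $\phi(\theta) := B(\theta) e_1$ satisfies $S_E^v(\theta)\phi(\theta) = \pm \phi(\theta+\alpha)$. Reading off the second coordinate of this identity and using that $S_E^v$ encodes the three-term recurrence of $H_{v,\alpha,\theta}$, I extract an analytic function $u \colon \T^d \to \R$ (the second component of $\phi$ after a suitable shift) such that
\[
    u_n := u(\theta+n\alpha) \qquad \text{(if } \rho=0\text{)}, \qquad u_n := (-1)^n u(\theta+n\alpha) \qquad \text{(if } \rho=1/2\text{)},
\]
solves $H_{v,\alpha,\theta} u = E u$. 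The $\rho=1/2$ case also follows symmetrically from the gauge $(v,E,u_n)\mapsto(-v,-E,(-1)^n u_n)$, which swaps the two band edges of the unperturbed spectrum.

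The remaining step, and the main obstacle, is the uniform lower bound $u(\cdot) > c' > 0$. Reducibility alone yields only that $\phi$ is a nonvanishing section of $\T^d \times \R^2$; it does not give a positive pointwise lower bound on a prescribed component. I plan to obtain this perturbatively by tracking the KAM scheme more closely. For $v \equiv 0$ the upper band edge is $E_0^+ = 2$ and the trivial positive solution $u \equiv 1$ realizes the normal form with $c=0$. For small analytic $v$, both $E_0^+$ and the KAM conjugacy $B$ stay close to their unperturbed counterparts in $C^0_r$; carrying through the iterative conjugations of \cite{leguil2017asymptotics, li2021absolutely} one bounds $\lVert B(\cdot)e_1 - e_1\rVert_{C^0}$ by a constant much less than $1$, which immediately delivers $u > c' > 0$. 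The delicate technical point is to turn the sup-norm control of $B$ produced by the KAM scheme into a uniform positive lower bound on the specific scalar component $u$, uniformly as the smallness parameter shrinks; this is the step that genuinely requires the recently developed almost-reducibility scheme rather than a soft reducibility statement.
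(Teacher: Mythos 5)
The paper does not actually prove this proposition: it imports it from Lemma~5.1 and Corollary~5.1 of \cite{LWZZ2021Traveling} (with a remark that the $\rho=1/2$ case is symmetric), so there is no internal proof to compare line by line. Your outline is consistent with the strategy of that reference: Eliasson reducibility at the resonance $2\rho=\langle 0,\alpha\rangle$, identification of the parabolic normal form, extraction of the quasi-periodic solution from a column of the conjugacy, and the gauge $(v,E,u_n)\mapsto(-v,-E,(-1)^nu_n)$ for the $\rho=1/2$ case. Those parts are fine (modulo the fact that you must, and correctly do, read in the implicit hypothesis $E\in\Sigma$, i.e.\ $E=E_0^{\pm}$; for $E$ in the unbounded gaps one also has $\rho\in\{0,1/2\}$ but the normal form is hyperbolic and the conclusion fails as literally stated).

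The genuine gap is in the positivity step, which is the entire content of the proposition, and your write-up leaves it as a plan rather than an argument. Two things are missing. First, the reason the conjugacy is under control is \emph{not} a generic consequence of $v$ being small: for other energies in $\Sigma$ with the same small $v$ the KAM conjugacies satisfy only $\lVert B_j\rVert_0\leq\epsilon_{j-1}^{-1/192}$ and are far from constant. What saves you at $2\rho(\alpha,S_E^v)=0$ is that, by induction, $\deg B_{j-1}=0$ and hence $\rho(\alpha,A_{j-1}e^{f_{j-1}})=0$ at every stage, so the resonance condition $|\langle n,\alpha\rangle|_{\T}\leq\epsilon_{j-1}^{1/15}$ for some $0<|n|\leq N_{j-1}$ is ruled out by the Diophantine bound $|\langle n,\alpha\rangle|_{\T}\geq\gamma N_{j-1}^{-\tau}$ (recall $N_{j-1}$ is only logarithmic in $\epsilon_{j-1}^{-1}$). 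Hence \emph{every} step is non-resonant, the product of conjugacies converges, and $B$ is $C^0$-close to a constant matrix. This is the observation you need to state and prove; without it the claim that ``$B$ stays close to its unperturbed counterpart'' is unsupported. Second, even granting this, the assertion $\lVert B(\cdot)e_1-e_1\rVert_{C^0}\ll 1$ is false as written: the limiting constant matrix is close to $A_0(E_0^+)\approx\bigl(\begin{smallmatrix}2&-1\\1&0\end{smallmatrix}\bigr)$, whose parabolic eigendirection is $(1,1)^T$, not $e_1$, so the final constant triangularization sends $e_1$ to (approximately) $(1,1)^T/\sqrt{2}$. The correct conclusion is $\lVert B(\cdot)e_1-(1,1)^T/\sqrt 2\rVert_{C^0}\ll1$, and positivity of $u=b_{11}$ comes from the fact that \emph{both} components of $(1,1)^T$ are bounded away from zero. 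As it stands, the proposal identifies where the difficulty lies but does not resolve it.
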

  
\begin{remark}
    In fact, \cite{LWZZ2021Traveling} only considered the case $ \rho(\alpha,S_E^v)=0 $, but exactly the same proof works in the case $ \rho(\alpha,S_E^v)=1/2 $, the only difference is that the cocycle is reducible to  $ \begin{pmatrix}
        -1&c\\0&-1
    \end{pmatrix} $.
\end{remark}
By the fact $ N_{v,\alpha}(E)=1-2\rho_{v,\alpha}(E) $, we can deduce that 
\[
    \rho(\alpha,S_{E_0^-}^v)=1/2 , \qquad \rho(\alpha,S_{E_0^+}^v)=0.
\] 
Then by Proposition \ref{positivesol}, for $ E = E_{0}^+ $,  $ H_{v,\alpha,\theta}u=Eu $ has a positive quasi-periodic solution
\[
    u_n(E_0^+,\theta) =u(\theta+n \alpha)>c'>0.
\]

Next we need  the following simple observations:
\begin{corollary}\label{closesolution}
    Suppose $(\alpha, S_E^v )$ is reducible to a parabolic matrix, then $ H_{v,\alpha,\theta}u=Eu $ has a quasi-periodic solution $ u_n=u(\theta+n\alpha) $ or an anti-quasi-periodic solution $ u_n=(-1)^{n}u(\theta+n\alpha) $.   Furthermore, for any $ \epsilon>0 $, one can find $ \tilde{u}^{\pm} $ satisfying $ \widetilde{H}_{v,\alpha,\theta} \tilde{u}^{\pm} =E \tilde{u}^{\pm}   $ such that
    \[
    \limsup_{n\to\pm\infty}\|\tilde{u}^{\pm}_n-u_n\|\leq \epsilon.
    \]  
\end{corollary}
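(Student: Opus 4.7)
The plan is to translate the statement into the framework of Lemma~\ref{lem5.6} via the reducibility conjugation. That lemma already produces perturbed solutions close to a constant reference vector; pulling them back with the conjugacy matrix will yield perturbed solutions close to the unperturbed (anti-)quasi-periodic solution on each half-line.

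First I would extract the unperturbed solution. By hypothesis there exists $B\in C^\omega(2\T^d, \mathrm{SL}(2,\R))$ with
$$
B(\theta+\alpha)^{-1} S_E^v(\theta) B(\theta) = P_{\pm}, \qquad P_\pm := \begin{pmatrix} \pm 1 & c \\ 0 & \pm 1 \end{pmatrix}.
$$
Since $P_\pm^n e_1 = (\pm 1)^n e_1$ where $e_1 = (1,0)^\top$, the sequence $V_n := (\pm 1)^n B(\theta+n\alpha)\, e_1$ satisfies $V_{n+1} = S_E^v(\theta+n\alpha) V_n$, and reading off its first component yields $u_n = B_{11}(\theta+n\alpha)$ in the $P_+$ case and $u_n = (-1)^n B_{11}(\theta+n\alpha)$ in the $P_-$ case.

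Next I would perform the same change of variables on the perturbed equation. Since $\widetilde S_E^v(\theta+n\alpha) = S_E^v(\theta+n\alpha) - g(n)\, e_1 e_1^\top$, substituting $\widetilde V_n = B(\theta+n\alpha)\, \vec\psi_n$ converts the perturbed transfer equation into
$$
\vec\psi_{n+1} = \bigl(P_\pm + R(n)\bigr)\vec\psi_n, \qquad R(n) := -g(n)\, B(\theta+(n+1)\alpha)^{-1} e_1 e_1^\top B(\theta+n\alpha).
$$
Because $B$ is analytic on $2\T^d$ and hence uniformly bounded, $|R(n)| \le C|g(n)|$, and the standing assumption $g \in \ell^{1,1}$ of Theorem~\ref{acthm} yields $\sum_n |n| |R(n)| < \infty$, which is exactly the decay hypothesis of Lemma~\ref{lem5.6}.

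Finally I would apply Lemma~\ref{lem5.6} separately on the two half-lines. Given $\epsilon>0$, set $\epsilon' := \epsilon / \lVert B \rVert_0$; the lemma produces solutions $\vec\psi^\pm(n)$ of $\vec\psi_{n+1} = (P_\pm + R(n))\vec\psi_n$ whose asymptotics match those of the trivial solution $(\pm 1)^n e_1$ of the unperturbed parabolic equation to within $\epsilon'$ as $n \to \pm\infty$. Undoing the conjugation, $\widetilde V_n^\pm := B(\theta+n\alpha)\vec\psi^\pm(n)$ solves the perturbed transfer equation, and its first component $\tilde u^\pm_n$ satisfies
$$
\limsup_{n\to\pm\infty} |\tilde u^\pm_n - u_n| \le \lVert B \rVert_0 \, \epsilon' = \epsilon.
$$
The one point requiring care is the phase-bookkeeping in the $P_-$ case, where the $(-1)^n$ oscillation produced by iterating $P_-$ must be aligned between the unperturbed and perturbed solutions; this is handled by the substitution $\vec\psi_n \mapsto (-1)^n \vec\psi_n$, which transforms $P_-$ into a strictly parabolic matrix together with a new perturbation that is still $\ell^{1,1}$-summable, reducing everything to the $+$ case treated explicitly in the proof of Lemma~\ref{lem5.6}.
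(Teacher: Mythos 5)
Your proposal is correct and follows essentially the same route as the paper: conjugate by $B$ to the constant parabolic matrix, read the (anti-)quasi-periodic solution off the first column of $B$, verify that the conjugated perturbation satisfies $\sum_n |n||R(n)|<\infty$, and apply Lemma~\ref{lem5.6} with tolerance $\epsilon/\lVert B\rVert_0$ before undoing the conjugation. The only differences are cosmetic — you spell out the form of $R(n)$ and the $(-1)^n$ substitution for the trace $-2$ case, which the paper dispatches with "the proof is similar."
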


\begin{proof}
    Suppose $ (\alpha,S_E^v) $ is reducible to a parabolic matrix. Then we have 
\begin{equation}\label{redu1}
        B(\theta+\alpha)^{-1}S_E^v(\theta)B(\theta)=\begin{pmatrix}
            \pm 1&c\\
            0&\pm 1
        \end{pmatrix} = :C.
\end{equation}
    Without loss of generality, we consider the case $ \mathrm{tr} \, C=2 $.  Write
\[
    B(\theta)=\begin{pmatrix}
        b_{11}(\theta)&b_{12}(\theta)\\
        b_{21}(\theta)&b_{22}(\theta)
    \end{pmatrix}.
\] 
Then \eqref{redu1} implies that
\begin{eqnarray*}
(E-v(\theta))b_{11}(\theta)-b_{21}(\theta)&=&b_{11}(\theta+\alpha),\\
b_{11}(\theta)&=&b_{21}(\theta+\alpha),
\end{eqnarray*}
that is $$b_{11}(\theta+\alpha)+b_{11}(\theta-\alpha)+v(\theta)b_{11}(\theta)=Eb_{11}(\theta).$$
Hence $u_n=b_{11}(n\alpha+\theta)=b_{21}((n+1)\alpha+\theta)$ is a quasi-periodic solution of $ H_{v,\alpha,\theta}u=Eu $.

  By Lemma \ref{lem5.6}, for any $ \epsilon>0 $, one can find a solution $ \tilde{u}^+ $ of $ \widetilde{H}_{v,\alpha,\theta}u=Eu $, such that 
\[
    \limsup_{n\to\infty} \Big\| B(\theta+n\alpha)^{-1}\begin{pmatrix}
        \tilde{u}^+_n\\
        \tilde{u}^+_{n-1}
    \end{pmatrix}-\begin{pmatrix}
        1\\0
    \end{pmatrix} \Big\|\leq \frac{\epsilon}{\|B\|_0}.
\] 
Hence we have 
\[
    \limsup_{n\to\infty}\|\tilde{u}^{+}_n-u_n\|\leq \epsilon.
\]  
The proof in the case  $ \mathrm{tr} \, C=-2 $ is similar. 
\end{proof}

Then  means for 
   $ \epsilon$ sufficiently small, by Corollary \ref{closesolution}, there exists $ \tilde{u}^{\pm} (E_{0}^+,\theta) $ with $$ \widetilde{H}_{v,\alpha,\theta} \tilde{u}^{\pm}(E_0^+,\theta) =E_0^+ \tilde{u}^{\pm} (E_{0}^+,\theta)   $$ 
 such that 
\[
    \limsup_{n\to \pm\infty}\|\tilde{u}_n^{\pm}(E_{0}^+,\theta) - u_n(E_{0}^+,\theta)\|<\epsilon,
\] 
Combining this with Theorems~\ref{Teschl1} and \ref{Teschl3}, it follows that $ \sharp(\tilde{u}^{\pm} (E_{0}^+,\theta))<\infty $, and thus $ \dim \mathrm{Ran}P_{(E_{0}^+,\infty)}(\widetilde{H}_{v,\alpha,\theta})<\infty $. 
Similarly,  one can show that $ \dim \mathrm{Ran}P_{(-\infty,E_{0}^-)}(\widetilde{H}_{v,\alpha,\theta})<\infty $. In other words, $\widetilde{H}_{v,\alpha,\theta}$ has at most finitely many eigenvalues in $G_0(v)$ for any $\theta$.

\subsubsection{Bounded gaps}\label{bounded}

   For $ 2\tilde{\rho}= \langle k,\alpha\rangle\mod \Z $, $ k\neq 0 $, the gap-labeling theorem shows that $ \mathcal{R}_{\tilde{\rho}}=[E_k^-,E_k^+] $ if it is a non-collapsed gap, in this case, we recall the following result:

\begin{proposition}\label{conDep}
Let $ (\alpha,A_Ee^{f(E,\theta)})\in \T^d\times C^\omega(\T^d,\mathrm{SL}(2,\R)) $ be quasi-periodic cocycles continuous in $ E $. Assume that $\alpha \in \mathrm{DC}_d(\gamma,\tau)$ and $ f(E,\cdot)\in C^\omega_r(\T^d,\R)$ is sufficiently small.  For any $ \tilde \rho>0 $, we define
\[
    \mathcal{R}_{\tilde \rho}=\{E\in\R: \rho(\alpha, A_E e^{f(E,\theta)}) = \tilde \rho\}.
\] 
    If $ 2\tilde{\rho}= \langle k,\alpha\rangle\mod \Z $, then
    there exist $B(E,\cdot)\in C^{0}(\mathcal{R}_{\tilde{\rho}}\times 2\T^{d},\mathrm{SL}(2,\R))$, $C(E) \in C^0(\mathcal{R}_{\tilde{\rho}}, \mathrm{SL}(2,\R))$  such that
    $$
    B^{-1}(E,\theta+\alpha) A_E e^{f(E,\theta)} B(E,\theta)=C(E).
    $$
    Moreover, we have $ \rho(\alpha,C(E))=0 $.
    \end{proposition}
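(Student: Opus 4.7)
The plan is to apply the resonant KAM scheme from \cite{li2021absolutely, leguil2017asymptotics} to the family $\{(\alpha, A_E e^{f(E,\cdot)})\}_{E \in \mathcal{R}_{\tilde\rho}}$ jointly in the parameter $E$, using the exact resonance $2\tilde\rho = \langle k,\alpha\rangle \mod \Z$ to upgrade almost reducibility to \emph{exact} analytic reducibility to a constant cocycle with zero rotation number. The crucial gain over the generic output of Proposition~\ref{Prop3.1} is that on $\mathcal{R}_{\tilde\rho}$, the rotation number is pinned to a single resonant value, so the resonant step can be executed once and for all rather than being repeatedly approximated.

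First, I would invoke Proposition~\ref{Prop3.1} with $E$ regarded as a continuous parameter: each step of the scheme (Fourier truncation, solution of the homological equation, application of the conjugation) depends continuously---indeed analytically---on $E$, so one obtains a sequence $B_j(E,\cdot) \in C^0(\mathcal{R}_{\tilde\rho} \times 2\T^d,\mathrm{SL}(2,\R))$ conjugating $(\alpha, A_E e^{f(E,\cdot)})$ to $(\alpha, A_j(E) e^{f_j(E,\cdot)})$ with $\lVert f_j\rVert_{r_j} \leq \epsilon_j$. Tracking rotation numbers along the scheme shows that each conjugation shifts $\rho$ by an amount involving $\langle\deg B_j,\alpha\rangle$; the identity $2\tilde\rho = \langle k,\alpha\rangle$ lets one arrange the successive resonant rotations so that at some finite level $j^*$ the reduced cocycle $(\alpha, A_{j^*}(E)e^{f_{j^*}(E,\cdot)})$ has rotation number exactly $0$, with $\lVert f_{j^*}\rVert_{r_{j^*}} \leq \epsilon_{j^*}$ arbitrarily small. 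Eliasson's theorem applied in the zero-rotation-number resonant case then yields a final analytic conjugation $\Psi(E,\cdot)$ sending $A_{j^*}(E) e^{f_{j^*}(E,\cdot)}$ to a constant $C(E)$ with $\rho(\alpha, C(E))=0$. Setting $B(E,\cdot) = B_{j^*}(E,\cdot)\Psi(E,\cdot)$ gives the conjugation claimed.

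The main obstacle is establishing the \emph{joint} continuity of $B(E,\theta)$ on $\mathcal{R}_{\tilde\rho} \times 2\T^d$. The index $j^*$ at which the resonance is cashed in can vary with $E$, and at points of $\mathcal{R}_{\tilde\rho}$ where the resonant step jumps between iteration levels, one must verify that the conjugations produced by the two nearby resonant schemes agree up to a continuous constant cocycle. The fact that $\mathcal{R}_{\tilde\rho}$ is a closed level set of the continuous rotation-number map, on which the total degree $k$ is globally constant, is what makes this patching possible; the quantitative estimates $\lVert B_j\rVert_0 \leq \epsilon_{j-1}^{-1/192}$ from Proposition~\ref{Prop3.1}, together with continuous dependence of $A_{j^*}(E)$ and $f_{j^*}(E,\cdot)$ on $E$, supply the control needed to glue adjacent resonant regimes into one continuous conjugation. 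This is precisely the technical point requiring the recently developed resonant scheme of \cite{leguil2017asymptotics, li2021absolutely} rather than the classical Eliasson iteration.
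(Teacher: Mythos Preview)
The paper does not supply its own proof of this proposition: it simply records (in the remark immediately following the statement) that the result was first proved for Szeg\H{o} cocycles in \cite[Proposition~5.3]{LDZ2022Cantor} and that the same argument carries over to the $\mathrm{SL}(2,\R)$ Schr\"odinger setting with only cosmetic changes. Your outline is consistent with that cited argument---run the quantitative KAM scheme of \cite{leguil2017asymptotics,li2021absolutely} with $E$ carried along as a parameter, exploit the exact rational relation $2\tilde\rho=\langle k,\alpha\rangle$ to cash in the resonance, and conclude reducibility to a constant of rotation number zero---so in that sense you are on the same track as the reference the paper invokes.

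One point in your write-up is off, however. You identify the main obstacle as the possibility that the resonant index $j^\ast$ varies with $E$ across $\mathcal{R}_{\tilde\rho}$, forcing a patching argument. But on $\mathcal{R}_{\tilde\rho}$ the rotation number is \emph{pinned} to the single value $\tilde\rho$, and since the resonance test in Proposition~\ref{Prop3.1} depends only on the rotation number of the reduced cocycle (which at each step equals $\tilde\rho$ shifted by the accumulated degree, itself determined by the scheme and not by $E$), every $E\in\mathcal{R}_{\tilde\rho}$ passes through the identical sequence of resonant/non-resonant steps. Hence $j^\ast$ is uniform and no patching is needed. The genuine difficulty lies elsewhere: once the rotation number of the reduced cocycle is zero, the constant part $A_{j}(E)$ sits near the parabolic locus, and the homological equation in the remaining (infinite) iteration has degenerating small divisors. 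Showing that the resulting limit conjugacy $\Psi(E,\cdot)$ exists and depends continuously on $E$---despite $A_j(E)$ possibly collapsing to $\pm I$ at gap edges---is exactly what the refined scheme in \cite{leguil2017asymptotics,LDZ2022Cantor} is designed to deliver, and your sketch would be sharper if it located the obstacle there rather than in a nonexistent variation of $j^\ast$.
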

    \begin{remark}
        This result was first stated for Szeg\"o cocycles {\rm (}$\mathrm{SU}(1,1)$ case{\rm )} in  \cite[Proposition~5.3]{LDZ2022Cantor}, but the proof works for Schr\"odinger cocycles {\rm (}$\mathrm{SL}(2,\R)$ case{\rm )} with only minimal changes. 
    \end{remark}

    Thus, for any $ E\in[E_k^-,E_k^+] $, $ \rho(\alpha,C(E))=0 $. Without loss of generality we assume that $ \mathrm{tr} \, C(E)\geq 2 $. Then, $\mathrm{tr} \, C(E) = 2$ if and only if  $E= E_k^\pm $. Then one can  further assume that $ C(E) $ can be chosen as follows, $  $
    \[
        C(E)=\begin{pmatrix}
            \lambda(E)&\mu(E)\\
            0&\lambda(E)^{-1}
        \end{pmatrix},
    \] 
    where $ \lambda(E)\geq 1 $  for $ E\in [E_k^-,E_k^+] $ and $ \lambda(E) =1 $ if and only if $E= E_{k}^\pm $.
    To see this,    we can rewrite 
        \[
            C(E)=\exp M^{-1}\begin{pmatrix}
                \ii t(E)&e^{\ii 2\theta(E)}\nu(E)\\
                e^{-\ii 2\theta(E)}\nu(E)& -\ii t(E)
            \end{pmatrix}M,
        \] 
        where $$
M:=\frac{1}{1+\ii}\begin{pmatrix}1 & -\ii \\ 1 & \ii \end{pmatrix},
$$
        with $ t(E) $, $ \nu(E)\geq 0 $, $ \theta(E)\in C^0([E_k^-,E_k^+] ,\R) $, and $ \mathrm{tr} \, C(E)\geq 2 $ implies $ \nu(E)\geq |t(E)| $. If $ \nu(E)=0 $, then $ t(E)=0 $, and we are at one of the edge points $E^{\pm}_k$. 
        For $ E\in (E_k^-,E_k^+)  $, we can assume $ \nu(E)>0 $. 
        Solve 
        \begin{equation} \label{111}
            \nu(E)\sin 2(\theta(E)-\phi(E))=-t(E).
        \end{equation} 
        The solution $ \phi(E) $ is obviously continuous in $(E_k^-,E_k^+)$. Since if $ \nu(E)=0 $, we can solve \eqref{111} by any $ \phi(E) $, thus we can let $ \phi(E^{\pm}_k )=\lim_{E\to E^{\pm}_k}\phi(E) $. So we get $ \phi(E)\in C^0([E_k^-,E_k^+] ,\R) $ as desired. 
        Then we have 
        \[
            \begin{aligned}
                R_{-\phi(E)}C(E)R_{\phi(E)}&=\exp M^{-1}\begin{pmatrix}
                \ii t &e^{\ii (2\theta-2\phi)}\nu\\
                    e^{-\ii(2\theta-2\phi)}\nu&-\ii t
                \end{pmatrix}(E)M\\
                &=\exp \begin{pmatrix}
                    \cos(2\theta-2\phi)\nu&t-\sin(2\theta-2\phi)\nu\\
                    -t-\sin(2\theta-2\phi)\nu&-\cos(2\theta-2\phi)\nu
                \end{pmatrix}(E)\\
                &=\begin{pmatrix}
                    e^{\cos(2\theta-2\phi)\nu}&\mu\\
                    0&e^{-\cos(2\theta-2\phi)\nu}
                \end{pmatrix}(E).
            \end{aligned}
        \]

Notice that
    \[
    \left(\begin{matrix}
        u_{n}(E,\theta)\\u_{n-1}(E,\theta)
    \end{matrix}\right)=B(E;\theta+n\alpha)\begin{pmatrix}\lambda(E)^n&*\\0&\lambda(E)^{-n}\end{pmatrix} B(E;\theta)^{-1}\left(\begin{matrix}
        u_{0}(E,\theta)\\u_{-1}(E,\theta)
    \end{matrix}\right).
\] 
One can choose  suitable initial data $ \left(\begin{matrix}
    u_{0}(E,\theta)\\u_{-1}(E,\theta)
\end{matrix}\right) $ such that $$ B(E;\theta)^{-1}\left(\begin{matrix}
    u_{0}(E,\theta)\\u_{-1}(E,\theta)
\end{matrix}\right)=\begin{pmatrix}
    1\\0
\end{pmatrix} ,$$ and thus 
\[
    \left(\begin{matrix}
               u_{n}(E,\theta)\\       u_{n-1}(E,\theta)
    \end{matrix}\right)=\begin{pmatrix}
        \lambda(E)^n b_{11}(E;\theta+n\alpha)\\\lambda(E)^n b_{21}(E;\theta+n\alpha)
    \end{pmatrix}.
\] 
It follows that
\[
           u_{n}(E,\theta)=\lambda(E)^n b_{11}(E;\theta+n \alpha)=\lambda(E)^{n+1}b_{21}(E;\theta+(n+1)\alpha)
\] 
is a solution of $ H_{v,\alpha,\theta}u^E=E u^E $. Notice that for any $ E\in(E_k^-,E_k^+) $, $  u_{n}(E,\theta)$ is square-summable at $ -\infty $, thus 
$$ 
\{\lambda(E)^n b_{11}(E;\theta+n \alpha) \}_{n\in\Z}= \{u_{n}(E,\theta)\}_{n\in\Z}= \underline{u}(E,\theta)
$$
 is  the Weyl solution  at $ -\infty $.  Note  the Weyl solution $ \underline{u}(E_k^\pm,\theta) = \lim_{E \to E_k^\pm } \underline{u}(E,\theta)$ (see \cite[Sec 2.2]{Tes1999Jacobi} for details), then by  the continuity with respect to $E$ (Proposition \ref{conDep}), 
$$  \{b_{11}(E_k^\pm;\theta+n \alpha)\}_{n\in\Z}=  \underline{u}(E_k^\pm,\theta) .$$

    If we are at a collapsed gap, we do not need to do anything, so we assume $ \mu(E_k^{\pm})\neq 0 $. To complete the proof, it suffices to show for every gap edge, that it cannot be an accumulation point of discrete eigenvalues. Without loss of generality, the gap edge in question is $ E_k^- $ and we have $ [E_k^-, E_k^-+\delta] \subset [E_k^-, E_k^+]$ with $\delta>0$ chosen sufficiently small. A direct computation shows that
\begin{align*}
W(\underline{u}(E_k^-,\theta),\underline{u}(E_k^-+\delta,\theta))(n) & = \lambda(E_k^-+\delta)^n b_{11}(E_k^-+\delta;\theta+n \alpha) b_{11}(E_k^-;\theta+(n+1) \alpha) \\
    & \quad -\lambda(E_k^-+\delta)^{n+1} b_{11}(E_k^-+\delta;\theta+(n+1) \alpha)b_{11}(E_k^-;\theta+n \alpha).
\end{align*}


 Note that the resolvent set of $H_{v,\alpha,\theta}$ is independent of $\theta$, thus by Theorem \ref{Teschl4}, for all $ \theta\in\T^d $,  we have 
\[
    \sharp   W(\underline{u}(E_k^-,\theta),\underline{u}(E_k^-+\delta,\theta)) =\dim\mathrm{Ran} P_{(E_k^-,E_k^-+\delta)}(H_{v,\alpha,\theta})=0.
\] 
This especially means that $W(\underline{u}(E_k^-,\theta),\underline{u}(E_k^-+\delta,\theta)) (0) $  is non-vanishing for all $ \theta\in\T^d $.
On the other hand, if we denote 
\begin{eqnarray*}
    w(\theta)&=&W(\underline{u}(E_k^-,\theta),\underline{u}(E_k^-+\delta,\theta)) (0) \\
   & =&b_{11}(E_k^-+\delta;\theta) b_{11}(E_k^-;\theta+ \alpha)-\lambda(E_k^-+\delta) b_{11}(E_k^-+\delta;\theta+\alpha)b_{11}(E_k^-;\theta),
\end{eqnarray*} 
again by Proposition~\ref{conDep}, $  w(\theta) \in C^0(2\T^d,\R)$, and thus by continuity we have
\[
   w(\theta)>c'>0  \text{ or }  -  w(\theta) >c'>0.
\] 

By Corollary \ref{closesolution}, for any $\epsilon>0$ small enough,  there exist $ \tilde{u}^{\pm}({E_{k}^{-}},\theta) $ such that 
\begin{equation}\label{es1}
\limsup_{n\to \pm\infty}\|\tilde{u}_n^{\pm}({E_{k}^{-}},\theta) - b_{11}(E_k^-;\theta+n\alpha)\|<\epsilon,
\end{equation}
Meanwhile, while $\underline{u}(E,\theta) $ is  a solution of $ H_{v,\alpha,\theta}u^E=E u^E $,  we can obtain an approximate solution of  $ \widetilde{H}_{v,\alpha,\theta}$  with the help of the following lemma, which can seen as a counterpart of Lemma \ref{lem5.6}:

    \begin{lemma}\label{lem6.1}
 Let $|\lambda|>1$ and suppose that 
        \begin{equation} \label{6.3}
            \vec{\varphi} (n+1)=\left(A+R(n)\right)\vec{\varphi}(n),
        \end{equation}  
        where
        \[
            A=\begin{pmatrix}
                \lambda &c\\
                0&\lambda^{-1}
            \end{pmatrix}, \qquad \sum |R(n)|<\infty.
        \]
       Then for any $ \epsilon>0 $, we can find a solution $ \vec{\phi}_{\pm}(n) $ such that 
       \[
        \limsup_{n\to\pm\infty}|\lambda|^{-n} \Big\|\vec{\phi}_{\pm}(n)-\begin{pmatrix}
            \lambda^n\\0
        \end{pmatrix} \Big\| \leq \epsilon.
       \]  
    \end{lemma}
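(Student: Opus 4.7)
The plan is to adapt the Levinson-style iteration from Lemma~\ref{lem5.6} to the hyperbolic setting; the crucial gain is that the spectral gap $|\lambda|>|\lambda^{-1}|$ provides uniform exponential decay along one of the two characteristic directions, which will allow me to relax the weighted hypothesis $\sum|R(n)||n|<\infty$ of Lemma~\ref{lem5.6} to just $\sum|R(n)|<\infty$.

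First I would compute the spectral projections of $A$ explicitly. Since $A\vec{e}_1=\lambda\vec{e}_1$ and $A\vec{v}=\lambda^{-1}\vec{v}$ with $\vec{v}=(c/(\lambda^{-1}-\lambda),1)^{\top}$, one has
\[
P_+=\begin{pmatrix} 1 & c/(\lambda-\lambda^{-1})\\ 0 & 0\end{pmatrix},\qquad P_-=I-P_+,
\]
so that $A^m=\lambda^m P_++\lambda^{-m}P_-$ for all $m\in\Z$. Next I would rescale by the dominant eigenvalue via $\vec{\psi}(n)=\lambda^{-n}\vec{\varphi}(n)$; then \eqref{6.3} becomes $\vec{\psi}(n+1)=\tilde{A}\vec{\psi}(n)+r(n)\vec{\psi}(n)$, where $\tilde{A}=\lambda^{-1}A$ has eigenvalues $1$ and $\lambda^{-2}$ (with the same eigenvectors as $A$) and $r(n)=\lambda^{-1}R(n)\in\ell^1$. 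The target thus reduces to producing a solution with $\vec{\psi}(n)\to\vec{e}_1$ as $n\to+\infty$.

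For the right half-line, I would choose $n_0$ large enough that $K\sum_{s\geq n_0}|R(s)|<\tfrac{1}{2}$, where $K$ absorbs $\|P_\pm\|$ and $|\lambda|^{-1}$, and iterate
\[
\vec{\psi}_{i+1}(n)=\vec{e}_1+\sum_{s=n_0}^{n-1}\lambda^{-2(n-s-1)}P_-\,r(s)\vec{\psi}_i(s)-\sum_{s=n}^{\infty}P_+\,r(s)\vec{\psi}_i(s),
\]
starting from $\vec{\psi}_0(n)=\vec{e}_1$. Because $\lambda^{-2(n-s-1)}\leq 1$ for $s\leq n-1$ and $P_+$ is bounded, each step of the iteration changes $\vec{\psi}_i$ by at most $K\bigl(\sum_{s\geq n_0}|R(s)|\bigr)\|\vec{\psi}_i-\vec{\psi}_{i-1}\|_\infty$ in the supremum norm, so the iteration contracts geometrically. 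A telescoping check using $\tilde{A}P_+=P_+$ and $\tilde{A}P_-=\lambda^{-2}P_-$ confirms that the limit $\vec{\psi}$ solves the rescaled difference equation; the convergence $\vec{\psi}(n)\to\vec{e}_1$ then follows from the $\ell^1$ tail in the second sum and, for the first sum, from splitting $s\leq n/2$ (where $\lambda^{-2(n-s-1)}$ decays exponentially in $n$) versus $s>n/2$ (where the $\ell^1$ tail takes over). Setting $\vec{\phi}_+(n):=\lambda^n\vec{\psi}(n)$ then gives $\limsup_{n\to+\infty}|\lambda|^{-n}\|\vec{\phi}_+(n)-\lambda^n\vec{e}_1\|=0\leq\epsilon$. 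The construction of $\vec{\phi}_-$ at $-\infty$ is entirely analogous after time reversal: letting $\vec{\eta}(m)=\vec{\varphi}(-m)$ yields $\vec{\eta}(m+1)=(A^{-1}+\tilde{R}(m))\vec{\eta}(m)$ with $\tilde{R}\in\ell^1$ (since $R\to 0$, the matrices $A+R(-m-1)$ are invertible for $m$ large, and $\|\tilde{R}(m)\|\leq C|R(-m-1)|$), and $A^{-1}$ has the same eigenvectors as $A$ with reciprocal eigenvalues, so the same scheme applies with the roles of $P_+$ and $P_-$ swapped.

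The main obstacle is purely organizational: writing down the correct integral equation so that both sums in the Levinson ansatz contract under the bare hypothesis $\sum|R(n)|<\infty$. Once this spectral decomposition is in place the estimates are routine, and the argument in fact yields $\limsup=0$, which is stronger than required. This reflects the robustness of hyperbolic splittings compared to the more delicate parabolic situation handled in Lemma~\ref{lem5.6}, where the linear growth of $A^m$ forced the use of the weighted summability of $R$.
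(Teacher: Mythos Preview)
Your proposal is correct and follows essentially the same Levinson-type iteration as the paper: the paper's decomposition $\Phi(n)=\Phi_1(n)+\Phi_2(n)$ with $\Phi_1(n)=\lambda^{-n}P_-$ and $\Phi_2(n)=\lambda^{n}P_+$ is exactly your spectral splitting, and the two iteration schemes coincide after your rescaling $\vec{\psi}(n)=\lambda^{-n}\vec{\varphi}(n)$. Your additional observation that the $\limsup$ is in fact $0$ (via the $s\le n/2$ versus $s>n/2$ split) is a mild sharpening of the paper's final estimate, which stops at the uniform bound $2K\sum_{s\ge n_0}|R(s)|\le\epsilon$.
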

    \begin{proof}
       Denote by $ \Phi(n) $ the  fundamental matrix of $ \vec{\varphi}(n+1)=A\vec{\varphi}(n) $ and decompose
        \[
            \begin{aligned}
                \Phi(n)&=\begin{pmatrix}
                    \lambda^n&c\frac{\lambda^n-\lambda^{-n}}{\lambda-\lambda^{-1}}\\
                    0&\lambda^{-n}
                \end{pmatrix}=\begin{pmatrix}
                    0&-c\frac{\lambda^{-n}}{\lambda-\lambda^{-1}}\\
                    0&\lambda^{-n}
                \end{pmatrix}+\begin{pmatrix}
                    \lambda^n&c\frac{\lambda^n}{\lambda-\lambda^{-1}}\\
                    0&0
                \end{pmatrix}:=\Phi_1(n)+\Phi_2(n).
            \end{aligned}
        \] 
        It follows that
        \[
            \begin{aligned}
                \|\Phi_1(n)\|&\leq K |\lambda|^{-n},\  n\geq 0,\\
            \|\Phi_2(n)\|&\leq K |\lambda|^{n},\  n\leq 0.
            \end{aligned}
        \] 
        Let $ \vec{\psi}_0(n)=\Phi(n)\begin{pmatrix}
            1\\0
        \end{pmatrix} = \begin{pmatrix}
           \lambda^n\\0
        \end{pmatrix}  $, $ \ n\geq 1 $. One has
        \[
            \|\vec{\psi}_0(n)\|\leq |\lambda|^n.
        \]

      Choose  $ n_0 $ which is large  enough such that
        \[
            2K\sum_{s=n_0}^{\infty}|R(s)|<\epsilon,
        \] 
and define the sequence
        \[
            \vec{\psi}_{i+1}(n)=\begin{pmatrix}\lambda^n\\0\end{pmatrix} +\sum_{s=n_0+1}^n\Phi_1(n-s)R(s-1)\vec{\psi}_{i}(s-1)-\sum_{s=n+1}^\infty\Phi_2(n-s)R(s-1)\vec{\psi}_{i}(s-1).
        \] 
A direct computation shows that
        \[
            \begin{aligned}
                \|\vec{\psi}_{i+1}(n)-\vec{\psi}_{i}(n)\|&\leq \frac{|\lambda|^n}{2^{i+1}},
            \end{aligned}
        \]
        which means that there exists a limit function $ \vec{\psi}(n) $ on  $ n\geq n_0 $ that satisfies
        \[
            \|\vec{\psi}(n)\|\leq 2 |\lambda|^n,
        \] 
        \begin{equation}\label{iter2}
            \vec{\psi}(n)=\begin{pmatrix}\lambda^n\\0\end{pmatrix}+\sum_{s=n_0+1}^n\Phi_1(n-s)R(s-1)\vec{\psi}(s-1)-\sum_{s=n+1}^\infty\Phi_2(n-s)R(s-1)\vec{\psi}(s-1).
        \end{equation}
        It is easy to verify that  $ \vec{\psi}(n)$ is a solution of \eqref{6.3}. Next we give an estimate for $ \vec{\psi}(n)$. 
       
  By \eqref{iter2}, we have
        \[
            \begin{aligned}
                |\lambda|^{-n} \Big\| \vec{\psi}(n)-\begin{pmatrix}\lambda^n\\0\end{pmatrix} \Big\| & \leq 2K\sum_{s=n_0}^{n-1}|\lambda|^{-(n-s)}|R(s)||\lambda|^{s-n}+2K\sum_{s=n}^\infty |\lambda|^{n-s}|R(s)||\lambda|^{s-n}\\
                & \leq 2K\sum_{s=n_0}^{\infty}|R(s)|\leq \epsilon.
            \end{aligned}
        \] 
        The proof of the statement on the left half line (resp. $n\leq 0$) is similar.
    \end{proof}
    
 Once we have this, similarly as in Corollary~\ref{closesolution}, there exist $ \tilde{u}^{\pm}(E_{k}^{-}+\delta,\theta) $ such that 
\begin{equation}\label{es2}
\limsup_{n\to \pm\infty}\|\lambda(E_k^-+\delta)^{-n}\tilde{u}_n^{\pm}  (E_{k}^{-}+\delta,\theta) -  b_{11}(E_k^-+\delta;\theta+n\alpha)\|<\epsilon.
\end{equation}
By \eqref{es1} and \eqref{es2}, we have
\[
    \limsup_{n\to\pm\infty}\|\lambda(E_k^-+\delta)^{-n}W(\tilde{u}^{\pm}(E_{k}^{-},\theta),\tilde{u}^{\pm}(E_{k}^{-}+\delta,\theta))(n)-w(\theta+n\alpha)\|<\epsilon.
\]
Combining Theorem~\ref{Teschl2} and Theorem~\ref{Teschl3}, it follows that
\[
    \begin{aligned}
    \sharp W(\tilde{u}^{\pm}(E_{k}^{-},\theta),\tilde{u}^{\pm}(E_{k}^{-}+\delta,\theta))<\infty.
    \end{aligned}
\] 
This is equivalent to $ \dim \mathrm{Ran}P_{(E_k^-,E_k^-+\delta)}(H)<\infty $, which completes the proof. \qed

\subsection{Proof of Theorem \ref{mainmain} (3):}

In the unbounded gap $G_0(v)$ case, without loss of generality, assume $E_0^{-}\in\Sigma^{sub}$. Then, $(\alpha, S_{E_0^-}^v)$ is almost reducible by Theorem~\ref{ART}, consequently, it  is reducible to $ \begin{pmatrix}
\pm 1&c\\0&\pm 1 \end{pmatrix} $ \cite{AJ2010Almost,eliasson}. However, it is not known whether $ H_{v,\alpha,\theta}=Eu $ has a uniformly positive quasi-periodic solution. The proof we adopt will be different from Section \ref{unbounded}, and the same as in the bounded gap case (Section \ref{bounded}). 

Now we look at the bounded gap case,  consider $ [E_k^-,E_k^+] $ with $ E_k^- $ or $ E_k^+\in\Sigma^{sub} $. Without loss of generality, assume it is $ E_k^- $. There exist $ \bar{h}>0 $, $ \Phi_{E_k^-}\in C^\omega(2\T,\mathrm{SL}(2,\R)) $
 such that for any $ E\in [E_k^-,E_k^-+\delta] $, $ \delta $ sufficiently small:
$$
        \Phi_{E_k^-}(\theta+\alpha)^{-1}S_E^v(\theta)\Phi_{E_k^-}(\theta)=R_{E_k^-}e^{f(E,\theta)}
$$ 
with $ \|f(E,\theta)\|_{\bar{h}} $ small enough. Obviously, $ R_{E_k^-}e^{f(E,\theta)}\in C^0[E_k^-,E_k^-+\delta] $. 
Notice that $ 2\rho(\alpha,R_{E_k^-}e^{f(E,\theta)})=\langle k+\deg\Phi_{E_k^-},\alpha\rangle $.  Then Proposition \ref{conDep} applies, there exists $ \bar{B}(E,\cdot)\in C^0([E_k^-,E_k^-+\delta]\times 2\T,\mathrm{SL}(2,\R)) $, $ C(E) \in C^0([E_k^-,E_k^-+\delta], \mathrm{SL}(2,\R))$ such that 
\[
    \bar{B}(\theta+\alpha)^{-1}\Phi_{E_k^-}(\theta+\alpha)^{-1}S_E^v(\theta)\Phi_{E_k^-}(\theta)\bar{B}(\theta)=C(E),
\]  
with $ \rho(\alpha,C(E))=0 $. The rest proof is the same as in Section~\ref{bounded}, we omit the details.

\section{Completion of the Proof of Theorem \ref{mainmain}}\label{sec.7}

In this section we prove parts  $(1)$ and $(4)$ of Theorem \ref{mainmain}.  By Avila's global theory  \cite{avila2015global}, for typical $ v\in C^\omega(\T,\R) $, we have $ L(\alpha,E)>c_0>0 $ for all $ E $ in any compact interval of $ \R\backslash\Sigma^{sub} $, thus we get part $ (4) $ of Theorem \ref{mainmain} by applying Theorem \ref{mainresult}. By Weyl's criterion, $ \sigma_\mathrm{ess}(\widetilde{H}_{\lambda,\alpha,\theta})=\sigma_\mathrm{ess}(H_{\lambda,\alpha,\theta}) $. As we have shown that there is no singular continuous spectrum in $ \Sigma=\sigma_\mathrm{ess}(H_{\lambda,\alpha,\theta}) $, this completes the proof of $(1)$ of Theorem \ref{mainmain}.

\section*{Acknowledgements} 
D.Damanik was supported in part by NSF grants DMS--1700131 and DMS--2054752, an Alexander von Humboldt Foundation research award, and Simons Fellowship $\# 669836$. J.You and Q. Zhou were  partially supported by National Key R\&D Program of China (2020 YFA0713300) and Nankai Zhide Foundation.  J. You was also partially supported by NSFC grant (11871286). Q. Zhou was supported by NSFC grant (12071232), the Science Fund for Distinguished Young Scholars of Tianjin (No. 19JCJQJC61300).

\begin{appendix}

\section{Eigenvalues at Gap Edges}

We discuss the occurrence of eigenvalues at gap edges for decaying potentials. This observation, which we learned from Milivoje Lukic, is possibly well known. In any event, we spell it out explicitly here for the convenience of the reader.

A common trick to force certain eigenfunction behavior is to choose an appropriate proposed eigenfunction along with an energy, and to then deduce the (asymptotic) form of the potential from it.

Let us choose the eigenfunction $u$, which obeys $u(n) = \frac{1}{n}$ for $|n| \ge n_0$ and is chosen for $|n| < n_0$ so that it obeys any desired boundary condition at the origin in the half-line case or in some arbitrary way in the whole-line case. The energy is $E = 2$. Since we need to satisfy
$$
u(n+1) + u(n-1) + V(n) u(n) = 2 u(n),
$$
it follows that for $|n| > n_0$, we must have
$$
V(n) = 2 - \frac{u(n+1) + u(n-1)}{u(n)} = - \frac{2}{n^2 - 1}.
$$

This shows the following:

\begin{proposition}
There exists a potential decaying at the rate $n^{-2}$ for which $2$ is an eigenvalue. In particular, an $\ell^1$ potential does not necessarily ensure the purity of the absolutely continuous spectrum on $[-2,2]$.
\end{proposition}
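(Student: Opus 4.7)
The plan is to reverse-engineer the example: fix the energy at the would-be gap edge $E=2$, pick a concrete $\ell^2$ candidate eigenfunction, and then define the potential so that the eigenvalue equation holds identically. Concretely, I would fix an integer $n_0 \ge 2$ (to avoid the singularity of $1/n$ at $n=0$ and of the forthcoming formula at $n=\pm 1$), set $u(n)=1/n$ for $|n|\ge n_0$, and extend $u$ to $|n|<n_0$ in any way that keeps $u(n)\neq 0$ on that finite set. Then define
\[
V(n) \;=\; 2 \;-\; \frac{u(n+1)+u(n-1)}{u(n)}, \qquad n\in\Z,
\]
so that by construction $(\Delta u)(n) + V(n)u(n) = 2u(n)$ holds on all of $\Z$.

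Next I would verify the asymptotic form of $V$. For $|n|>n_0$, a direct computation gives
\[
V(n) \;=\; 2 - n\Big(\tfrac{1}{n+1}+\tfrac{1}{n-1}\Big) \;=\; 2 - \frac{2n^2}{n^2-1} \;=\; -\frac{2}{n^2-1},
\]
so $V(n)=O(n^{-2})$; in fact $n^2 V(n)\to -2$, which places $V\in P(2)$ on the power scale. Moreover $V\in\ell^1(\Z)$ since $\sum_{|n|\ge n_0}(n^2-1)^{-1}<\infty$, and the finitely many values of $V$ for $|n|<n_0$ contribute nothing to either decay or summability.

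The final step is to check that $E=2$ really is an eigenvalue and that this destroys purity on $[-2,2]$. Square-summability $u\in\ell^2(\Z)$ follows from $\sum_{|n|\ge n_0} n^{-2}<\infty$, so $u$ is a genuine $\ell^2$ eigenfunction of $H=\Delta+V$ at $E=2$. By the Birman--Rosenblum--Kato theory, $V\in\ell^1$ ensures $\sigma_{\mathrm{ac}}(H)\supseteq\sigma_{\mathrm{ac}}(\Delta)=[-2,2]$, and Weyl's criterion yields $\sigma_{\mathrm{ess}}(H)=[-2,2]$. Since $2\in[-2,2]$ is a point eigenvalue, the spectral measure of $H$ restricted to $[-2,2]$ carries a nontrivial pure point component, hence it is not purely absolutely continuous, which is exactly the assertion of the proposition.

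The only subtlety is the freedom on the finite window $\{|n|<n_0\}$: one must pick the extension of $u$ so that the quotient defining $V(n)$ never has a zero denominator and so that $V(n)$ remains real and finite there. In the whole-line setting this is trivially arrangeable (e.g.\ $u(n)=1/n_0$ constantly on $\{|n|<n_0\}$ works), and on the half-line one would instead prescribe $u(0)$ together with the boundary condition at the origin and let $V$ be dictated on $\{1,\ldots,n_0-1\}$ by the same formula. Either way, the asymptotic decay $V(n)\sim -2/n^2$ is untouched, so the main conclusion is unaffected; there is no analytic obstacle beyond this finite bookkeeping.
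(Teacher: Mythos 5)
Your proposal is correct and follows essentially the same route as the paper's appendix: fix $E=2$, prescribe $u(n)=1/n$ for $|n|\ge n_0$, and read off $V(n)=-2/(n^2-1)$ from the eigenvalue equation. The additional checks you supply (square-summability of $u$, $\ell^1$ membership and $P(2)$ decay of $V$, and the non-vanishing of $u$ on the finite window) are exactly the routine verifications the paper leaves implicit.
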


\end{appendix}
\bibliographystyle{spmpsci}
\bibliography{bourgainALdecay_referrence}
\end{document}